\newcommand{\N}{\mathbb{N}}
\newcommand{\R}{\mathbb{R}}
\newcommand{\vect}[1]{\mathbf{#1}}
\newcommand{\x}{\mathbf{x}}
\newcommand{\y}{\mathbf{y}}
\newcommand{\z}{\mathbf{z}}
\renewcommand{\u}{\mathbf{u}}
\newcommand{\bb}{\mathbf{b}}
\newcommand{\cc}{\mathbf{c}}
\newcommand{\vvv}{\mathbf{v}}
\renewcommand{\v}{\mathbf{v}} %
\newcommand{\B}{\mathbf{B}}
\newcommand{\C}{\mathbf{C}}
\newcommand{\U}{\mathbf{U}}
\newcommand{\Z}{\mathbf{Z}}
\newcommand{\I}{\mathbf{I}}
\renewcommand{\P}{\mathbf{P}}
\newcommand{\V}{\mathbf{V}}
\newcommand{\E}{\mathbb{E}}
\renewcommand{\Pr}{\mathbb{P}}
\newcommand{\ih}{\hat{i}}
\newcommand{\upp}{\mathcal{O}}
\newcommand{\low}{\Omega} 
\newcommand{\algo}{\mathsf{A}}
\newcommand{\oracle}{\mathsf{O}}
\newcommand{\ortho}{\mathsf{Ortho}}
\DeclarePairedDelimiter\abs{\lvert}{\rvert}
\DeclarePairedDelimiter\norm{\lVert}{\rVert}
\newcommand{\inner}[2]{ \langle {#1, #2} \rangle} % inner product
\renewcommand{\epsilon}{\ensuremath\varepsilon}
\renewcommand{\phi}{\ensuremath{\varphi}}
\newcommand{\curir}{\x_t^s} % current iterate
\newcommand{\estim}{\v_t^s} % gradient estimator
\newcommand{\hestim}{\U_t^s} % hessian estimator
\newcommand{\snap}{\widehat{\x}^s} % snapshot
\newcommand{\snapgrad}{\mathbf{g}^s}
\newcommand{\snaphess}{\mathbf{H}^s}
\newcommand{\step}{\mathbf{h}_t^s}
\newcommand{\grdf}{\nabla F} % gradient of f
\newcommand{\hesf}{\nabla^2 F} % hessian of f
\newcommand{\grdfc}{\grdf(\curir)} % gradient of f at current point
\newcommand{\hesfc}{\hesf(\curir)} % hessian of f at current point
\newcommand{\grdfi}{\nabla f_{i_t}} % gradient of f_i_t
\newcommand{\hesfi}{\nabla^2 f_{i_t}} % hessian of f_i_t
\newcommand{\grdfj}{\nabla f_{j_t}} % gradient of f_j_t
\newcommand{\hesfj}{\nabla^2 f_{j_t}} % hessian of f_j_t
\newcommand{\xdiff}{\curir - \snap} % difference of current iter and snapshot
\def\thmheadbrackets#1#2#3{%
	\thmname{#1}\thmnumber{\@ifnotempty{#1}{ }\@upn{#2}}%
	\thmnote{ {\the\thm@notefont[#3]}}}
\newtheoremstyle{brakets}% Name
{}% space above
{}% space below
{\itshape}% body font
{}% indent
{\bfseries}% head font
{.}% punctuation after head
{ }% space after head (has to be space or dimension!)
{\thmheadbrackets{#1}{#2}{#3}}% head spec
\theoremstyle{brakets}
\newtheorem{theorem}{Theorem}[section]
\newtheorem{definition}[theorem]{Definition}
\newtheorem{lemma}[theorem]{Lemma}
\newtheorem{assumption}[theorem]{Assumption}
\renewenvironment{proof}[1]{{\bf Proof }[#1]{\bf.}}{\hfill $\Box$ \\}
\renewcommand{\cite}{\citep}
\title{On the Oracle Complexity of Higher-Order Smooth Non-Convex Finite-Sum Optimization}
\author{Nicolas Emmenegger\thanks{Corresponding author} \and Rasmus Kyng \and  Ahad N. Zehmakan}
\date{\texttt{nicolaem@ethz.ch  \{kyng,abdolahad.noori\}@inf.ethz.ch} }
\begin{document}

\maketitle

\begin{abstract}
	\noindent
We prove lower bounds for higher-order methods in smooth non-convex finite-sum optimization. Our contribution is threefold: We first show that a deterministic algorithm cannot profit from the finite-sum structure of the objective, and that simulating a $p$th-order regularized method on the whole function by constructing exact gradient information is optimal up to constant factors. We further show lower bounds for randomized algorithms and compare them with the best known upper bounds. To address some gaps between the bounds, we propose a new second-order smoothness assumption that can be seen as an analogue of the first-order mean-squared smoothness assumption. We prove that it is sufficient to ensure state-of-the-art convergence guarantees, while allowing for a sharper lower bound.
\end{abstract}
\section{Introduction}

Many problems in machine learning can be formulated as empirical risk minimization, viewing the loss of each data point as a component in a sum. This yields an objective function $F : \R ^ d \rightarrow \R$, $F(\x) = \frac{1}{n}\sum_{i=1}^n f_i(\x)$ that one minimizes under a variety of smoothness assumptions. The ultimate goal would be to find
$$
\x^* = \arg \min_{\x \in \R^d} F(\x).
$$

Since finding such a global minimum is in general NP-Complete \cite{murty:nphardness}, theoretical guarantees are expressed in terms of weaker requirements. Inspired by necessary conditions for minima, customary guarantees are approximate first-order or second-order stationary points (FOSP, SOSP). We will focus here on the oracle complexity of finding an $\varepsilon$-approximate first-order stationary point of $F$, that is a point $\x$, such that $\norm{\nabla F(\x)} \leq \epsilon$, which is standard for lower bounds in non-convex optimization \cite{carmon:lower:i, carmon:lower:ii, carmon:lower:stoc, fang:spider, gu:lower}.

 When data sets are large, gradients are often approximated by evaluating only a subset of all training examples \cite{bottou:review}. This leads to a model where in each iteration of an algorithm, one component function $f_i$'s derivative information can be queried. In this model, the most prevalent algorithms today are stochastic gradient descent (SGD) and variants thereof. However, more query efficient algorithms have been explored. Variance reduction techniques -- first introduced in convex optimization \cite{johnsonzhang:svrg_convex} -- have been successfully applied in the non-convex setting: see e.g. \citet{allen:nc_vr}, \citet{reddi:nc_vr} or \citet{lei:nc_vr} for early works. These algorithms draw their speedup from cleverly constructed low-variance gradient estimators.

The best known rate for gradient-based algorithms has first been achieved by the SPIDER algorithm developed by \citet{fang:spider}. Under the assumption that the component functions are mean-squared smooth, their algorithm finds a FOSP in $\upp(\sqrt{n}\epsilon^{-2})$ first-order oracle calls.
%footnote{We would like to point out that some of the complexities here -- including %the one for SPIDER -- hide an additional term linear in $n$. This is to be %consistent throughout all cited papers, as some of them make the sensible %assumption that $\epsilon$ is small enough to justify dropping the term.}. 
Subsequent work has not improved on this convergence rate, but tried to improve practicality, see e.g.~\citet{wang:spiderboost}.

\subsection{Higher-order variance-reduced methods}
\label{sec:higherorder_algo}
Motivated by the fact that higher-order algorithms can give guarantees in terms of SOSPs and typically enjoy better convergence rates in a non-finite-sum, noiseless setting \cite{nesterov:cr}, there have been successful attempts to apply variance reduction techniques to higher-order algorithms.

While there exist approaches exploiting third-order derivatives \cite{lucchi:tensor}, most work has focused on gradient and Hessian based algorithms. The first to use inexact Hessian information while retaining global convergence in the non-convex finite-sum setting are \citet{kohler:subsampled}, by using a sub-sampled Hessian approximation scheme. However, with decreasing step-sizes, their sample sizes may approach $n$.

Subsequent work has improved the dependence on $n$: \citet{zhou:svrc} give a method (SVRC) that uses only $\tilde{\upp}(n^{4/5}\epsilon^{-3/2})$ \footnote{We use $\tilde{\upp}$ to hide polylogarithmic factors in $d$, $n$ and $1/\epsilon$} second-order oracle queries to find a SOSP under a second-order smoothness assumption on each of the $f_i$'s. This method relies on semi-stochastic gradient and Hessian estimators inspired by first-order variance-reduction techniques. \citet{shen:trust_region} provide an even faster trust-region method (STR2) that achieves the second-order oracle complexity of $\tilde{\upp}(n^{3/4}\varepsilon^{-3/2})$, but under the stronger assumption that the gradient is Lipschitz continuous as well (i.e. first and second-order smoothness). 
  
There is a line of research which tries to minimize Hessian complexity at the cost of additional gradient queries: \citet{shen:trust_region} also give the Algorithm STR1 that finds a SOSP in $\tilde{\upp}(\min(\sqrt{n}\epsilon^{-2} , {n}\epsilon^{-3/2}))$ gradient accesses and $\tilde{\upp}(\min(\sqrt{n}\epsilon^{-3/2}, \epsilon^{-2}))$ Hessian accesses. \citet{zhou:newest_stochastic} provide a method that solves the same problem with $\tilde{\upp}(\min(\sqrt{n}\epsilon^{-2} , {n}\epsilon^{-3/2}, \epsilon^{-3}))$ gradient accesses and $\tilde{\upp}(\min(\sqrt{n}\epsilon^{-3/2}, \epsilon^{-2}))$ Hessian queries.
  
For the higher-order oracle complexity measure that we will focus on here, SVRC and STR2 represent the best known upper bounds for second-order randomized algorithms. As we only assume $p$th-order smoothness, we will take SVRC \cite{zhou:svrc} as reference for second-order methods.

\renewcommand{\TPTminimum}{\linewidth}
\newcommand{\mytnote}[1]{\tnote{\textnormal{#1}}}
\begin{table}[t]
\label{tab:individual}
\caption{A comprehensive overview of the upper and lower bounds for incremental first-, second- and higher-order oracle models. $p$ refers to the degree of smoothness of the function(s), and $n$ to the number of components in the finite-sum structure. These bounds assume that each function $f_i$ is $p$th-order smooth, i.e. has Lipschitz $p$th-order derivative tensor. The first row refers to deterministic algorithms while the three below concern the randomized setting. Our contributions are highlighted in grey.}
\begin{center}
\begin{threeparttable}
\vskip -0.06in
\begin{small}
\begin{sc}
\begin{tabular}{SlSlScSc}
\Xhline{2\arrayrulewidth}
& &Upper bound & Lower bound  \\
\hline
Deterministic &
 & $\upp(n\epsilon^{-\frac{p+1}{p}})$ \mytnote{a}
 & \cellcolor[HTML]{C0C0C0} $\low(n \epsilon^{-\frac{p+1}{p}})$ \mytnote{b} \\
 \hline
\multirow{5}{*}{Randomized} &
 $p=1$ 
 &$\upp(n^{\frac{1}{2}}\epsilon^{-2})$ \mytnote{c}
 & $\low(\epsilon^{-2})$ \mytnote{d;f} \\
  \cline{2-4}
 &$p = 2$
 & $\tilde{\upp}(n^{\frac{4}{5}}\epsilon^{-\frac{3}{2}})$ \mytnote{e}
 & \cellcolor[HTML]{C0C0C0} $\low(n^{\frac{1}{4}}\epsilon^{-\frac{3}{2}})$ \mytnote{f} \\
 \cline{2-4}
 &$p > 2$
 & $\upp(n\epsilon^{-\frac{p+1}{p}})$ \mytnote{a}
 & \cellcolor[HTML]{C0C0C0} $\low(n^{\frac{p-1}{2p}}\epsilon^{-\frac{p+1}{p}})$ \mytnote{f} \\
\Xhline{2\arrayrulewidth}
\end{tabular}
\end{sc}
\end{small}
\end{threeparttable}
\end{center}
\par\medskip
\vskip 0.05in
\footnoterule
\begin{tablenotes}
{\footnotesize
\item [a] \citet{birgin:regularized}
\item [b] Theorem~\ref{theorem:deterministic}
\item [c] \citet{fang:spider}
\item [d] \citet{gu:lower}
\item [e] \citet{zhou:svrc}
\item [f] Theorem \ref{theorem:individual_smoothness}}
\end{tablenotes}
\vskip -0.1in
\end{table}

\subsection{Related work on lower bounds}
Lower bounds for smooth non-convex optimization have all built on the works of \citet{carmon:lower:i,carmon:lower:ii}. These papers focus on the case where the objective is composed of a single smooth function (i.e., $n=1$) and full gradient information is available at each iteration. In the first paper they establish the optimal rate of $\Theta(\varepsilon^{-(p+1)/p})$ to find $\epsilon$-approximate FOSPs for algorithms having access to as much derivative information as needed under the assumption that the function is $p$th-order smooth. In the companion paper, they provide lower bounds for first-order algorithms.

\renewcommand{\TPTminimum}{\linewidth}
\begin{table}[t]
\label{tab:alternative}
\caption{Lower and upper bounds for randomized algorithms under alternative smoothness assumptions. For the case $p=1$, we assume mean-squared smoothness and for the case $p=2$ we assume our new third-moment smoothness assumption (Assumption \ref{assumption:third}). Our contributions are again highlighted in grey.}
\begin{center}
\begin{threeparttable}
\vskip -0.06in
\begin{small}
\begin{sc}
\begin{tabular}{SlScSc}
\Xhline{2\arrayrulewidth}
&Upper bound & Lower bound  \\
\hline
 $p = 1$
 & $\upp(n^{\frac{1}{2}}\epsilon^{-2})$ \mytnote{a}
 & $\low(n^{\frac{1}{2}}\epsilon^{-2})$ \mytnote{b} \\
 \hline
 $p = 2$
 & \cellcolor[HTML]{C0C0C0} $\tilde{\upp}(n^{\frac{4}{5}}\epsilon^{-\frac{3}{2}})$ \mytnote{c}
 & \cellcolor[HTML]{C0C0C0} $\low(n^{\frac{5}{12}}\epsilon^{-\frac{3}{2}})$ \mytnote{d} \\
\Xhline{2\arrayrulewidth}
\end{tabular}
\end{sc}
\end{small}
\end{threeparttable}
\end{center}
\par\medskip
\vskip 0.05in
\footnoterule
\begin{tablenotes}
{\footnotesize
\item [a] \citet{fang:spider}
\item [b] \citet{fang:spider, gu:lower}
\item [c] Theorem~\ref{theorem:third_moment:upper}
\item [d] Theorem~\ref{theorem:third_moment:lower}}
\end{tablenotes}
\vskip -0.1in
\end{table}

In the same paper where \citet{fang:spider} introduce the first-order algorithm SPIDER with $\upp(n^{1/2}\epsilon^{-2})$ gradient oracle complexity, they also show their algorithm to be optimal, up to constant factors, for the mean-squared smooth finite-sum setting.

Furthermore, \citet{gu:lower} prove lower bounds on first-order algorithms for a variety of regimes in finite-sum optimization, including the non-convex case. A shortcoming of their results is that they place a linear-span restriction on the algorithms in question, i.e. the iterates of considered algorithms stay in the span of the queried gradients.

It is also worth noting that \citet{carmon:lower:stoc} and \citet{arjevani:lower:stoc:second} prove lower bounds for a related but different stochastic (online) setting. In this model one does not assume a finite-sum structure, but typically places variance assumptions on the queried gradients. The first paper focuses on first-order stationary points, while the second is considering approximate local minima and higher-order algorithms. We will not further study this setting here.

\subsection{Our contribution}
We give the first lower bound results for the problem of finding an approximate stationary point of a sum of $p$th-order individually smooth non-convex functions, in a model where an algorithm queries the derivatives of individual functions at each time-step. We provide lower bounds for both deterministic and randomized algorithms. An overview is given in Table 1.

First we consider deterministic algorithms and show that a $p$th-order regularized method that constructs the full derivative at each iteration is optimal up to constant factors.
We use an adversarial construction that forces the algorithm to spend a large number of queries to discover useful information. To the best of our knowledge, this result is also new for the widely studied case of first-order smooth non-convex finite-sum optimization and implies that gradient descent on the full function is optimal up to constant factors. The result demonstrates a clear separation between deterministic and randomized algorithms. 

Further, we give the first lower bounds for randomized algorithms in this setting, which allow comparison with a new line of research of higher-order variance reduction. 
We derive the bounds with a probabilistic construction, building on the family of zero-chain functions first introduced by \citet{carmon:lower:i}. In contrast to the first-order case studied by \citet{gu:lower}, we show a non-trivial dependence on $n$ for the $p>1$ regime. 

There is a gap between the best known upper bounds and our lower bound under the individual smoothness assumption.
To alleviate this gap, we introduce a new, weaker notion of second-order smoothness and show that it is sufficient to guarantee state-of-the-art oracle complexities for second-order variance-reduced methods, while allowing for a tighter lower bound. Table~2 shows our bounds and contrasts them with analogous results using mean-squared smoothness in the first-order setting. To upper bound the oracle complexity, we show that the variance of SVRC's \cite{zhou:svrc} Hessian and gradient estimators can be controlled via the second-order mean-cubed smoothness of the finite-sum function. 

All our bounds are tight in terms of $\epsilon$ dependence, but closing the gaps with respect to the dependence on $n$ remains an interesting open problem.

\section{Model and assumptions}
In this section, we will introduce the model we work in for deriving our lower bounds.
\subsection{Problem description}
As mentioned above, we focus on finding $\epsilon$-approximate first-order stationary points. We assume that a problem instance is a function $F = \frac{1}{n}\sum_{i=1}^n f_i$, which satisfies the following assumption.
\begin{assumption}
\label{assumption:individual}
We say $F \in \mathcal{F}_p^n(\Delta, L_p)$ if for some $d$, $F : \R^d \rightarrow \R, \, \x \mapsto \frac{1}{n}\sum_{i=1}^n f_i(\x) $ satisfies the following properties
\begin{enumerate}[i)]
\item Each function $f_i$ is $p$th-order smooth, i.e. it is $p$ times continuously differentiable, and for all $\x,\y$ \footnote{$\norm{\cdot}$ always refers to the tensor operator norm, e.g. to the euclidean norm for vectors and the spectral norm for matrices}
$$
	\norm{\nabla^{p}f_i(\x) - \nabla^{p}f_i(\y)} \leq L_p \norm{\x-\y}.
$$
\item Assuming that an algorithm starts at iterate $\x_0 = \mathbf{0}$, the initial gap to optimality is bounded by
$$\frac{1}{n}\sum_{i=1}^n f_i(\x_0) - \inf_\x \frac{1}{n}\sum_{i=1}^nf_i(\x) \leq \Delta.
$$
\end{enumerate}
\end{assumption}
Whenever $n$, $p$, $L_p$ and $\Delta$ are obvious from context, we say that $F$ satisfies Assumption~\ref{assumption:individual} if $F \in \mathcal{F}_p^n(\Delta, L_p)$.
Furthermore, note that if the function is $p+1$ times differentiable, then the first property is equivalent to requiring $\norm{\nabla^{p+1} f_i(\x)} \leq L_p$. 
\subsection{Algorithm and oracle models}
Usually, when $p$th-order smoothness is assumed, one works with derivatives up to the $p$th order. Therefore, in the interest of deriving lower bounds, it is even stronger to let the algorithm have access to as many derivatives as it would require. It turns out that this actually will not change the bounds, and they depend only on the order of smoothness $p$ of the considered function. We assume that an algorithm queries iterates according to the following definition, and we will lower bound the number of such queries it needs to do to reach its objective.
\begin{assumption}
	In the incremental higher-order oracle model (IHO), an oracle for a function $F = \frac{1}{n} \sum f_i$ consists of a mapping\footnote{We write $i:j$ or $[i:j]$ for the set of integers $\{i,\ldots,j\}$ and let $[m] := [1:m]$. Furthermore, we define $\R^{\otimes^k d}$ to be the space of $k$-dimensional tensors over $\R^d$. We denote by $\nabla^{(0:q)}$ the union of derivative tensors up to the order $q$.}
	\begin{align*} 
		\oracle_F^{(q)} : \N \times \R^d &\rightarrow \left(\R, \R^d,...,\R^{\otimes^q d}\right) \\
		(i, \x) &\mapsto \nabla^{(0:q)}f_i(\x).
	\end{align*}
	We condense the notation by letting $\oracle_F^{(q)}(i^{0:t-1}, \x^{(0:t-1)})$ correspond to the union of all oracle responses before iteration $t$.
\end{assumption}

We can then think of an algorithm as generating a sequence of indices and iterates, namely those it queries the IHO on. 

\begin{assumption}
\label{assumption:algo}
We will assume that an algorithm $\algo$ has access to an infinite sequence of random bits $\xi \sim \mathcal{U}([0,1])$ drawn at the beginning of the procedure. \footnote{For a deterministic algorithm, we simply assume the sequence is fixed.} Then, $\algo$ consists of a sequence of mappings $\{A^{(t)}\}_{t \in \N}$ which produce indices and iterates based on previous oracle responses:
	\begin{align*}
		[i^t, \x^{(t)}]
		&= A^{(t)} 
		\Big \{
		\xi, i^{0:t-1}, 
		\x^{(0:t-1)}, 
		\oracle_F^{(q)}(i^{0:t-1}, \x^{(0:t-1)})
		\Big\}.
	\end{align*}
	Without loss of generality, we set $\x^{(0)} = \vect{0}$ because if a function $f$ is difficult to optimize for starting point $\vect{0}$, then $\x \mapsto f(\x - \x^{(0)})$ is difficult to optimize for starting point $\x^{(0)}$. Finally, we set no restrictions on how $i^0$ is chosen.
\end{assumption}
Note that this is a quite general assumption, merely capturing the fact that the algorithm performs ``something'' between different queries. Also note that in the finite-sum setting, any potential randomness is inside the algorithm and not the oracle.	
\subsection{Complexity measure}
Finally, we need a proper measurement to characterize the complexity of an algorithm. We choose the following.
\begin{definition}
We define the oracle complexity $T_\epsilon(\algo, F)$ of an algorithm $\algo$ on $F$ as the infimum over all $t \in \N$ such that the following holds with probability at most $\frac{1}{2}$
$$
    \forall s \leq t \, : \,  \norm{\nabla F(\x^{(s)})} > \epsilon.
$$
In other words, this corresponds to $t$ such that for all larger $t'$, with probability $1/2$ the algorithm will encounter an iterate $s\leq t'$ with sufficiently small gradient.
\end{definition}
We note that $T_\epsilon(\algo, F) \geq t$ implies that for all $s \leq t$, $P(\norm{\nabla F(\x^{(s)})} > \epsilon) \geq 1/2$, and so by Markov's inequality, $\epsilon / 2 \leq \epsilon P(\norm{\nabla F(\x^{(s)})} > \epsilon) \leq \E\norm{\nabla F(\x^{(s)})}$ for all $s \leq t$.
This implies that we can also compare our lower bounds to the methods which give guarantees in terms of a complexity that ensures an output with a small gradient \emph{in expectation}.
\newcommand\ddelta{\bm{\delta}}

\section{Lower bounds for deterministic algorithms}
\label{section:deterministic_bound}

In this section, we show that any algorithm that can not resort to randomness can outperform only by a constant factor one that simulates a higher-order regularized method \cite{birgin:regularized}. By the latter, we mean a procedure which constructs the full derivative information at each step by querying all $n$ functions.

Inspired by \citet{carmon:lower:i} and \citet{woodworth:tight}, we define a family of hard instances that we will later instantiate depending on the algorithm's behaviour. The main intuition is to utilize an underlying function which has a large gradient as long as there are coordinates left which are very close to zero. Depending on the queries of the algorithm, we will be able to adversarially and incrementally choose a rotation of the input space in such a way that these coordinates indeed stay close to zero for a long time. 
\begin{definition}
\label{def:deterministic:hard_familly}
Let $K \in \N$ and for $k \in [K]$ let $\delta_k \in \{0,1\}$ be arbitrary. We define the function $f_{K,\ddelta} : \R^{K} \rightarrow \R$ as
\begin{align*}
f_{K,\delta}(\x) &:= -\delta_1 \Psi(1)\Phi(x_1) \\ 
& + \sum_{k=2}^{K} \delta_k\left[\Psi(-x_{k-1})\Phi(-x_k) -\Psi(x_{k-1})\Phi(x_k)\right],
\end{align*}
where the functions $\Phi$ and $\Psi$ are given by
	$$
		\Psi(x) := \begin{cases}
		 0 & x \leq 1/2 \\
		 \exp\left(1- \frac{1}{(2x-1)^2}\right) & \text{otherwise}
		\end{cases}
	$$
	and
	$$
		\Phi(x) = \sqrt{e}\int_{-\infty}^{x}e^{-\frac{1}{2}t^2}\mathrm{d}t.
	$$
\end{definition}

We should emphasize that the function $\bar{f}_{K}$ defined by \citet{carmon:lower:i} can be represented by $f_{K, \mathbf{1}}$.

For the remaining parts of this section, assume the algorithm $\algo$, the number of functions $n$ and parameters $\Delta$ and $L_p$ to be fixed. The idea is to construct $n$ functions of the above family, where each function $f_i(\x)$ will be given (modulo rescaling) by $f_{K+1,\ddelta_i}(\V^T \x)$ for some suitable $\ddelta_i \in \{0,1\}^{K+1}$ and shared $\V \in \R^{d\times K+1}$. In the convex finite-sum setting, an analogous construction is exploited by \citet{woodworth:tight} for first-order algorithms. We will split up the iterates of the algorithm in rounds, starting at $k=2$ and ending at $k=K+1$. Thus after round $k$, in total $k-1$ rounds will have elapsed. We define a round to span queries to $\lceil{n/2}\rceil$ \emph{different} functions.
With those concepts in hand, we define the hard instance as:
\begin{definition}
\label{definition:deterministic:hardinstance}
For $i \in [n]$ let $\delta_{i,1} = \mathbf{1}[i \leq \lceil n/2 \rceil]$. For $k \in [2:K+1]$ let $\delta_{i,k} = 1$ iff $\algo$ does not query function $i$ during round $k$. Further, let $d \geq K+1$ and let $\V \in \ortho(d,K+1)$ be a matrix with orthonormal columns. Let $\lambda, \sigma > 0$ be parameters we will fix later. Then, we define
$$
    f_i(\x) = \lambda \sigma^{p+1} f_{K+1,\ddelta_i}\left(\V^T\x / \sigma\right),
$$
and consequently $F(\x)=\frac{1}{n}\sum_{i=1}^n f_i(\x)$.
\end{definition}
We now prove that there exists an adversarial rotation with the following property:
\begin{lemma}
\label{lemma:deterministic:smallprod}
In Definition~\ref{definition:deterministic:hardinstance}, $\V$ can be chosen such that for the sequence of indices and iterates $\{[i^t, \x^{(t)}]\}$ that algorithm $\algo$ produces up to the end of round $K+1$, we have $\inner{\v_{K+1}}{\x^{(t)}} = 0$ for all $t$.
\end{lemma}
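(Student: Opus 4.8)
The plan is to build the orthonormal matrix $\V = [\v_1 \mid \cdots \mid \v_{K+1}]$ column by column, interleaving the construction with the algorithm's execution, and to maintain the invariant that after $\v_1,\dots,\v_k$ have been fixed, every iterate $\x^{(t)}$ produced by $\algo$ so far lies in $\mathrm{span}\{\v_1,\dots,\v_k\}$ (equivalently, $\inner{\v_j}{\x^{(t)}} = 0$ for all $j > k$ among the columns fixed so far). The key structural fact we exploit is that $f_{K+1,\ddelta}$ is essentially a zero-chain in the sense of \citet{carmon:lower:i}: the $k$-th coordinate of $\V^T\x$ only becomes ``active'' (i.e.\ the derivatives of $f_i$ can have a component along $\v_k$) once some earlier coordinate $x_{k-1}$ has been made nonzero. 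Concretely, because $\Psi$ vanishes on $(-\infty,1/2]$, the term in $f_{K+1,\ddelta_i}$ involving $x_k$ is multiplied by $\Psi(\pm x_{k-1})$, so as long as $\abs{\inner{\v_{k-1}}{\x}}$ stays below $\sigma/2$, the oracle response $\nabla^{(0:q)}f_i(\x)$ has no component in the direction $\v_k$, and hence — by the algorithm model in Assumption~\ref{assumption:algo}, which only lets $\algo$ combine past responses, past iterates, and randomness — the next iterate also has no component along $\v_k$.

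The construction of each column proceeds as follows. Before round $k$ begins, suppose $\v_1,\dots,\v_{k-1}$ are already determined and all iterates so far lie in their span. Run $\algo$ through round $k$, i.e.\ through the queries to $\lceil n/2\rceil$ distinct functions that define the round. During this round, the definition of $\ddelta_i$ records exactly which functions were queried, and thus which $\delta_{i,k}$ equal $1$. Now I choose $\v_k$: it must be a unit vector orthogonal to $\v_1,\dots,\v_{k-1}$ and orthogonal to \emph{all} iterates and all query directions the algorithm has used up to the end of round $k$. Since $d \geq K+1$ and only finitely many such vectors (at most a bounded number of iterates per round times $K+1$ rounds) have been produced, as long as we keep $d$ large enough the orthogonal complement of this finite set, intersected with $\mathrm{span}\{\v_1,\dots,\v_{k-1}\}^{\perp}$, is nonempty, so such a $\v_k$ exists. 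With $\v_k$ chosen orthogonal to all iterates up to the end of round $k$, the invariant $\inner{\v_k}{\x^{(t)}} = 0$ holds for all those $t$; and by the zero-chain property above, it continues to hold for all subsequently produced iterates until the coordinate $x_{k-1} = \inner{\v_{k-1}}{\x}/\sigma$ is pushed past $1/2$, which cannot happen before $\v_{k-1}$ itself is ``released'' — but we only need the statement up to the end of round $K+1$, so this bookkeeping closes. Applying this with $k = K+1$ gives exactly $\inner{\v_{K+1}}{\x^{(t)}} = 0$ for all $t$ up to the end of round $K+1$.

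The main obstacle is making precise the claim that ``the oracle response has no $\v_k$-component, hence the next iterate has no $\v_k$-component.'' This requires (i) a clean lemma on the zero-chain structure of $f_{K+1,\ddelta}$ — that $\partial_{x_k}^{\alpha} f_{K+1,\ddelta}(\x)$, and more generally any mixed partial derivative whose multi-index touches coordinate $k$ or higher, vanishes whenever $x_{k-1} = \cdots = x_{K+1} = 0$ — translated through the rotation $\V$ into the statement that $\inner{\v_k}{\nabla^{(0:q)}f_i(\x)}$ (interpreted appropriately for each tensor order) is zero when $\x \perp \v_k, \dots, \v_{K+1}$; and (ii) an induction on $t$ using Assumption~\ref{assumption:algo} to propagate ``$\x^{(0:t-1)}$ and all past responses lie in $\mathrm{span}\{\v_1,\dots,\v_k\}$'' to ``$\x^{(t)} \in \mathrm{span}\{\v_1,\dots,\v_k\}$.'' Care is needed because the oracle returns tensors, not just vectors, so one must argue that contracting any such tensor with vectors from the current span stays in the span — this is where the precise form of $f_{K+1,\ddelta}$ as a sum of products $\Psi(\pm x_{k-1})\Phi(\pm x_k)$ does the work. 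Everything else (existence of $\v_k$ by a dimension count, the round/query bookkeeping) is routine once this core propagation lemma is in place.
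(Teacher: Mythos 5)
There is a genuine gap, and it concerns a step you yourself flag as ``the main obstacle.'' You propose to propagate ``$\x^{(0:t-1)}$ and all past responses lie in $\mathrm{span}\{\v_1,\dots,\v_k\}$'' to ``$\x^{(t)} \in \mathrm{span}\{\v_1,\dots,\v_k\}$,'' and you assert that since the oracle response has no $\v_k$-component, ``the next iterate also has no component along $\v_k$.'' This does not follow from Assumption~\ref{assumption:algo}. The paper's algorithm model allows $\x^{(t)}$ to be an \emph{arbitrary} measurable function of the random seed, past indices, past iterates, and past oracle responses; nothing confines the next iterate to any span. The paper even calls out, when discussing \citet{gu:lower}, that a linear-span restriction is a shortcoming they deliberately avoid. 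Your ``equivalence'' between ``$\x^{(t)} \in \mathrm{span}\{\v_1,\dots,\v_k\}$'' and ``$\inner{\v_j}{\x^{(t)}}=0$ for $j>k$'' is also false as stated: an iterate can be orthogonal to every $\v_j$ and still live entirely outside $\mathrm{span}(\V)$. Consequently, the induction you propose in item (ii) cannot be carried out, and the claim that $\inner{\v_k}{\x^{(t)}}=0$ ``continues to hold for all subsequently produced iterates'' is wrong — after $\v_k$ is fixed, the oracle's responses in round $k+1$ genuinely depend on $\v_k$ and the algorithm is free to output $\sigma\v_k$ as its next iterate.

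The correct argument — which you gesture at but do not actually rely on — never needs iterates to stay in any span. During round $r$ the oracle answers using a truncated function $f_i^r$ that depends only on $\v_1,\dots,\v_{r-1}$, which are already fixed; since $\algo$ is deterministic, this determines all iterates of round $r$. Only \emph{then} is $\v_r$ chosen, retroactively, to be a unit vector orthogonal to $\v_1,\dots,\v_{r-1}$ and to every iterate produced so far (the dimension requirement you note covers this). The consistency check is that $g_i^r = f_i - f_i^r$ and all its derivatives vanish at each queried $\x^{(t)}$; this holds because every term in $g_i^r$ carries a factor $\Psi(\pm \inner{\v_{j}}{\x})$ with $j\geq r$, and $\inner{\v_j}{\x^{(t)}}=0$ in a neighborhood by the retroactive orthogonality — so the truncated and true functions supply identical derivative tensors of every order. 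Notice the direction of the argument: it is a post-hoc verification that the answers given were correct, not a forward propagation of a span invariant. Your first paragraph also misidentifies the zero-chain's role — what matters is that the omitted terms of $f_i$ have all derivatives equal to zero at the query points, not merely that the returned tensors have ``no component in direction $\v_k$.'' With the span-propagation removed and the consistency check made explicit, the rest of your construction (round bookkeeping, choice of $\delta_{i,k}$, dimension count) matches the paper's proof.
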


\begin{proof}{of Lemma \ref{lemma:deterministic:smallprod}}
We will omit the scaling parameters as they do not influence the proof in any way and define for $k \in [K]$ the shorthand $y_k = y_k(\x) =  \inner{\v_k}{\x}$.
We will construct the oracle such that during round $r \in [2:K+1]$, its responses are based on the function:
\begin{align*}
f_i^{r}(\x) &= -\delta_{i,1} \Psi(1)\Phi(y_1) \\ & + \sum_{k=2}^{r-1} \delta_{i,k}\left[\Psi(-y_{k-1})\Phi(-y_k) -\Psi(y_{k-1})\Phi(y_k)\right].
\end{align*}
We will show that $\V$ can be chosen such that these responses are consistent with Definition~\ref{definition:deterministic:hardinstance}. By consistence, we mean equality of the function values and derivatives at the queried indices and points.

By construction, the answers for round $r$, only depend on $\v_k$ and $\delta_{i,k}$ for $k < r$. This allows us to determine $\delta_{i,r}$ and $\v_r$ at the \emph{end} of round $r$. Specifically, we will choose $\v_r$ such that $\inner{\v_r}{\x^{(t)}} = 0$ for all iterates occurring before the end of round $r$ (i.e. all queries made so far). Further, $\v_r$ needs to be orthogonal to $\v_k$ for all $k < r$. These orthogonality constraints imply a requirement on the dimension of the domain of $F$. This dimension $d$ must therefore be linear in the sum of $K$ and of the final lower bound, to ensure orthogonality to both iterates and between the columns of $\V$ is possible. As mentioned above, we will also choose $\delta_{i,r} = 1$ iff function $i$ was not queried during round $r$. 

We must now prove that for all $q \geq 0$ and iterates $t$ queried during round $r$, we have  $\nabla^q{f_{i^t}^r}(\x^{(t)}) = \nabla^q{f_{i^t}}(\x^{(t)})$, guaranteeing that our oracle is aligned with the function from Definition~\ref{definition:deterministic:hardinstance}. For simplicity, we define $\x = \x^{(t)}$ and $i = i^t$. Then, we can write $f_i(\x)$ as
\begin{align*}
    f_i^r(\x) + \delta_{i,r}[\Psi(-y_{r-1})\Phi(-y_r) -\Psi(y_{r-1})\Phi(y_r)] + g_i^r(\x)
\end{align*}
for $g_i^r(\x) = f_i(\x) - f_i^r(\x)$. Since function $i$ was queried during round $r$, we have $\delta_{i,r} = 0$, and so $f_i(\x) = f_i^r(\x) + g_i^r(\x)$. Hence, it suffices that $\nabla^q g_i^r(\x) = \mathbf{0} \in \R^{\otimes^q d}$. Indeed, $\Psi(z) = 0$ for all $\abs{z} \leq 1/2$. By our choice of $\V$, we have $\inner{\v_k}{\x} = 0$ for all $k \geq r$. Since all terms in $g_i^r$ have a multiplicative factor $\Psi(\pm \inner{\v_{k-1}}{\x})$ for some $k \geq r+1$, the function $g_i^r$ is indeed constant 0 inside a neighbourhood of $\x$, and so all its derivative tensors are $\mathbf{0}$ at $\x$.
\end{proof}
We should stress that a key property of the function $\bar{f}_{K} = f_{K,\mathbf{1}}$ is that as long as the last coordinate in its input is zero, the gradient of the function will be lower bounded by a constant. 
\begin{lemma}[Lemma 2 in \citet{carmon:lower:i}]
	\label{lemma:original:largegradient}
	Let $\x \in \R^K$ with $\abs{x_k} < 1$ for some $k \in [K]$. Then, there exists $l \leq k$ with $\abs{x_l} < 1$ and
	$$
		\abs*{\frac{\partial \bar{f}_K}{\partial x_l }(\x)}  > 1.
	$$
\end{lemma}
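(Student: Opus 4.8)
The statement is a known lemma from Carmon et al., so the plan is essentially to reconstruct their argument from the explicit form of $\bar f_K = f_{K,\mathbf 1}$. The overall strategy is: pick the \emph{smallest} index $l$ with $|x_l| < 1$, and show that at that index the partial derivative is large in absolute value. The existence of such an $l \le k$ is immediate since $k$ itself satisfies $|x_k|<1$; the work is the lower bound $\left|\partial \bar f_K/\partial x_l\right| > 1$.

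\medskip

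\textbf{Step 1: write out the partial derivative.} With $\delta \equiv \mathbf 1$, we have
$
\bar f_K(\x) = -\Psi(1)\Phi(x_1) + \sum_{k=2}^K\bigl[\Psi(-x_{k-1})\Phi(-x_k) - \Psi(x_{k-1})\Phi(x_k)\bigr].
$
Only the two summands involving $x_l$ contribute to $\partial_l\bar f_K$ (the one where $x_l$ plays the role of the ``$\Phi$ argument'', index $k=l$, and the one where it plays the role of the ``$\Psi$ argument'', index $k=l+1$), together with the leading term if $l=1$. So
\[
\frac{\partial \bar f_K}{\partial x_l}(\x) = -\Psi(x_{l-1})\Phi'(x_l) - \Psi'(-x_l)\Phi(-x_{l+1}) - \Psi'(x_l)\Phi(x_{l+1}),
\]
with the convention $\Psi(x_0):=\Psi(1)$ for the $l=1$ case. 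The idea is that the first term is the ``main'' term and is bounded \emph{below} by a positive constant, while the remaining two terms essentially vanish.

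\medskip

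\textbf{Step 2: the $\Psi'$ terms vanish.} Because $l$ is the smallest index with $|x_l|<1$, every earlier coordinate has $|x_j|\ge 1$; in particular one needs $x_{l-1}\ge 1$ (the sign is forced by the construction / by the inductive structure of how the hard instance is traversed — this is exactly the point of the ``robust zero-chain'' property and of the previous lemma's role). More directly: $|x_l|<1$ means $|x_l|\le 1$, and $\Psi'(z)=0$ for all $|z|\le 1/2$; but we need it on $|z|\le 1$. Here one invokes the key structural fact baked into $\Phi,\Psi$: $\Psi$ is supported on $(1/2,\infty)$, and in the relevant regime the product terms $\Psi(x_{l-1})\cdot(\text{something})$ with the ``something'' being a derivative of $\Phi$ at a point of modulus $<1$ stay controlled, while $\Psi'(\pm x_l)=0$ since $|x_l|<1$ would need to be pushed to $|x_l|\le 1/2$. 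The cleaner route, and the one I would take, is: $\Psi'(x_l)$ and $\Psi'(-x_l)$ are both zero as soon as $|x_l|\le 1/2$, and for $1/2<|x_l|<1$ one checks by the explicit formula that the contributions are dominated. So the two $\Psi'$ terms contribute something of magnitude $\le$ a small constant (they are $0$ on the bulk of the range), and $\Phi(-x_{l+1}),\Phi(x_{l+1})$ are bounded in $[0,\sqrt e\int e^{-t^2/2}]$ anyway.

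\medskip

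\textbf{Step 3: the main term.} We are left to lower bound $|\Psi(x_{l-1})\Phi'(x_l)|$. We have $\Phi'(x_l) = \sqrt e\, e^{-x_l^2/2}$, and since $|x_l|<1$ this is at least $\sqrt e\cdot e^{-1/2} = 1$ — that is the whole point of the normalizing factor $\sqrt e$ in the definition of $\Phi$. And $\Psi(x_{l-1})$: if $l=1$ this factor is $\Psi(1)=e^{1-1/(2\cdot1-1)^2}=e^{1-1}=1$; if $l>1$ then $x_{l-1}\ge 1$ (all coordinates before the first sub-unit one are $\ge 1$), and $\Psi$ is increasing toward $1$ on $(1/2,\infty)$ with $\Psi(x)\to 1$, so $\Psi(x_{l-1})\ge \Psi(1) = 1$. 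Hence $|\Psi(x_{l-1})\Phi'(x_l)| \ge 1$. Combining with the bound on the error terms from Step 2, and tracking the (strict) inequalities carefully, gives $\left|\partial_l\bar f_K(\x)\right| > 1$.

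\medskip

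\textbf{Main obstacle.} The delicate point is Step 2 together with the sign bookkeeping: making fully rigorous that the two $\Psi'$-terms do not cancel the main term, and that the signs line up so there is no cancellation among the three terms of $\partial_l\bar f_K$. This is where Carmon et al.\ do a short but careful case analysis on the sign of $x_l$ and on whether $l=1$; I would mirror that, using the supports ($\Psi$ vanishes below $1/2$, hence $\Psi'(\pm x_l)=0$ for $|x_l|\le 1/2$) and explicit monotonicity of $\Psi$ to reduce to the narrow band $1/2<|x_l|<1$, where a crude numeric estimate of $\Psi,\Psi'$ suffices. Everything else is bounded-by-inspection from the closed forms of $\Phi$ and $\Psi$.
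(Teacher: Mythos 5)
Your proposed proof has a genuine gap, and it is not the one you flagged as the ``main obstacle.'' There are two issues, one fatal and one a misunderstanding of where the difficulty lies.

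First, the derivative formula in Step~1 is wrong for $l \ge 2$. Differentiating the $k=l$ summand $\Psi(-x_{l-1})\Phi(-x_l) - \Psi(x_{l-1})\Phi(x_l)$ with respect to $x_l$ produces \emph{two} terms, $-\Psi(-x_{l-1})\Phi'(-x_l)$ and $-\Psi(x_{l-1})\Phi'(x_l)$, but you keep only the second. The full expression is
\begin{equation*}
\frac{\partial \bar f_K}{\partial x_l}(\x) = -\Psi(-x_{l-1})\Phi'(-x_l) - \Psi(x_{l-1})\Phi'(x_l) - \Psi'(-x_l)\Phi(-x_{l+1}) - \Psi'(x_l)\Phi(x_{l+1}).
\end{equation*}
The dropped term is not harmless: it is exactly what carries the lower bound in half the cases. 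Second, and relatedly, the claim in Step~3 that ``if $l>1$ then $x_{l-1}\ge 1$'' is unjustified. The lemma holds for arbitrary $\x$; minimality of $l$ gives only $\abs{x_{l-1}} \ge 1$, and $x_{l-1}$ may well be $\le -1$. In that case $\Psi(x_{l-1}) = 0$ (since $x_{l-1} \le 1/2$), your ``main term'' vanishes identically, and your formula -- lacking the $\Psi(-x_{l-1})\Phi'(-x_l)$ term -- has nothing left to produce a bound above $1$. The appeal to the ``robust zero-chain'' structure to force the sign is a red herring: the statement is about a fixed function evaluated at an arbitrary point, not about iterates of an algorithm.

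The obstacle you do worry about in Step~2 (that the $\Psi'$ terms might cancel the main term for $1/2 < \abs{x_l} < 1$) is not a real obstacle. By Lemma~\ref{carmon:technical_lemma}~ii) of the paper, $\Psi$, $\Psi'$, $\Phi$, $\Phi'$ are all non-negative, so \emph{all four} summands in the correct derivative expression have the same sign ($\le 0$). Hence $\abs{\partial_l \bar f_K(\x)}$ is a sum of non-negative terms and is at least as large as any single one of them; there is no possibility of cancellation and no case analysis on the sign of $x_l$ is needed. The correct argument is then short: exactly one of $\Psi(x_{l-1})$, $\Psi(-x_{l-1})$ is $\ge \Psi(1) = 1$ (depending on the sign of $x_{l-1}$), and the matching $\Phi'(\pm x_l) = \sqrt{e}\,e^{-x_l^2/2} > 1$ since $\abs{x_l} < 1$ strictly. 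So one of the first two summands already has magnitude $> 1$, and the remaining non-negative summands can only help.
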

This property can be transferred to $F = \frac{1}{n}\sum f_i$:
\begin{lemma} For all iterates up to the end of round $K+1$, we have $\left(\V^T\x^{(t)}\right)_{K+1} = 0$, and so
$$
    \norm{\nabla F(\x^{(t)})} > \frac{\lambda \sigma^p}{4}.
$$
\end{lemma}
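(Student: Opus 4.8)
The plan is to reduce the gradient bound to a coefficient-weighted version of the zero-chain estimate in Lemma~\ref{lemma:original:largegradient}. The first assertion is immediate: Lemma~\ref{lemma:deterministic:smallprod} gives $\inner{\v_{K+1}}{\x^{(t)}}=0$ for every iterate $t$ produced up to the end of round $K+1$, and since $\v_{K+1}$ is the last column of $\V$ this says precisely $(\V^T\x^{(t)})_{K+1}=0$. First I would set $\y=\y^{(t)}:=\V^T\x^{(t)}/\sigma\in\R^{K+1}$, so $y_{K+1}=0$. Then, by the chain rule applied to $f_i(\x)=\lambda\sigma^{p+1}f_{K+1,\ddelta_i}(\V^T\x/\sigma)$ (from Definition~\ref{definition:deterministic:hardinstance}), one gets $\nabla f_i(\x^{(t)})=\lambda\sigma^{p}\V\nabla f_{K+1,\ddelta_i}(\y)$, hence $\nabla F(\x^{(t)})=\lambda\sigma^{p}\V\nabla G(\y)$ with $G:=\tfrac{1}{n}\sum_{i=1}^n f_{K+1,\ddelta_i}$; since $\V$ has orthonormal columns this yields $\norm{\nabla F(\x^{(t)})}=\lambda\sigma^{p}\norm{\nabla G(\y)}$. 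So it suffices to show $\norm{\nabla G(\y)}>\tfrac{1}{4}$ whenever $y_{K+1}=0$.

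Next I would observe that, by linearity and the form in Definition~\ref{def:deterministic:hard_familly}, $G$ is again of the same zero-chain shape but with non-negative real coefficients, $G(\y)=-c_1\Psi(1)\Phi(y_1)+\sum_{k=2}^{K+1}c_k[\Psi(-y_{k-1})\Phi(-y_k)-\Psi(y_{k-1})\Phi(y_k)]$, where $c_k:=\tfrac{1}{n}\sum_{i=1}^n\delta_{i,k}$. From Definition~\ref{definition:deterministic:hardinstance}, $c_1=\lceil n/2\rceil/n\ge\tfrac{1}{2}$, and for $k\in[2:K+1]$ exactly $\lceil n/2\rceil$ functions are queried during round $k$, so $c_k=\lfloor n/2\rfloor/n\ge\tfrac{1}{3}$ (using $n\ge2$); in particular every $c_k\ge\tfrac{1}{4}$.

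The heart of the argument is a weighted analogue of Lemma~\ref{lemma:original:largegradient}: if $y_{K+1}=0$ and $\hat k$ denotes the smallest index with $\abs{y_{\hat k}}<1$ (which exists, since $\abs{y_{K+1}}=0<1$, and satisfies $\hat k\le K+1$), then $\abs{\partial_{y_{\hat k}}G(\y)}>c_{\hat k}$. I would prove this by re-running Carmon et al.'s short sign analysis on $G$: every term contributing to $\partial_{y_{\hat k}}G$ is non-positive because $\Psi\ge0$, $\Phi>0$, $\Phi'>0$, $\Psi$ is non-decreasing so $\Psi'\ge0$, and all $c_k\ge0$; and the dominant term is $-c_1\Psi(1)\Phi'(y_1)=-c_1\Phi'(y_1)$ when $\hat k=1$ (using $\Psi(1)=1$), or $-c_{\hat k}\Psi(-y_{\hat k-1})\Phi'(-y_{\hat k})-c_{\hat k}\Psi(y_{\hat k-1})\Phi'(y_{\hat k})$ when $\hat k\ge2$, where minimality of $\hat k$ gives $\abs{y_{\hat k-1}}\ge1$ and hence one of $\Psi(\pm y_{\hat k-1})\ge\Psi(1)=1$. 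In both cases $\abs{y_{\hat k}}<1$ gives $\Phi'(z)=\sqrt e\, e^{-z^2/2}>\sqrt e\, e^{-1/2}=1$ for the relevant $z\in\{y_{\hat k},-y_{\hat k}\}$, so $\abs{\partial_{y_{\hat k}}G(\y)}>c_{\hat k}\ge\tfrac{1}{4}$, whence $\norm{\nabla G(\y)}\ge\abs{\partial_{y_{\hat k}}G(\y)}>\tfrac{1}{4}$. Plugging this into the scaling identity of the first paragraph gives $\norm{\nabla F(\x^{(t)})}=\lambda\sigma^p\norm{\nabla G(\y)}>\lambda\sigma^p/4$.

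The main obstacle is this last step: Lemma~\ref{lemma:original:largegradient} is stated only for $\bar f_K=f_{K,\mathbf 1}$, so it cannot be invoked as a black box for the coefficient-weighted function $G$; I would have to revisit the case analysis and verify that non-negativity of all the coefficients preserves the sign cancellations, with the surviving term still carrying the factor $c_{\hat k}$ — which is exactly where the hypothesis $n\ge2$ (hence $c_k\ge\tfrac{1}{3}>\tfrac{1}{4}$) enters. Everything else is routine: the chain-rule identity, orthonormality of $\V$, and the elementary facts $\Psi(1)=1$ and $\Phi'(z)>1$ for $\abs z<1$.
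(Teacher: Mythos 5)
Your proposal is correct and, at its core, takes the same route as the paper: chain rule to reduce to the gradient of $G := \frac{1}{n}\sum_{i} f_{K+1,\ddelta_i}$ at $\y = \V^T\x^{(t)}/\sigma$, orthonormality of $\V$ to drop the $\V$ factor, and the large-gradient property of the zero-chain to bound $\norm{\nabla G(\y)}$ from below. The one spot where you genuinely deviate is in fact a careful patch of a small imprecision in the paper's write-up: the paper treats $G$ as exactly $\frac{\lceil n/2\rceil}{n}\bar f_{K+1}$ and then applies Lemma~\ref{lemma:original:largegradient} directly, but the construction gives coefficient $\lceil n/2\rceil/n$ for $k=1$ and $\lfloor n/2\rfloor/n$ for $k\ge 2$, so the identification is exact only when $n$ is even. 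You instead keep $G$ as a coefficient-weighted zero-chain, re-run Carmon et al.'s sign/monotonicity argument to show $\lvert\partial_{\hat k}G(\y)\rvert > c_{\hat k}$, and lower bound each $c_k$ by $\lfloor n/2\rfloor / n \geq 1/3 > 1/4$ for $n \geq 2$ --- which handles odd $n$ cleanly and lands on the claimed $\lambda\sigma^p/4$. (An equivalent shortcut: write $G(\y) = \frac{\lfloor n/2\rfloor}{n}\bar f_{K+1}(\y) - (c_1 - \frac{\lfloor n/2\rfloor}{n})\Psi(1)\Phi(y_1)$, note both contributions to $\partial_1 G$ have the same sign, and then invoke Lemma~\ref{lemma:original:largegradient} as a black box to get $\lvert\partial_l G(\y)\rvert \geq \frac{\lfloor n/2\rfloor}{n}\lvert\partial_l\bar f_{K+1}(\y)\rvert > 1/3$.) Your identification of the $n\ge 2$ requirement and the explicit verification of $\Psi(1)=1$, $\Psi' \ge 0$, and $\Phi'(z)>1$ for $\abs{z}<1$ are all correct and match what the paper relies on implicitly.
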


To show the main result, we merely have to set the scaling parameters such that our function respects Assumption \ref{assumption:individual}. Note that $\lambda$ controls the smoothness parameter, $\sigma$ controls the gradient norm lower bound and $K$ needs to be chosen as large as possible, but in a way that makes $F$ respect the initial optimality gap $\Delta$. Together, they can be chosen to imply the theorem below. 

\begin{theorem}
\label{theorem:deterministic}
For any $p$ and deterministic algorithm $\algo$ satisfying Assumption~\ref{assumption:algo}, for any $n$, $\Delta$ and $L_p$ and $\varepsilon$ there exists a function $F \in \mathcal{F}_p^n(\Delta, L_p)$ such that
\begin{equation}
 T_\epsilon(\algo, F) \geq \Omega \left( \left(\frac{L_p}{\ell_p}\right)^{1/p}\frac{\Delta n}{\varepsilon^{(p+1)/p}}\right), \label{eq:deterministic:omega}
\end{equation}
where the constant factors hidden by $\Omega$ do not depend on $n$, $\varepsilon$ or $p$ and $\ell_p \leq \exp{\left(\frac{5}{2}p\log p + cp\right)}$ for some constant $c < \infty$. Moreover, the dimension of this function merely needs to be of the same order as \eqref{eq:deterministic:omega}.
\end{theorem}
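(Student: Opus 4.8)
The plan is to take the hard instance from Definition~\ref{definition:deterministic:hardinstance} together with the gradient lower bound already established (the lemma stating $\norm{\nabla F(\x^{(t)})} > \tfrac{\lambda\sigma^p}{4}$ for all iterates up to the end of round $K+1$), and to tune the three free parameters $\lambda$, $\sigma$ and $K$ so that $F \in \mathcal{F}_p^n(\Delta, L_p)$ and so that the gradient lower bound exceeds $\varepsilon$. Since a round consists of queries to $\lceil n/2 \rceil$ distinct functions, completing $K$ rounds forces at least $\Omega(Kn)$ oracle calls, and combining this count with the parameter constraints will yield the claimed bound.

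The key steps, in order, are as follows. First, I would record the relevant scale-invariance facts for the rescaled function $f_i(\x) = \lambda\sigma^{p+1} f_{K+1,\ddelta_i}(\V^T\x/\sigma)$: its $(p+1)$-st derivative tensor has norm at most $\lambda \cdot \mathrm{poly}(p)$ times the corresponding quantity for the base function $f_{K,\mathbf{1}}$ (orthogonality of $\V$'s columns means $\V^T(\cdot)/\sigma$ contributes only a factor $\sigma^{-(p+1)}$, which cancels the $\sigma^{p+1}$ prefactor), and the gradient scales like $\lambda\sigma^p$. I would cite or re-derive the bound on the $(p+1)$-st derivative of $f_{K,\mathbf{1}}$ — this is where the factor $\ell_p \le \exp(\tfrac{5}{2}p\log p + cp)$ enters; it is essentially the $p$th-order smoothness constant of Carmon et al.'s base construction, which grows like $p^{\Theta(p)}$. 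Choosing $\lambda \asymp L_p/\ell_p$ then makes each $f_i$ satisfy property (i) of Assumption~\ref{assumption:individual}. Second, I would bound the initial gap $F(\mathbf 0) - \inf F$: since $f_{K+1,\ddelta_i}(\mathbf 0)$ is $O(1)$, $\inf f_{K+1,\ddelta_i} \ge -O(K+1)$, and the rescaling multiplies by $\lambda\sigma^{p+1}$, one gets $F(\mathbf 0)-\inf F \le O(\lambda\sigma^{p+1}(K+1))$; requiring this to be $\le \Delta$ gives $K \asymp \Delta/(\lambda\sigma^{p+1})$. Third, to force the gradient to stay above $\varepsilon$ through all rounds, I set $\tfrac{\lambda\sigma^p}{4} = \varepsilon$, i.e. $\sigma \asymp \varepsilon/\lambda$. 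Fourth, I substitute back: $K \asymp \Delta/(\lambda\sigma^{p+1}) \asymp \Delta \lambda^p / (\lambda\,\varepsilon^{p+1}/\lambda^{p}) $ — carefully, $\sigma^{p+1} = \varepsilon^{p+1}/\lambda^{p+1}$, so $K \asymp \Delta\lambda^{p}/\varepsilon^{p+1}$, and with $\lambda\asymp L_p/\ell_p$ this is $K \asymp (L_p/\ell_p)\,\Delta/\varepsilon^{(p+1)/p}$ after taking the $p$-th root appropriately — I will do this algebra cleanly to match the exponent $1/p$ in the statement. Finally, $T_\varepsilon(\algo,F) \ge \lceil n/2\rceil \cdot K = \Omega((L_p/\ell_p)^{1/p}\Delta n/\varepsilon^{(p+1)/p})$, since the algorithm cannot certify a small gradient before round $K+1$ ends, and each of the $K$ rounds costs $\ge \lceil n/2 \rceil$ queries. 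The dimension requirement $d \gtrsim K + (\text{lower bound})$ was already flagged in the proof of Lemma~\ref{lemma:deterministic:smallprod}, and since $K$ is itself of the order of the lower bound divided by $n$, $d$ of order \eqref{eq:deterministic:omega} suffices.

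The main obstacle is the bookkeeping of the smoothness constant and its $p$-dependence: verifying that the $(p+1)$-st derivative of the base function $f_{K,\mathbf 1}$ (or equivalently the $p$th-order Lipschitz constant) is bounded by $\ell_p = \exp(\tfrac 5 2 p\log p + O(p))$ uniformly in $K$ requires controlling products $\Psi(\pm y_{k-1})\Phi(\pm y_k)$ and all their mixed partials up to order $p+1$; one uses that $\Psi,\Phi$ and all their derivatives are uniformly bounded, that $\Psi$ has compact-ish support making cross-terms in the sum overlap only in pairs, and that Leibniz/Faà di Bruno expansions of a product of $p+1$ factors generate at most $p^{O(p)}$ terms. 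The rest — the parameter substitutions and the round-counting — is routine once the scale invariances are pinned down, but the exponent arithmetic relating $\lambda$, $\sigma$, $K$ and the final bound must be carried out with care to land exactly on $(L_p/\ell_p)^{1/p}\Delta n \varepsilon^{-(p+1)/p}$, and one must double-check that the constants hidden by $\Omega$ genuinely do not depend on $n$, $\varepsilon$ or $p$ (all $p$-dependence having been isolated into $\ell_p$).
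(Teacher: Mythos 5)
Your proposal follows the same structure as the paper's proof: use the adversarial hard instance from Definition~\ref{definition:deterministic:hardinstance}, invoke the per-round gradient lower bound $\norm{\nabla F(\x^{(t)})} > \lambda\sigma^p/4$, calibrate $\lambda = L_p/\ell_p$ for smoothness (via the tensor operator norm inequality and the $\ell_p$-smoothness of the base function), calibrate $\sigma$ so the gradient bound exceeds $\varepsilon$, pick $K$ as large as the optimality gap $\Delta$ allows, and multiply the $K$ rounds by the $\lceil n/2\rceil$ distinct-function queries per round. The one thing to flag is a slip in your middle algebra: from $\lambda\sigma^p/4 = \varepsilon$ you get $\sigma = (4\varepsilon/\lambda)^{1/p}$, not $\sigma \asymp \varepsilon/\lambda$, so $\sigma^{p+1} = (4\varepsilon/\lambda)^{(p+1)/p}$ and hence $\lambda\sigma^{p+1} = \lambda^{-1/p}(4\varepsilon)^{(p+1)/p}$, giving $K \asymp \Delta\lambda^{1/p}/\varepsilon^{(p+1)/p} = (L_p/\ell_p)^{1/p}\Delta\varepsilon^{-(p+1)/p}$ directly — you noted this needs cleaning and did land on the right target, so it is a presentational issue rather than a gap. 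Otherwise the proposal matches the paper's argument essentially step for step.
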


To summarize, we get a lower bound of $\Omega\left(n\varepsilon^{-(p+1)/p}\right)$. In the noiseless $n=1$ setting, the optimal complexity is characterized by $\Theta(\varepsilon^{-(p+1)/p})$ \cite{carmon:lower:i}. Indeed, \citet{birgin:regularized} prove this to be achievable with higher-order regularized methods, subsuming results known for gradient descent and cubic regularization. These methods also imply that Theorem~\ref{theorem:deterministic} characterizes the optimal oracle complexity, as we can simulate a higher-order regularized method by spending $n$ queries at each iterate.
\section{Lower bounds for randomized algorithms}

When constructing hard instances for randomized algorithms, one does not have the luxury of reacting to the algorithms queries, because we cannot anticipate the random seed $\xi$. The approach taken here, and in prior work, is to draw orthogonal vectors from some high-dimensional space and show that with some fixed probability, the iterates of the algorithm will be close to orthogonal to this set of important directions. 

The analysis of those constructions is quite intricate, and the dimensions required are larger, even more so when dropping the assumption that the iterates stay in the span of the queried derivatives (as assumed, e.g. by \citet{gu:lower}). We will first provide a sketch of the main argument, and then go on to state the key results.

To reason about the construction, a very useful notion is that of a higher-order (robust) zero-chain \cite{carmon:lower:i}. 
\begin{definition}[Robust zero-chain]
	\label{def:robustchain}
	A function $f: \R^d \rightarrow \R$ is a robust zero-chain if for all $i\in [d]$ and $\x \in \R^d$ the following implication holds: If $\abs{x_j} < \frac{1}{2}$ for all $j \geq i$, then
\begin{align*}
\forall \y \in N(\x) \,:\,f(\y) = f(y_1,\ldots, y_i, 0,\ldots,0),
\end{align*}
	where $N(\x)$ denotes an open neighborhood of $\x$.
\end{definition}

One can observe that the partial derivatives of such a function $f$ at $\x$ are zero for all indices $j > i$, which is the key to ensure that the oracle responses give away information slowly. 

Recall the function $\bar{f}_K = f_{K,\mathbf{1}}$ from Definition~\ref{def:deterministic:hard_familly}. This function is a robust zero-chain for any $K \geq 1$ \cite{carmon:lower:i}. Since it also has the desirable property that its gradient is large as long as there remain coordinates which are close to zero, we can exploit copies of it in a lower bound construction. Instead of using a single matrix $\V$ to rotate the input adversarially, we will follow \citet{fang:spider} and use $n$ different matrices $\B_i$ with orthogonal columns drawn at random and prove that with some fixed probability for a large number of iterates
$$
    \inner{\bb_{i,K}}{\x^{(t)}} < 1/2.
$$
This will (similarly as in the deterministic case) imply that the gradient of function $i$ is bounded from below. We will refer to the process of the algorithm finding inputs that make these inner products large as ``discovering'' coordinates.

\begin{definition}[Finite-sum hard distribution]
	\label{def:finitesumhard}
	Let $n,K \in \N$. Let $d \in \N$ be divisible by $n$ and let $R = 230\sqrt{K}$. 
	Then draw $\B = \big[\B_1 \,\vert\, \cdots \,\lvert\, \B_n \big] \in \ortho(d/n, nK)$
	uniformly at random and let $
	\C = \big[\C_1 \,\vert \,\cdots \,\lvert\, \C_n \big] \in \ortho(d, d)
	% Therefore $\C_i \in \ortho(d,d/n)$ and $\B_i \in \ortho(d/n,K)$
	$ be arbitrary. We define our unscaled hard instance with finite-sum structure as
	\begin{align*}
	F^*\, : \,\,\, &\R^{d} \rightarrow \R \\
	&\x \mapsto \frac{1}{n}\sum_{i=1}^{n} f^*_i(\x) = \frac{1}{n}\sum_{i=1}^{n}\hat{f}_{K;\B_i}(\C_i^T\x),
	\end{align*} 
	where we define
	$$
	    \hat{f}_{K;\B_i}(\y) := \bar{f}_K\left(\B_i^T\rho\left(\y\right)\right) + \frac{1}{10}\norm{\y}^2
	$$
	and 
	$$
		\rho(\y) := \frac{\y}{\sqrt{1 + \norm{\y}^2/R^2}}.
	$$
	Because of the random choice of matrix $\B$, this induces a distribution.
\end{definition}

Note that the same construction has been used in \citet{fang:spider} to show a lower bound for the first-order mean-squared setting and that the last two definitions are originally due to \citet{carmon:lower:i}. A brief discussion is in order: the purpose of $\B_i$ is as discussed above, namely using the zero-chain property of $\bar{f}_K$ to make sure that any algorithm has a hard time discovering coordinates. The composition with $\rho$ ensures that an algorithm cannot simply make the iterates large to learn coordinates, and $\C_i$ will be useful to bound the gradient norm of $F$ in terms of the $\norm{\nabla f_i}$'s: exactly what we need for a lower bound.

Our goal is to derive lower bounds for any possible Lipschitz constants and optimality gaps. This means that we will scale $F^*$ to meet the various requirements. The notion of a function-informed process \cite{carmon:lower:i} will permit us to reason about a scaled version of our function $F^*$ while thinking about what another algorithm would do on the unscaled $F^*$.

\begin{definition}[Function-informed process]
	We call a sequence of indices and iterates $\{[i^t, x^{(t)}]\}_{t \in \N}$ informed by a function $F$ if it follows the same distribution as $\algo[F]$ for some randomized $\algo$.
\end{definition}
\begin{lemma}
	\label{obs:informedprocess}
	Let $F$ be an instance of a finite-sum optimization problem. Let $a,b >0$. Consider the function $G(\x) = aF(\x/b)$ and assume $\{[i^t, \x^{(t)}]\}_{t \in \N}$ is produced by $\algo$ on function $G$, i.e. $\algo[G] = \{[i^t, \x^{(t)}]\}_{t \in \N}$. Then $\{[i^t, \x^{(t)} / b]\}_{t \in \N}$ is informed by $F$.
\end{lemma}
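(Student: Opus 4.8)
The plan is to show that the scaled process on $G$ and the unscaled process on $F$ agree in distribution once we rescale iterates by $b$, which amounts to showing that the oracle responses are related by simple scaling factors that the algorithm can undo internally. First I would unpack the definitions: by the definition of a function-informed process, it suffices to exhibit \emph{some} randomized algorithm $\algo'$ such that $\algo'[F]$ has the same distribution as $\{[i^t, \x^{(t)}/b]\}_{t\in\N}$, where the latter is obtained from $\algo[G]$. The natural candidate is the algorithm $\algo'$ that, on each step, takes the iterate $\z^{(t)}$ it is about to produce, internally multiplies it by $b$ to form $\x^{(t)} = b\z^{(t)}$, pretends it queried $G$ at $\x^{(t)}$, and reconstructs what that response would have been from the response it actually received for $F$ at $\z^{(t)}$.

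The key computational step is the chain rule for the scaling $G(\x) = aF(\x/b)$: for each order $q$, $\nabla^{(q)} G(\x) = a b^{-q} \nabla^{(q)} F(\x/b)$, and similarly $G(\x) = a F(\x/b)$ for the function value, and these identities hold componentwise for each $f_i$ since $G = \frac1n\sum_i g_i$ with $g_i(\x) = a f_i(\x/b)$. Hence if $\algo$ queries $G$ at index $i$ and point $\x$, the oracle returns $\oracle_G^{(q)}(i,\x) = (a f_i(\x/b),\, a b^{-1}\nabla f_i(\x/b),\,\ldots,\, a b^{-q}\nabla^{(q)} f_i(\x/b))$, which is a fixed invertible linear transformation (depending only on the known constants $a,b,q$) of $\oracle_F^{(q)}(i,\x/b)$. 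Therefore $\algo'$ can, given the response $\oracle_F^{(q)}(i, \z^{(t)})$ to its own query at $\z^{(t)} = \x^{(t)}/b$, compute the response $\oracle_G^{(q)}(i, \x^{(t)})$ that $\algo$ would have seen, feed it into the mappings $A^{(t)}$ defining $\algo$ (using the same random seed $\xi$), obtain $[i^t, \x^{(t)}]$, and output $[i^t, \x^{(t)}/b]$. One checks inductively over $t$ that with a shared seed $\xi$ the indices $i^t$ produced are identical and the points produced by $\algo'$ are exactly $\x^{(t)}/b$, so the two sequences coincide as random variables, in particular in distribution; note $\x^{(0)}=\vect 0$ is consistent since $\vect 0/b = \vect 0$.

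I would present this as a short induction: assuming $\algo'$ has so far produced $\{[i^s,\x^{(s)}/b]\}_{s<t}$ and has access (via the simulated responses) to exactly $\oracle_G^{(q)}(i^{0:t-1},\x^{(0:t-1)})$, applying $A^{(t)}$ with the shared seed yields the same $[i^t,\x^{(t)}]$ as $\algo$ on $G$, after which $\algo'$ queries $F$ at $\x^{(t)}/b$, receives $\oracle_F^{(q)}(i^t,\x^{(t)}/b)$, and recovers $\oracle_G^{(q)}(i^t,\x^{(t)})$ by the scaling identity, closing the induction. The only mild subtlety — and the one place to be careful — is bookkeeping that $\algo'$ as defined really fits Assumption~\ref{assumption:algo}, i.e. that the transformation from $F$-responses to $G$-responses uses only quantities available to $\algo'$ (the constants $a,b$ and the current point, all of which it knows) and no hidden information about $F$; this is immediate but worth stating. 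There is no serious obstacle here: the lemma is essentially a change-of-variables observation, and the main work is just writing the chain-rule scaling cleanly and phrasing the simulation so it visibly satisfies the algorithm model.
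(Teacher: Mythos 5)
Your proposal is correct and takes essentially the same route as the paper: both exhibit a simulating algorithm whose $t$th mapping rescales the incoming $F$-oracle responses by the chain-rule factors $a b^{-k}$ to reconstruct the $G$-oracle responses, feeds them into $A^{(t)}$ with the shared seed $\xi$, and returns the output divided by $b$, with a short induction showing $\mathsf{B}_\xi[F]^{(t)} = \algo_\xi[G]^{(t)}/b$ for all $t$. The paper simply writes the new algorithm's mappings $B^{(t)}$ as explicit formulas rather than describing the simulation operationally, but the content is identical.
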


Above, we hinted at the importance of small inner products of the iterates with the columns of the matrices $\B_i$. This intuition is formalized in the lemma below, that also appears in \citet{fang:spider} (for first-order algorithms) and closely resembles key lemmas in prior work \cite{woodworth:tight, carmon:lower:i}.
\begin{lemma}
	\label{lemma:fangkey}
	Let $\{[i^t,\x^{(t)}]\}_{t\in \N}$ be informed by $F^*$ drawn from the distribution in Definition \ref{def:finitesumhard}, let $\delta \in (0,1)$ and $T = \frac{nK}{2}$. 
	For any $t \in [T]$ and $i \in [n]$, let $I_i^{t-1}$ be the number of occurrences of index $i$ in $i^{0:t-1}$, i.e. the number of queries with index $i$ up to iteration $t$ (the iteration producing $\x^{(t)}$). Let $I_i^{-1} = 0$ by default. For any $t \in \{0,\ldots,T\}$ define  $\mathcal{U}_i^{(t)}$ to be the set of the last $K - I_i^{(t-1)} $ columns of $\B_i$ (provided $K - I_i^{t-1} \geq 1$, otherwise the set is empty). More formally
	$$
		\mathcal{U}_i^{(t)} := \left\{ \bb_{i,I_i^{t-1}+1} , \ldots , \bb_{i,K} 
			\right\}.
	$$
	
	Then the following holds for some constant $c_0 < \infty$:
	if $d \geq c_0 n^3K^2 \log(\frac{n^2K^2}{\delta})$, then with probability at least $1-\delta$ we have $\forall   t \in \{0,\ldots, T\} ,\, \forall i \in [n] ,\, \forall \mathbf{u} \in \mathcal{U}_i^{(t)}$
	\begin{equation}
	\label{eq:small_prod}
	 \abs{\langle \mathbf{u}, \rho(\C_i^T \x^{(t)})\rangle} < \frac{1}{2}.
	\end{equation}
\end{lemma}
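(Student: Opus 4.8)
Since the sequence is informed by $F^*$, it has, by definition, the law of the transcript $\{[i^t,\x^{(t)}]\}$ produced by some randomized algorithm $\algo$ with private randomness $\xi$ run against the IHO oracle of $F^*$; fix such an $\algo$. The matrices $\C_1,\dots,\C_n$ are non-random, so the only randomness in $F^*$ is the Haar-random frame $\B=[\B_1\mid\cdots\mid\B_n]$ of $nK$ orthonormal vectors in $\R^{d/n}$. The plan is to run the rotational-invariance argument of \citet{carmon:lower:i} and \citet{fang:spider} with a \emph{lazy exposure} of the columns of $\B$: the column $\bb_{i,k}$ is revealed --- as a uniformly random unit vector orthogonal to every column revealed so far --- at the instant index $i$ is queried for the $k$th time; any columns still unrevealed after step $T$ are revealed afterwards in a fixed order. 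By rotational invariance of the Haar measure this reproduces the joint law of $(\B,\{[i^t,\x^{(t)}]\})$, so it suffices to prove \eqref{eq:small_prod} in the lazily-exposed model.

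The engine of the proof is that the robust zero-chain property of $\bar f_K$ (Definition~\ref{def:robustchain}) forces the oracle to leak columns slowly, \emph{and does so for all derivative orders $q$ at once} --- which is precisely what is needed to handle the higher-order oracle rather than only gradients. Suppose \eqref{eq:small_prod} has held at every iterate up to the $t$th query, which has index $i$ and is the $k$th query to $i$; writing $\y^{(t)}=\rho(\C_i^T\x^{(t)})$, this says $\abs{\langle\bb_{i,j},\y^{(t)}\rangle}<\tfrac12$ for all $j\ge k$. Since $\x\mapsto\rho(\C_i^T\x)$ is continuous, these inequalities persist on a neighbourhood of $\x^{(t)}$, and there the zero-chain property collapses $\bar f_K(\B_i^T\rho(\C_i^T\x))$ to $\bar f_K(\langle\bb_{i,1},\rho(\C_i^T\x)\rangle,\dots,\langle\bb_{i,k},\rho(\C_i^T\x)\rangle,0,\dots,0)$. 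As the remaining term $\tfrac1{10}\norm{\C_i^T\x}^2$ of $f_i^*$ is $\B$-free, the whole response $\nabla^{(0:q)}f_i^*(\x^{(t)})$ is a function of $\bb_{i,1},\dots,\bb_{i,k}$ and the fixed data only. Inductively, then, every iterate $\x^{(0)},\dots,\x^{(T)}$ is measurable with respect to $\xi$ and the columns exposed before it, and in particular independent of every column not yet exposed. (No linear-span restriction on $\algo$ is used here, in contrast to \citet{gu:lower}.)

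It remains to show that a fresh column rarely has a large inner product with the iterates it must be compared against. Fix the exposure of $\bb_{i,k}$ and condition on everything exposed before it. The iterates $\x^{(t)}$ with $\bb_{i,k}\in\mathcal U_i^{(t)}$ are exactly $\x^{(0)},\dots,\x^{(s)}$, where $s$ is the step of the $k$th query to $i$ (or $s=T$ if there is no such query), and by the previous paragraph these are already determined and independent of $\bb_{i,k}$. Moreover $\bb_{i,k}$ is uniform on the unit sphere of a subspace of dimension $m\ge d/n-nK$, so for each such iterate, using $\norm{\y^{(t)}}=\norm{\rho(\C_i^T\x^{(t)})}\le R=230\sqrt K$, spherical concentration gives $\Pr(\abs{\langle\bb_{i,k},\y^{(t)}\rangle}\ge\tfrac12)\le 2\exp(-c m/R^2)=2\exp(-c' m/K)$ for absolute constants $c,c'$. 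Union bounding over the at most $nK(T+1)\le n^2K^2$ (exposure, iterate) pairs --- each bounded conditionally on the history, so an ordinary union bound applies --- the probability that \eqref{eq:small_prod} is ever violated is at most $2n^2K^2\exp(-c'm/K)$. Since the hypothesis $d\ge c_0 n^3K^2\log(n^2K^2/\delta)$ gives $m\ge d/n-nK\ge \tfrac{c_0}{2}n^2K^2\log(n^2K^2/\delta)$ (the $nK$ term being negligible), this bound is at most $\delta$ once $c_0$ exceeds an absolute constant. On the complementary event \eqref{eq:small_prod} holds and the zero-chain collapse was valid at every query, so the lazily-answered transcript coincides with the one $\algo$ produces against the true oracle; hence \eqref{eq:small_prod} holds for the original process with probability at least $1-\delta$.

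The only genuinely delicate point is the adaptivity: the sets $\mathcal U_i^{(t)}$ --- which columns count as ``undiscovered'' at step $t$ --- are themselves functions of the random instance, so one cannot union bound over a predetermined family of (iterate, column) pairs. The lazy-exposure coupling is what tames this: it fixes a \emph{deterministic} budget of $nK$ exposure events, and the zero-chain reduction guarantees that each freshly exposed column is independent of exactly the iterates it must be tested against. The remaining tasks --- checking that a neighbourhood in the $\bb_{i,\cdot}$-inner-product coordinates pulls back to a neighbourhood of $\x^{(t)}$ in $\R^d$, that the $\B$-free quadratic regularizer never interferes with the collapse, and pinning down the spherical-concentration constants --- are routine.
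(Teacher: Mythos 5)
Your proof is correct, and it takes a genuinely different route from the paper's. The paper's argument (following \citet{fang:spider} and \citet{carmon:lower:i}) works directly with the true oracle process and conditions on a finer auxiliary event $G^t$: it controls $\abs{\langle\u,\P_i^{(t-1)\bot}\y_i^{(t)}\rangle} < a\,\norm{\P_i^{(t-1)\bot}\y_i^{(t)}}$ at the fine threshold $a=\min\{1/(3(T+1)),\,1/(2(1+\sqrt{3T})R)\}$, where $\P_i^{(t-1)}$ projects onto the span of the past transformed iterates \emph{and} the discovered columns; a Gram--Schmidt induction then upgrades $G^{\leq T}$ to the $1/2$-threshold statement via $\norm{\P_i^{(t-1)}\u}\leq\sqrt{3T}\,a$, and a separate rotation-invariance computation (Lemma~\ref{appendix:sublemma} plus the density calculation) is needed to show that, conditioned on $G^{<t}$, the undiscovered columns remain uniform on a residual subsphere. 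Your lazy-exposure coupling sidesteps all of this. By revealing $\bb_{i,k}$ only at the $k$th query to $i$, the iterates become measurable in $(\xi,\text{prior exposures})$, so each fresh column is conditionally Haar-uniform on the orthogonal complement of the columns exposed so far, \emph{and} those are exactly the iterates it must be tested against; you can then apply spherical concentration directly to $\langle\bb_{i,k},\y_i^{(t)}\rangle$ at threshold $1/2$, with no auxiliary event, no fine threshold, and no Gram--Schmidt. The resulting dimension requirement is roughly $d\gtrsim nK\log(n^2K^2/\delta)+n^2K$, considerably weaker than the paper's $c_0\,n^3K^2\log(n^2K^2/\delta)$, so your argument in fact yields a sharper version of the lemma (and since the stated hypothesis is only a sufficient condition, the lemma as written also follows). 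Two points deserve a bit more care in a polished write-up: (i) the claim that the column revealed at a given exposure is conditionally uniform on the residual subsphere, given the transcript, requires a short exchangeability/Haar-invariance argument because the exposure order is a stopping-time functional of the transcript; and (ii) the union bound over $nK(T+1)$ (exposure, iterate) pairs should be organised via optional stopping (for each fixed column identity $(i,k)$ and fixed $t$, bound $\Pr\{\abs{\langle\bb_{i,k},\y_i^{(t)}\rangle}\geq 1/2,\ t\leq s_{(i,k)}\}$ by conditioning on the stopped $\sigma$-algebra at the exposure time $s_{(i,k)}$). Both are routine once spelled out, and your sketch already points at them.
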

  To clarify the indexing, let us consider a concrete example. Fix the $10$th iteration and $j\in [n]$. Recall that to produce iterate $\x^{(10)}$ the algorithm has access to the derivative information for the first 10 iterates (up to iterate 9). If $j$ occurs 2 times in $i^{0:9}$, then $\mathcal{U}_j^{(10)} = \{\bb_j^{(3)},\ldots \bb_{j}^{(K)}\}$ and for all those elements the dot product bound (\ref{eq:small_prod}) holds.  Note that what we refer to as ``discovered" columns at iteration $t$ corresponds to the columns of $\B_i$ that are not in $\mathcal{U}_i^{(t)}$.

The key takeaway from Lemma \ref{lemma:fangkey} is that for each index $i\in [n]$ the algorithm needs $K$ queries to that index to learn all columns of $\B_i$. Consequently, the input of the zero-chain $\bar{f}_K$ stays small in absolute terms for the coordinates corresponding to columns in $\mathcal{U}_i^{(t)}$ with high probability. This is good because $\bar{f}_K$'s large-gradient property (Lemma \ref{lemma:original:largegradient}) then makes the gradient of $F^*$ large as well:

 \begin{lemma}
 	\label{lemma:largegradientrand}
 	Let $\{[i^t,\x^{(t)}]\}_{t\in \N}$ be informed by $F^*$ drawn from the distribution in Definition \ref{def:finitesumhard} and let $\delta \in (0,1)$.
 	Then the following holds for some numerical constant $c_0 < \infty$: 
 	if $d \geq c_0 n^3K^2 \log(\frac{n^2K^2}{\delta})$, with probability at least $1-\delta$ we have for all $t \in \{0,\ldots,T\}$
 	$$
 		\norm{\nabla F^*(\x^{(t)})} > \frac{1}{4\sqrt{n}}.
 	$$
 \end{lemma}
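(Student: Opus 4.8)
The plan is to condition on the high-probability event from Lemma~\ref{lemma:fangkey}, so that for every iterate $t\in\{0,\ldots,T\}$ and every index $i\in[n]$ we have $\abs{\langle \mathbf{u},\rho(\C_i^T\x^{(t)})\rangle}<\tfrac12$ for all $\mathbf{u}\in\mathcal{U}_i^{(t)}$. Fix such a $t$. The high-level idea is that since the algorithm has made at most $T=nK/2$ queries total before producing $\x^{(t)}$, by an averaging/pigeonhole argument there must be at least $n/2$ indices $i$ for which $I_i^{t-1}\le K-1$, i.e.\ for which $\mathcal{U}_i^{(t)}$ is nonempty and in particular contains the last column $\bb_{i,K}$. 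For each such $i$, the conditioning gives $\abs{\langle \bb_{i,K},\rho(\C_i^T\x^{(t)})\rangle}<\tfrac12$, meaning the last coordinate of the input $\B_i^T\rho(\C_i^T\x^{(t)})$ fed into $\bar{f}_K$ is smaller than $\tfrac12<1$ in absolute value.

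Next I would lower bound $\norm{\nabla f_i^*(\x^{(t)})}$ for each such ``lagging'' index $i$. Writing $\y_i=\C_i^T\x^{(t)}$, we have $f_i^*(\x)=\bar{f}_K(\B_i^T\rho(\y_i))+\tfrac1{10}\norm{\y_i}^2$, and the gradient is $\nabla f_i^*(\x)=\C_i\big(\,[D\rho(\y_i)]^T\B_i\nabla\bar{f}_K(\B_i^T\rho(\y_i))+\tfrac15\y_i\,\big)$. Since $\C_i$ is orthogonal, taking norms reduces to bounding the Euclidean norm of the bracketed vector. By Lemma~\ref{lemma:original:largegradient}, because the $K$th coordinate of $\B_i^T\rho(\y_i)$ is $<1$ in absolute value, there is a coordinate $\ell\le K$ with the partial derivative of $\bar f_K$ at that point exceeding $1$ in absolute value, so $\norm{\nabla\bar f_K(\B_i^T\rho(\y_i))}>1$. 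One then needs the standard facts that $\rho$ is $1$-Lipschitz-ish with Jacobian of bounded operator norm (in fact $\norm{D\rho}\le 1$), that $\B_i$ has orthonormal columns, and a triangle-inequality argument absorbing the $\tfrac15\y_i$ term, to conclude $\norm{\nabla f_i^*(\x^{(t)})}\ge \kappa$ for some absolute constant $\kappa>0$ (in the construction this works out so that the relevant constant is at least, say, $\tfrac12$ after the $\rho$-Jacobian and regularizer contributions are accounted for, matching the $\tfrac14$ that eventually appears).

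Finally I would aggregate: $\nabla F^*(\x) = \tfrac1n\sum_i \nabla f_i^*(\x)$, and because the $\C_i$ are the columns of an orthogonal matrix, the components $\nabla f_i^*(\x^{(t)})=\C_i(\cdots)$ live in mutually orthogonal subspaces (each $f_i^*$ depends on $\x$ only through $\C_i^T\x$, and $\C_i^T\C_j=0$ for $i\ne j$). Hence $\norm{\nabla F^*(\x^{(t)})}^2=\tfrac1{n^2}\sum_{i=1}^n\norm{\nabla f_i^*(\x^{(t)})}^2\ge \tfrac1{n^2}\cdot\tfrac n2\cdot\kappa^2$, giving $\norm{\nabla F^*(\x^{(t)})}\ge \kappa/\sqrt{2n}$, which with the constants of the construction is at least $\tfrac1{4\sqrt n}$. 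Since $t$ was arbitrary, this holds simultaneously for all $t\le T$ on the event of probability $\ge 1-\delta$.

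The main obstacle I expect is the gradient-norm computation for a single component in the second step: carefully controlling how composition with $\rho$ and the additive $\tfrac1{10}\norm{\cdot}^2$ regularizer degrade the ``large partial derivative'' guarantee of $\bar f_K$ from Lemma~\ref{lemma:original:largegradient}, and verifying that the orthogonal-column structure of $\B_i$ really does preserve the relevant coordinate so that Lemma~\ref{lemma:original:largegradient} applies to $\B_i^T\rho(\y_i)$ with the index $K$ (this is where the ``last column $\bb_{i,K}$ undiscovered'' hypothesis enters). The pigeonhole count ($\ge n/2$ lagging indices) and the orthogonal-decomposition aggregation are routine by comparison; they are essentially bookkeeping with the definitions of $I_i^{t-1}$, $\mathcal{U}_i^{(t)}$ and $\C$.
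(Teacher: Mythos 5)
Your high-level structure matches the paper's proof exactly: condition on the event of Lemma~\ref{lemma:fangkey}, count (via pigeonhole over $T=nK/2$ queries) that at least $n/2$ indices $i$ have nonempty $\mathcal{U}_i^{(t)}$, show each such component $f_i^*$ has a gradient of constant size, and aggregate through the orthogonal decomposition $\C_i^T\C_j=0$. Both the counting and the orthogonal-aggregation steps are the same as the paper's and are fine.

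The gap is exactly where you expected it, and your sketch of how to fill it would not work as written. After obtaining $\|\nabla\bar f_K(\B_i^T\rho(\y_i))\|>1$, you propose to propagate this through the composition with $\rho$ using the fact ``$\|D\rho\|\le 1$.'' But that is a bound in the wrong direction: an operator-norm \emph{upper} bound on $D\rho$ only gives an \emph{upper} bound on $\|(D\rho)^T\nabla\bar f_K\|$, whereas you need a lower bound. Worse, the smallest eigenvalue of $D\rho(\y)$, namely $(1+\|\y\|^2/R^2)^{-3/2}$, can be arbitrarily small when $\|\y\|$ is large, so no uniform contraction-style lower bound is available. You also silently drop half of Lemma~\ref{lemma:original:largegradient}'s conclusion: that lemma produces an index $j\le k$ with \emph{both} $|z_j|<1$ \emph{and} $|\partial_j\bar f_K(\z)|>1$. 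The smallness of $z_j=\langle\bb_{i,j},\rho(\y_i)\rangle$ is essential, because the regularizer contributes $\tfrac15\y_i$ to the gradient, and without control on the $\bb_{i,j}$-projection of $\y_i$ the regularizer could cancel the large partial derivative. The paper's proof works with these two directional facts (small coordinate, large partial derivative in that coordinate) and invokes the computation in \citet{carmon:lower:i}, Lemma~5 (after their Eq.~(14)), which shows precisely that this pair of bounds implies $\|\nabla\hat f_{K;\B_i}(\C_i^T\x^{(t)})\|>\tfrac12$, handling the interaction between $D\rho$ and the $\tfrac1{10}\|\cdot\|^2$ term in a case analysis. Your ``triangle-inequality absorbing the $\tfrac15\y_i$ term'' is a placeholder for this nontrivial calculation, not a proof of it.
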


\subsection{Lower bound for the individual smooth setting}
To derive results for any incarnation of the function classes in Assumption~\ref{assumption:individual}, one can rescale the function and the inputs and use the above lemmas, exploiting the fact that they hold for function-informed processes. The analysis yields:

\begin{theorem}
	\label{theorem:individual_smoothness}
	For any randomized algorithm $\algo$ satisfying Assumption~\ref{assumption:algo}, $p \in \N$, $\Delta$, $L_p$, $\epsilon$ and $
	n \leq c_p \Delta^{\frac{2p}{p+1}} L_p^{\frac{2}{p+1}}\epsilon^{-2}
	$, there exists a dimension $d \leq \tilde{\upp}(n^{\frac{2p-1}{p}}\Delta L_p^{2/p} \varepsilon^{-\frac{2(p+1)}{p}}) \leq \tilde{\upp}(n^2\Delta L_p^2 \varepsilon^{-4})$ and a function $F \in {\mathcal{F}}_p^n(\Delta, L_p)$ such that
	$$
	T_\epsilon(\algo, F) \geq \Omega\left( \left(\frac{L_p}{\hat{\ell}_p}\right)^{\frac{1}{p}} \frac{\Delta {n}^{\frac{p-1}{2p}}}{\epsilon^{\frac{p+1}{p}}} \right),
	$$
	where $\hat{\ell}_p \leq \exp(cp \log p + c)$ for some constant $c < \infty$. For fixed $p$, $c_p$ is also a universal constant.
\end{theorem}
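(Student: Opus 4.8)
The plan is to instantiate the unscaled hard instance $F^*$ from Definition~\ref{def:finitesumhard} with an appropriately chosen $K$, and then rescale it via $G(\x) = a F^*(\x/b)$ for parameters $a,b>0$ so that the resulting $G$ lies in $\mathcal{F}_p^n(\Delta, L_p)$. By Lemma~\ref{obs:informedprocess}, a run of $\algo$ on $G$ gives iterates $\x^{(t)}$ such that $\{[i^t, \x^{(t)}/b]\}$ is informed by $F^*$, so Lemma~\ref{lemma:largegradientrand} applies to $\x^{(t)}/b$: with probability at least $1-\delta$, $\norm{\nabla F^*(\x^{(t)}/b)} > \frac{1}{4\sqrt n}$ for all $t \le T = nK/2$. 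Since $\nabla G(\x) = (a/b)\nabla F^*(\x/b)$, this yields $\norm{\nabla G(\x^{(t)})} > \frac{a}{4 b \sqrt n}$ for all $t \le nK/2$, and hence $T_\epsilon(\algo, G) \ge nK/2$ provided the scaling forces $\frac{a}{4b\sqrt n} > \epsilon$.

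The core of the argument is then the parameter accounting. First I would compute the smoothness and the initial optimality gap of $F^*$: one needs that $\bar f_K$ composed with $\rho$ has a bounded $(p{+}1)$st derivative tensor — this is exactly the place where the $\exp(cp\log p + c)$-type constant $\hat\ell_p$ enters, coming from the interplay of the Gaussian-error-function building block $\Phi$, the bump function $\Psi$, and the $p$th-order chain rule through $\rho$ — and that $F^*(\mathbf 0) - \inf F^*$ is linear in $K$ (the value at $0$ is $O(1)$ per component while the infimum drops by a constant per coordinate of the chain). Writing these as $\norm{\nabla^{p+1} f_i^*} \le \hat\ell_p$ (after accounting for the $\B_i$, $\C_i$ rotations, which are isometries and do not change operator norms) and $F^*(\mathbf 0) - \inf F^* \le c_1 K / n \cdot n = c_1 K$, the constraints $\norm{\nabla^{p+1}G} \le L_p$ and $G(\mathbf 0) - \inf G \le \Delta$ become, after differentiating $G$ $p{+}1$ times, $a/b^{p+1} \cdot \hat\ell_p \le L_p$ and $a \cdot c_1 K \le \Delta$. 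Combined with $a/(4b\sqrt n) > \epsilon$, this is three inequalities in the three unknowns $a$, $b$, $K$; solving (treating $\hat\ell_p, c_1$ as constants) gives $b \asymp (a \hat\ell_p / L_p)^{1/(p+1)}$, then $a \asymp (\epsilon \sqrt n)^{\frac{p+1}{p}} (\hat\ell_p / L_p)^{1/p}$ up to constants, and finally $K \asymp \Delta / (c_1 a) \asymp \Delta (L_p/\hat\ell_p)^{1/p} n^{-\frac{p+1}{2p}} \epsilon^{-\frac{p+1}{p}}$. The lower bound is $nK/2 \asymp \Delta (L_p/\hat\ell_p)^{1/p} n^{1 - \frac{p+1}{2p}} \epsilon^{-\frac{p+1}{p}} = \Delta (L_p/\hat\ell_p)^{1/p} n^{\frac{p-1}{2p}} \epsilon^{-\frac{p+1}{p}}$, as claimed.

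Two side conditions remain. One needs $K \ge 1$ for the construction to be nonvacuous, which translates (using the formula for $K$ above) exactly into the hypothesis $n \le c_p \Delta^{\frac{2p}{p+1}} L_p^{\frac{2}{p+1}} \epsilon^{-2}$; I would verify this algebraically. One also needs a dimension large enough for Lemmas~\ref{lemma:fangkey} and \ref{lemma:largegradientrand}, namely $d \ge c_0 n^3 K^2 \log(n^2 K^2/\delta)$ with $\delta$ a small fixed constant (say $\delta = 1/4$, so that the failure probability plus the complexity-definition slack stays below $1/2$); substituting the value of $K$ gives the stated $d \le \tilde\upp(n^{\frac{2p-1}{p}} \Delta L_p^{2/p} \epsilon^{-\frac{2(p+1)}{p}})$, and the cruder bound $\tilde\upp(n^2 \Delta L_p^2 \epsilon^{-4})$ follows since under the hypothesis on $n$ the exponents only get smaller. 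The dimension $d$ must be divisible by $n$, which is harmless (round up).

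The main obstacle is the higher-order smoothness computation: bounding $\norm{\nabla^{p+1}[\bar f_K \circ (\B_i^T \rho)]}$ uniformly in $K$ with an explicit, $n$- and $\epsilon$-independent constant $\hat\ell_p$. This requires a Faà di Bruno-style expansion controlling all derivatives of $\rho$ (which are bounded because $\rho$ maps into a ball of radius $R = 230\sqrt K$ and its derivatives decay appropriately in $R$), combined with the known bounds on the derivatives of $\Phi$ and $\Psi$ from \citet{carmon:lower:i}, and care that the $\frac{1}{10}\norm{\y}^2$ quadratic term contributes only to the Hessian. Everything else — the rescaling bookkeeping, the probability union bound, the dimension estimate — is routine once this constant is in hand.
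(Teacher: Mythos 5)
Your proposal matches the paper's proof essentially step by step: the rescaling $G(\x)=aF^*(\x/b)$ is identical to the paper's choice $f_i(\x)=\lambda\sigma^{p+1}f_i^*(\x/\sigma)$ (with $a=\lambda\sigma^{p+1}$, $b=\sigma$), your three constraints on smoothness, optimality gap, and gradient norm reproduce the paper's choices of $\lambda$, $\sigma$, and $K$, and the side conditions ($K\ge 1$ giving the bound on $n$, the dimension from Lemma~\ref{lemma:fangkey} with a constant $\delta$) are handled the same way. The one item you flag as the main obstacle --- bounding $\norm{\nabla^{p+1}\hat f_{K;\B_i}}$ uniformly by an $n$- and $\epsilon$-independent $\hat\ell_p$ --- is in fact available off the shelf as Lemma~6 of \citet{carmon:lower:i} (stated in the paper as Lemma~\ref{lemma:carmon:hat:properties}), so no new Fa\`a di Bruno analysis is needed; the paper just cites it and uses Lemma~\ref{lemma:additional:tensorineq} to transfer it through the rotations $\C_i$.
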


Our result is essentially a lower bound of $\Omega\left (  {n}^{\frac{p-1}{2p}}\epsilon^{-\frac{p+1}{p}}\right )$ for fixed $p$, up to constant factors. The increasing dependence on $n$ is consistent with the empirical observation that higher-order methods typically need to employ larger batch sizes (see Section 8.1.3 in \citet{goodfellow:deeplearning_book}), but it could also be an artefact of a not yet perfect analysis.

For second-order algorithms, the best rate with our individual smoothness assumption is achieved by \citet{zhou:svrc}. Their algorithm finds an approximate local minimum in $\tilde{\upp}(n^{4/5}\varepsilon^{-3/2})$ oracle calls. Our lower bound reads as $\low(n^{1/4}\varepsilon^{-3/2})$ for Assumption~\ref{assumption:individual} with $p=2$, which implies that our bound exhibits a rather large $\tilde{\upp}(n^{11/20})$ gap.
\subsection{A new assumption for second-order smoothness}
We point out that a similar $n^{1/2}$ gap is present in the case of $p=1$ \cite{gu:lower}, which remains an open problem. For the first-order setting, a way to get matching bounds is to use the first-order mean-squared smoothness assumption, yielding the optimal $\Theta(\sqrt{n}/\epsilon^{-2})$ oracle complexity \cite{fang:spider}. It has been observed by \citet{gu:lower} that this assumption is sufficient for a variety of first-order methods. This raises a natural question: is there a second-order analogue to mean-squared smoothness? The mean-squared assumption effectively controls the second moment of the random variable that arises when fixing $\x,\y$, drawing $f_i$ at random and considering $\nabla f_i(\x)- \nabla f_i(\y)$. For cubic regularization methods, a natural analogue is the \emph{third} moment of the Hessian difference.

In the following, we will show that one can indeed weaken the assumption of the SVRC algorithm from \citet{zhou:svrc} to Assumption~\ref{assumption:third}.

\begin{algorithm}[h]
	\caption{SVRC \cite{zhou:svrc}}
	\label{algorithm:svrc}
	\begin{algorithmic}
		\STATE {\bfseries Input:} Gradient and Hessian batch sizes $b_g$, $b_h$, cubic penalty parameter $M$, number of epochs $S$ and steps per epoch $T$. Starting point $\x_0$
		\STATE $\widehat{\x}^1 = \x_0$
		\FOR{$s=1$ {\bfseries to} $S$}
		\STATE $\x_0^s = \snap$
		\STATE $\snapgrad = \grdf(\snap)$, $\snaphess = \hesf(\snap)$
		\FOR{$t=0$ {\bfseries to} $T-1$}
		\STATE Sample index sets $I_g, I_h$, with $\abs{I_h} = b_h, \abs{I_g} = b_g$
		\STATE $\estim = \frac{1}{b_g}\sum_{i_t \in I_g} [\grdfi(\curir) - \grdfi(\snap)] + \snapgrad - (\frac{1}{b_g}\sum_{i_t \in I_g} \hesfi (\snap)- \snaphess)(\xdiff)$
		\STATE $\hestim = \frac{1}{b_h}\sum_{j_t \in I_h}[\hesfj(\curir) - \hesfj(\snap)] + \snaphess$
		\STATE $\step = \arg \min_{\mathbf{h}}[\inner{\estim}{\mathbf{h}} + \frac{1}{2}\inner{\hestim \mathbf{h}}{ \mathbf{h}} + \frac{M}{6}\norm{\mathbf{h}}^3]$
		\STATE $\x_{t+1}^s = \curir + \step$
		\ENDFOR
		\STATE $\widehat{\x}^{s+1} = \x^s_T$
		\ENDFOR
		\STATE {\bfseries Input:} $\x_{\mathrm{out}} = \x_t^s$, where $s \in [S]$, $t\in[T]$ are chosen uniformly at random. 
	\end{algorithmic}
\end{algorithm}

\begin{assumption}
	\label{assumption:third}
	We say a function $F = \sum_{i=1}^n f_i$ with $f_i: \R^d \rightarrow \R$ respects the third-moment smoothness assumption with constant $L_2$ if for any $\x, \y \in \R^d$
	$$
	\left( \E_i\norm{\nabla^2 f_i(\x) - \nabla^2 f_i(\y)}^3 \right)^{\frac{1}{3}} \leq L_2 \norm{\x-\y}.
	$$
	The expected value is taken w.r.t. a uniform distriubtion on $[n]$. We also assume $F$ satisfies Assumption~\ref{assumption:individual}~ii).
\end{assumption}
Note that this assumption is weaker than the usual second-order smoothness, but it is stronger than a second moment assumption, due to $\E[\abs{X}^s]^{1/s} \leq \E[\abs{X}^t]^{1/t}$ for $s < t$. Furthermore, through Jensen's inequality, it is easy to observe that $F$ has Lipschitz continuous Hessian, which is one reason why the assumption turns out to be useful. The second one is that error terms for cubic regularization are third powers, so this assumption provides a more natural fit than, say, a mean-squared Lipschitz assumption on the Hessian.

With some minor changes to the convergence analysis, the guarantees of SVRC (to second-order stationarity) can essentially be retained. A full proof is given in Appendix~\ref{appendix:C}.

\begin{theorem}
	\label{theorem:third_moment:upper}
	Let $M = C_M L_2$ for $C_M = 150$. Let the epoch length be $T = \max\{2,n^{1/5}\}$ and the number of epochs $S = \max\{ 1, 240 C_M^2 L_2^{1/2} \Delta n^{-1/5}\epsilon^{-3/2}\}$. Set the batch sizes to $b_g = 5 \max\{n^{4/5}, 2^4\}$ and $b_h = 3000 \max\{4, n^{2/5}\} \log^3 d$. Then SVRC under Assumption \ref{assumption:third} needs
	$$
	\tilde{\upp}\left(n+\frac{\Delta \sqrt{L_2} n^{4/5}}{\epsilon^{3/2}}\right)
	$$
	oracle queries to find a point $\x_{\mathrm{out}}$ such that, in expectation
	\begin{equation}
	\label{eq:mustuff}
	\max \left\{ \norm{\grdf(\x_{\mathrm{out}})}^{3/2}, -\frac{\lambda_\mathrm{min}^3(\hesf(\x_{\mathrm{out}}))}{L_{2}^{3/2}}\right\} \leq \epsilon^{3/2}.
	\end{equation}
	In particular it holds that
	$$
	\E \norm{\nabla F(\x_{\mathrm{out}})} \leq \varepsilon.
	$$
\end{theorem}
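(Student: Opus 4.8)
The plan is to rerun the convergence analysis of SVRC from \citet{zhou:svrc} (Algorithm~\ref{algorithm:svrc}) and verify that wherever it invokes the individual Hessian-Lipschitz property it in fact only needs Assumption~\ref{assumption:third}. That analysis rests on two ingredients: (i) a cubic-regularization descent lemma --- available because $M = C_M L_2$ is chosen large and, by Jensen applied to Assumption~\ref{assumption:third}, $F$ itself has $L_2$-Lipschitz Hessian --- giving for each inner step a bound of the shape
\begin{equation*}
F(\x_{t+1}^s) \le F(\curir) - c_1 M \norm{\step}^3 + \frac{c_2}{\sqrt{M}}\,\norm{\estim - \grdfc}^{3/2} + \frac{c_3}{M^{2}}\,\norm{\hestim - \hesfc}^{3};
\end{equation*}
and (ii) moment bounds on the estimator errors $\estim - \grdfc$ and $\hestim - \hesfc$. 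Only (ii) must be reworked, and this is where the third-moment assumption does its job. The descent lemma itself is obtained from the cubic upper bound $F(\x+\step) \le F(\curir) + \grdfc^\top\step + \tfrac12\step^\top\hesfc\step + \tfrac{L_2}{6}\norm{\step}^3$, replacing $\grdfc,\hesfc$ by $\estim,\hestim$, using the cubic-model optimality $m(\step) \le -\tfrac{M}{12}\norm{\step}^3$, and Young's inequality; none of this changes.

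I would first control the two errors conditionally on $\curir$ (so $\norm{\xdiff}$ is fixed). For the Hessian estimator, since $\snaphess = \hesf(\snap)$ and $\hesf(\curir) - \hesf(\snap) = \E_j[\hesfj(\curir)-\hesfj(\snap)]$, one has $\hestim - \hesfc = \frac{1}{b_h}\sum_{j_t \in I_h}\big([\hesfj(\curir)-\hesfj(\snap)] - \E_j[\hesfj(\curir)-\hesfj(\snap)]\big)$, an average of $b_h$ i.i.d.\ centered symmetric matrices whose operator norm has third moment $\upp(L_2^3\norm{\xdiff}^3)$ --- exactly Assumption~\ref{assumption:third}, the centering being bounded by the same quantity through Jensen. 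A matrix moment (Rosenthal/Bernstein-type) inequality then gives $\E[\,\norm{\hestim - \hesfc}^{3}\mid \curir\,] \le \tilde{\upp}(b_h^{-3/2})\,L_2^3\norm{\xdiff}^3$, which is the origin of the $\log^3 d$ factor in the prescribed $b_h$ and of the polylog in the final count. For the gradient estimator, writing $r_i := \nabla f_i(\curir) - \nabla f_i(\snap) - \nabla^2 f_i(\snap)(\xdiff)$ for the Taylor remainder of $f_i$ at $\snap$, a short computation gives $\estim - \grdfc = \frac{1}{b_g}\sum_{i_t \in I_g}(r_{i_t} - \E_i r_i)$, an average of i.i.d.\ centered vectors; using $r_i = \int_0^1 [\nabla^2 f_i(\snap + \theta(\xdiff)) - \nabla^2 f_i(\snap)](\xdiff)\,\mathrm{d}\theta$, Jensen in $\theta$, and Assumption~\ref{assumption:third} gives $\E_i\norm{r_i}^3 \le \tfrac14 L_2^3\norm{\xdiff}^6$, hence by a vector Marcinkiewicz--Zygmund/Rosenthal inequality $\E[\,\norm{\estim - \grdfc}^{3/2}\mid\curir\,] \le \upp(b_g^{-3/4})\,L_2^{3/2}\norm{\xdiff}^3$.

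With these two estimates the rest follows the SVRC template essentially verbatim. Substituting into the descent lemma, taking expectations, and summing over the $T$ inner steps of an epoch, each noise term contributes $\upp\big(\kappa L_2 \cdot \mathrm{poly}(T)\big)\sum_t \E\norm{\mathbf{h}_t^s}^3$ after bounding $\norm{\xdiff}^3 \le T^2\sum_{\tau<t}\norm{\mathbf{h}_\tau^s}^3$, where $\kappa = \tilde{\upp}(b_g^{-3/4} + b_h^{-3/2})$; the prescribed $T = \max\{2,n^{1/5}\}$, $b_g = \Theta(n^{4/5})$, $b_h = \tilde{\Theta}(n^{2/5})$ make this at most $\tfrac12 c_1 M\sum_t\E\norm{\mathbf{h}_t^s}^3$, so each epoch produces a net decrease proportional to $M\sum_t\E\norm{\mathbf{h}_t^s}^3 \ge 0$; it is convenient here to track the potential $F(\curir) + \upp(\norm{\xdiff}^3)$ so that the correction resets at each epoch start ($\x_0^s = \snap$). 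Telescoping over the $S$ epochs and using $F(\x_0) - \inf F \le \Delta$ yields $\sum_{s,t}\E\norm{\step}^3 \le \upp(\Delta/M)$. Finally, the first- and second-order optimality conditions of the cubic model ($\nabla m(\step) = 0$ and $\hestim \succeq -M\norm{\step}\I$) together with the $L_2$-Lipschitzness of $\grdf,\hesf$ and the estimator-error bounds give $\max\{\norm{\grdf(\x_{t+1}^s)}^{3/2},\, -\lambda_{\mathrm{min}}^3(\hesf(\x_{t+1}^s))/L_2^{3/2}\} \le \upp(C_M^3 L_2^{3/2})\norm{\step}^3 + (\text{the same noise terms})$; averaging over the uniformly random output index, inserting the bound on $\sum_{s,t}\E\norm{\step}^3$ and the value of $S$, makes the right-hand side $\le \epsilon^{3/2}$, which is \eqref{eq:mustuff}, and then $\E\norm{\nabla F(\x_{\mathrm{out}})} \le (\E\norm{\nabla F(\x_{\mathrm{out}})}^{3/2})^{2/3} \le \epsilon$ by Jensen. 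The oracle count is $S\big(n + T(b_g + b_h)\big) = \tilde{\upp}(Sn) = \tilde{\upp}\big(n + \Delta\sqrt{L_2}\,n^{4/5}\epsilon^{-3/2}\big)$.

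I expect the main obstacle to be the Hessian-estimator moment bound: Assumption~\ref{assumption:third} controls only a third moment of $\norm{\nabla^2 f_i(\x)-\nabla^2 f_i(\y)}$, not an almost-sure or sub-exponential bound, so matrix Bernstein is not directly applicable and one must use a matrix moment inequality (or truncate and union-bound), tracking the dimension-dependent logarithms carefully enough to justify $b_h = \tilde{\Theta}(n^{2/5})$. The secondary difficulty is pure bookkeeping --- checking that $C_M = 150$ and the potential coefficient can be fixed once and for all so that every error term is dominated uniformly over the admissible ranges of $n,\Delta,L_2,\epsilon$ --- which is exactly the part of \citet{zhou:svrc}'s argument that is sensitive to the switch from individual to third-moment smoothness, and is carried out in Appendix~\ref{appendix:C}.
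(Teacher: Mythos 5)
Your proposal is correct and follows essentially the same route as the paper's proof in Appendix~C: the cubic descent inequality, the two estimator moment bounds (a second/third-moment bound on the Taylor remainder for the gradient estimator, and a Rosenthal-type matrix moment inequality for the Hessian estimator that forces the $\log^3 d$ in $b_h$ because the $\E\max_j\norm{Y_j}^3$ term must be bounded by a sum), the Lyapunov potential $F(\x_t^s)+c_t\norm{\x_t^s-\widehat{\x}^s}^3$ telescoped over inner steps and epochs, and finally averaging over the random output index and applying Jensen. Minor cosmetic differences only: you bound $\E_i\norm{r_i}^3$ directly and invoke Marcinkiewicz--Zygmund, whereas the paper gets away with a second-moment bound (Lemma~\ref{lemma:modnesterov}) plus Lemma~\ref{lemma:svrc31}; and you replace the paper's recurrence with the explicit bound $\norm{\x_t^s-\widehat{\x}^s}^3\le T^2\sum_{\tau<t}\norm{\mathbf{h}_\tau^s}^3$, which amounts to the same bookkeeping.
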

Note that if $\x$ satisfies \eqref{eq:mustuff}, then $\x$ is an approximate local minimum of $F$ \footnote{This is a point such that $\norm{\nabla F(\x)} \leq\epsilon$ and $\lambda_{\mathrm{min}}(\nabla^2F(\x)) \geq -\sqrt{L_2\epsilon}$}. If one compares this theorem to Theorem 6 and Corollary 9 in \citet{zhou:svrc}, one notices that the minimum batch size is larger by a polylogarithmic factor. This is indeed due to the fact that under the new smoothness assumption, bounding the maximum Hessian difference can only be done through bounding the sum, unlike before. It seems possible that this dependency can be removed by using more suitable moment inequalities for matrices than the ones proposed in the original proof. 

What is now left to do is to provide a tighter lower bound. Indeed, the following holds:
\begin{theorem}
	\label{theorem:third_moment:lower}
	For any randomized algorithm $\algo$ satisfying Assumption~\ref{assumption:algo}, $\Delta$, $L_2$, $\epsilon$, and $n \leq \frac{c\Delta^{12/7} L_2^{6/7}}{\epsilon^{18/7}}$ there exists a dimension $d \leq \tilde{\upp}(n^2\Delta L_2 \varepsilon^{-3})$ and a function $F = \frac{1}{n}\sum_{i=1}^{n}f_i$ that satisfies Assumption \ref{assumption:third} such that
	$$
	T_\epsilon(\algo, F) \geq \Omega\left( \frac{L_2^{1/2}\Delta n^{5/12}}{\epsilon^{\frac{3}{2}}} \right),
	$$
	where the constants hidden by $\Omega$ do not depend on $\varepsilon$ or $n$. $c$ is also a universal constant.
\end{theorem}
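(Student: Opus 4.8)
The plan is to reuse the hard distribution $F^*$ from Definition~\ref{def:finitesumhard} without change and to exploit the fact that Assumption~\ref{assumption:third} is strictly weaker than the $p=2$ case of Assumption~\ref{assumption:individual}. That slack lets us scale each component more aggressively than in the proof of Theorem~\ref{theorem:individual_smoothness}, and it is precisely this extra scaling that upgrades the $n$-dependence from $n^{1/4}$ to $n^{5/12}$. The heavy lifting (the small-inner-product statement of Lemma~\ref{lemma:fangkey}, the gradient lower bound of Lemma~\ref{lemma:largegradientrand}, and the rescaling device of Lemma~\ref{obs:informedprocess}) is already in place; only the smoothness accounting is new.

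First I would quantify the smoothness gain. In Definition~\ref{def:finitesumhard}, $f_i^*(\x)=\hat f_{K;\B_i}(\C_i^T\x)$ depends on $\x$ only through the block $\C_i^T\x$, and the blocks $\C_1^T\x,\dots,\C_n^T\x$ are the coordinate blocks of $\C^T\x$ in an orthonormal basis, so $\sum_{i}\norm{\C_i^T\z}^2=\norm{\z}^2$. Since $\bar f_K\circ\rho$ has globally bounded third derivative \citep{carmon:lower:i} and the term $\tfrac1{10}\norm{\cdot}^2$ has constant Hessian, each $f_i^*$ obeys $\norm{\nabla^2 f_i^*(\x)-\nabla^2 f_i^*(\y)}\le \hat\ell\,\norm{\C_i^T(\x-\y)}$ for a universal constant $\hat\ell$. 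Combining this with the elementary inequality $\sum_i a_i^3\le(\max_i a_i)\sum_i a_i^2\le(\sum_i a_i^2)^{3/2}$ for $a_i\ge 0$ gives
$$
\Big(\tfrac1n\textstyle\sum_{i=1}^n\norm{\nabla^2 f_i^*(\x)-\nabla^2 f_i^*(\y)}^3\Big)^{1/3}\le \frac{\hat\ell}{n^{1/3}}\,\norm{\x-\y},
$$
so $F^*$ satisfies the third-moment assumption with constant $\hat\ell\,n^{-1/3}$ — a factor $n^{1/3}$ below the per-component Hessian-Lipschitz constant. This is the one genuinely new estimate.

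Next I would rescale and apply the existing lemmas. For parameters $\lambda,\sigma>0$ and $K\in\N$ to be fixed, set $f_i(\x)=\lambda\sigma^{3}f_i^*(\x/\sigma)$, so $F=\tfrac1n\sum_i f_i=\lambda\sigma^{3}F^*(\cdot/\sigma)$; by the display above $F$ satisfies Assumption~\ref{assumption:third} with constant $\hat\ell\,\lambda\,n^{-1/3}$, and choosing $\lambda=\Theta(n^{1/3}L_2)$ makes it equal $L_2$. By Lemma~\ref{obs:informedprocess}, if $\algo$ on $F$ produces $\{\x^{(t)}\}$ then $\{\x^{(t)}/\sigma\}$ is informed by $F^*$, so Lemma~\ref{lemma:largegradientrand} (which forces $d=\Omega(n^3K^2\log(n^2K^2/\delta))$) yields, with probability $\ge 1-\delta$,
$$
\norm{\nabla F(\x^{(t)})}=\lambda\sigma^{2}\,\norm{\nabla F^*(\x^{(t)}/\sigma)}>\frac{\lambda\sigma^{2}}{4\sqrt n}\qquad\text{for all }t\le T:=\tfrac{nK}{2}.
$$
Moreover the initial optimality gap of $F$ equals $\lambda\sigma^{3}\big(F^*(\mathbf 0)-\inf F^*\big)=O(\lambda\sigma^{3}K)$, using $\bar f_K(\mathbf 0)-\inf\bar f_K=O(K)$ from \citet{carmon:lower:i}.

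Finally I would balance the three free parameters. Imposing $\tfrac{\lambda\sigma^{2}}{4\sqrt n}>\epsilon$ (no queried iterate is $\epsilon$-stationary before step $T$) and $\lambda\sigma^{3}K=O(\Delta)$ forces $\sigma^{2}=\Theta(\sqrt n\,\epsilon/\lambda)$ and hence $K=\Theta\big(\Delta\lambda^{1/2}n^{-3/4}\epsilon^{-3/2}\big)$; taking $K$ at this value and recalling $\lambda=\Theta(n^{1/3}L_2)$ gives
$$
T_\epsilon(\algo,F)\ \ge\ \frac{nK}{2}\ =\ \Omega\!\left(\frac{n^{1/4}\Delta\lambda^{1/2}}{\epsilon^{3/2}}\right)\ =\ \Omega\!\left(\frac{L_2^{1/2}\,\Delta\,n^{5/12}}{\epsilon^{3/2}}\right),
$$
after choosing $\delta$ a small constant so that $1-\delta>\tfrac12$ in the definition of $T_\epsilon$. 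The constraint $K\ge 1$ reads $\Delta\lambda^{1/2}\gtrsim n^{3/4}\epsilon^{3/2}$, which with $\lambda=\Theta(n^{1/3}L_2)$ is exactly $n\le c\,\Delta^{12/7}L_2^{6/7}\epsilon^{-18/7}$; substituting the chosen $K$ into the dimension requirement of Lemma~\ref{lemma:fangkey} yields the stated bound on $d$.

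I expect the main obstacle to be the first step: certifying that the aggressively scaled instance still obeys Assumption~\ref{assumption:third} \emph{globally} — for all $\x,\y$, not merely infinitesimally — which rests on the global third-derivative bound for $\bar f_K\circ\rho$ and on carefully propagating the associated universal constant ($\hat\ell$, playing the role of $\hat\ell_2$) through the rescaling. The subsequent parameter balancing is routine but must be done precisely to land the exponent $5/12$ and the exact admissible range of $n$.
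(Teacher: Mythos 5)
Your proposal is correct and follows essentially the same route as the paper's proof. The one "new estimate" you identify — that the orthogonality of the blocks $\C_i^T\x$ combined with the elementary bound $\sum_i a_i^3 \le (\sum_i a_i^2)^{3/2}$ buys a factor of $n^{1/3}$ in the third-moment constant — is exactly the key step the paper uses (written there as $\sum_i\norm{\C_i^T\z}^3 \le \sum_i\norm{\C_i^T\z}^2\norm{\C^T\z}=\norm{\C^T\z}^3$), and it is applied to the same scaled instance; the only cosmetic difference is that the paper folds the $n^{1/3}$ gain into an explicit $\sqrt[3]{n}$ prefactor in the definition of $f_i$ (so that $\lambda=L_2/\hat\ell_2$), whereas you keep the prefactor out and compensate by taking $\lambda=\Theta(n^{1/3}L_2)$. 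Your parameter balancing then reproduces the paper's choices of $\sigma$, $K$, the admissible range of $n$, and the $n^{5/12}$ exponent, and the invocations of Lemmas~\ref{obs:informedprocess}, \ref{lemma:fangkey}, and \ref{lemma:largegradientrand} are used as intended.
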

Note the $n^{1/6}$ difference when compared to Theorem~\ref{theorem:individual_smoothness}. The reason for this is that the tall orthogonal matrices $\C_i$ used in the construction allow a function satisfying Assumption~\ref{assumption:individual} to be scaled by $\sqrt[3]{n}$ and still respect Assumption~\ref{assumption:third}. With this, the $\sqrt{L_2}$ dependence of the lower bounds in Theorem~\ref{theorem:individual_smoothness} explains this $n^{1/6}$ difference.

So -- to conclude -- under Assumption~\ref{assumption:third} and $p=2$, one can find an $\epsilon$-approximate local minimum in $\tilde{\upp}(n^{4/5}\epsilon^{-3/2})$ oracle queries while the lower bound lies at $\Omega(n^{5/12}\epsilon^{-3/2})$. While the gap remains at $\Omega(n^{23/60})$, this is a notable improvement over the results for Assumption~\ref{assumption:individual}, which means that the third-moment smoothness assumption gets us closer to understanding the fundamental limits for higher-order variance-reduced methods.
\section{Discussion}
In this work, we have analyzed the oracle complexity of higher-order smooth non-convex finite-sum optimization. We have shown that speedup (e.g. through variance reduction) in the non-convex case, as in the convex case, requires randomization. 

For randomized algorithms, the picture remains unclear: while we are able to show non-trivial lower bounds -- i.e. $n$ does not vanish, unlike in the $p=1$ case -- our bounds are not tight. The gaps that remain to be closed are of similar approximate $\tilde{\upp}(\sqrt{n})$ magnitude for first and second-order algorithms and considering a moment-based smoothness assumption yields tighter bounds in both cases. It remains unclear whether these smoothness assumptions are equivalent for algorithmic purposes, or if individual smoothness is stronger than mean-squared/third-moment smoothness.

Algorithmic results for gradient based algorithms seem to either indicate a failure to exploit that \emph{every} component is smooth or hint at the fact that the lower bound results from Theorem~\ref{theorem:individual_smoothness} and \citet{gu:lower} could be improved for all orders of smoothness.

 There are a few directions of improvement for the specific problem of second-order algorithms we would like to mention. Firstly, there may be different models that allow to better characterize optimal oracle complexities. Indeed, some of the most recent algorithms from Section~\ref{sec:higherorder_algo} prioritize Hessian complexity, and achieve a complexity of $\tilde{\upp}(\sqrt{n}/\epsilon^{3/2})$ at the cost of more gradient queries. It would be interesting to derive lower bounds for a setting where gradient and Hessian complexities are counted separately, perhaps traded off in a flexible way. Secondly, it is plausible that a stronger lower bound can be achieved by analyzing SOSPs instead of FOSPs, as it is the typical guarantee. However, we do not believe that this is the key challenge, because the underlying issues to obtain stronger bounds seem to be the same for both first- and second-order methods.

In any case, further research is needed to fully understand the achievable oracle complexities of variance reduced methods.

\paragraph{Acknowledgments}
We are grateful to anonymous reviewers for their helpful comments.

\bibliography{refs}
\bibliographystyle{apalike}

\newpage
\setcounter{section}{0}
\renewcommand{\thesection}{\Alph{section}}

% will have to be manually adapted after final version of main text is compiled

% labels referring to the assumptions from the preliminaries section
\newcommand{\assfunc}{2.1}
\newcommand{\assoracle}{2.2}
\newcommand{\assalgo}{2.3}
%\newcommand{\defconplex}{2.4}

% labels referring to the section on deterministic lower bounds
\newcommand{\defdeterfamily}{3.1}
\newcommand{\defdeterinstance}{3.2}
\newcommand{\lemdeterconsistency}{3.3}
\newcommand{\lemoriginallargegrad}{3.4}
\newcommand{\lemdeterlargegrad}{3.5}
\newcommand{\theodeter}{3.6}

\newcommand{\defrandrobust}{4.1}
\newcommand{\defrandhard}{4.2}
\newcommand{\defrandinformed}{4.3}
\newcommand{\lemmainformed}{4.4}
\newcommand{\lemmakey}{4.5}
\newcommand{\lemmagradbound}{4.6}
\newcommand{\theorand}{4.7}
\newcommand{\assthird}{4.8}
\newcommand{\theothirdup}{4.9}
\newcommand{\theothirdlow}{4.10}

\section{Lower bounds for deterministic algorithms}
The appendix is structured in 4 parts. Appendix A provides all omitted proofs for Section~3, while Appendix B provides the same for Section~4, up to the end of 4.1. In Appendix C we give the proofs for Theorems \theothirdup{} and \theothirdlow{}. Finally Appendix D contains the proof of a simple observation that is needed for all constructions.

\newcommand{\fdet}{f_{K,\ddelta}}
\renewcommand{\v}{\mathbf{v}}
\newcommand{\seqgam}[1]{#1_{j+\gamma_1}\cdots #1_{j+\gamma_p} #1_j}

\renewcommand\ddelta{\bm{\delta}}
\newcommand\ggamma{\bm{\gamma}}

\subsection{Proof of Theorem \theodeter}
Along with Lemma~\lemdeterlargegrad{}, we need the following result that will allow us to ensure $F$ satisfies Assumption~\assfunc{}.
\begin{lemma}
\label{lemma:deterministic:properties}
For all $K$ and $\ddelta \in \{0,1\}^K$, the function $f_{K,\ddelta}$ from Definition~\defdeterfamily{} satisfies
\begin{enumerate}[i)]
\item The initial sub-optimality can be bounded by $f_{K,\ddelta}(\mathbf{0}) - \inf_{\x \in \R^K}f_{K,\ddelta}(\x) \leq 12K$.
\item The function is $p$-th order $\ell_p$-smooth with $\ell_p \leq \exp(\frac{5}{2}p \log p + cp)$ for some numerical constant $c < \infty$.
\end{enumerate}
\end{lemma}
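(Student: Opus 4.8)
The plan is to handle the two parts separately, in each case reducing to elementary pointwise estimates on the one-dimensional building blocks $\Psi$ and $\Phi$; the $\ddelta$-dependence plays essentially no role since each $\delta_k\in\{0,1\}$ only removes summands.

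\textbf{Part (i).} First evaluate at the origin: since $\Psi(0)=0$ and $\Psi(1)=1$, every summand carrying a factor $\Psi(\pm 0)$ drops out and $f_{K,\ddelta}(\mathbf{0}) = -\delta_1\Phi(0)\leq 0$, using $\Phi\geq 0$. For the infimum I would use the crude global bounds $0\leq\Psi\leq e$ and $0\leq\Phi\leq\sqrt{2\pi e}$ (the former since $(2x-1)^{-2}\to 0$ as $x\to\infty$, the latter by extending the Gaussian integral to all of $\R$), and note $e\sqrt{2\pi e}<12$. Then each of the $K$ additive pieces of $f_{K,\ddelta}$ — the single term $-\delta_1\Phi(x_1)$ and the $K-1$ bracketed terms $\delta_k[\Psi(-x_{k-1})\Phi(-x_k)-\Psi(x_{k-1})\Phi(x_k)]$ — is bounded below by $-12$, so $f_{K,\ddelta}(\x)>-12K$ for all $\x$, and part (i) follows.

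\textbf{Part (ii).} Note first that $\Psi$ and $\Phi$ are $C^\infty$ (all derivatives of $\Psi$ vanish at $1/2$), hence so is $f_{K,\ddelta}$, and by the remark following Assumption~\assfunc{}, $p$th-order $\ell_p$-smoothness is equivalent to $\sup_\x\|\nabla^{p+1}f_{K,\ddelta}(\x)\|\leq\ell_p$ in operator norm. Two ingredients enter. (a) The scalar derivative bounds $\sup_x|\Psi^{(j)}(x)|,\sup_x|\Phi^{(j)}(x)|\leq\kappa_j$ with $\kappa_j\leq\exp(\tfrac52 j\log j + c_0 j)$, which I would cite from \citet{carmon:lower:i}: for $\Psi$ one writes $\Psi=e\exp(-(2x-1)^{-2})$, whose $j$th derivative is a rational function of $(2x-1)^{-1}$ times that exponential, and bounds its supremum; the derivatives of $\Phi$ are a Hermite polynomial times a Gaussian and are smaller. (b) The ``tridiagonal'' structure: writing $f_{K,\ddelta}=\sum_{k=1}^K h_k$ with $h_k$ depending only on the coordinates $\{k-1,k\}$, we get for a unit vector $\v$ that $\nabla^{p+1}f_{K,\ddelta}(\x)[\v^{\otimes(p+1)}]=\sum_k\nabla^{p+1}h_k(\x)[\v^{\otimes(p+1)}]$, and Leibniz's rule together with the $\kappa_j$ bounds gives $|\nabla^{p+1}h_k(\x)[\v^{\otimes(p+1)}]|\leq 2A_p(|v_{k-1}|+|v_k|)^{p+1}$ with $A_p:=\max_{0\leq j\leq p+1}\kappa_j\kappa_{p+1-j}$ (and $\leq\kappa_{p+1}$ for the $k=1$ term). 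Summing over $k$ and using $(|v_{k-1}|+|v_k|)^{p+1}\leq 2^{p+1}(v_{k-1}^2+v_k^2)^{(p+1)/2}\leq 2^{p+1}(v_{k-1}^2+v_k^2)$ (valid since $v_{k-1}^2+v_k^2\leq\|\v\|^2=1$ and $(p+1)/2\geq 1$) together with $\sum_k(v_{k-1}^2+v_k^2)\leq 2\|\v\|^2=2$, I obtain $\|\nabla^{p+1}f_{K,\ddelta}(\x)\|\leq 2^{p+3}A_p+\kappa_{p+1}=:\ell_p$.

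It then remains to see that this $\ell_p$ meets the claimed bound, and here is the one delicate point: bounding $A_p$ naively by $\kappa_{p+1}^2$ would \emph{double} the leading constant. Instead I would use $\kappa_j\kappa_{p+1-j}\leq\exp\!\big(\tfrac52[j\log j+(p+1-j)\log(p+1-j)]+c_0(p+1)\big)$ and the convexity of $x\mapsto x\log x$, so that the bracket is maximized at an endpoint and equals $(p+1)\log(p+1)$; hence $A_p\leq\exp(\tfrac52(p+1)\log(p+1)+c_0(p+1))$ and, absorbing the $2^{p+3}$ factor and the difference $(p+1)\log(p+1)-p\log p$ into a linear term, $\ell_p\leq\exp(\tfrac52 p\log p+cp)$ for a numerical constant $c$. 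I expect this convexity observation — keeping the leading constant at $5/2$ when the $p+1$ derivatives are split across the two factors of a product — to be the only non-routine step; the scalar derivative estimate of \citet{carmon:lower:i} is the other substantive input, but it is citable.
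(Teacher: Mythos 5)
Your part (i) matches the paper's proof essentially verbatim: evaluate at the origin using $\Psi(0)=0$, then bound each of the $K$ summands below by $-e\sqrt{2\pi e}>-12$ via the global bounds on $\Psi,\Phi$. Part (ii) is correct but takes a genuinely different route. The paper works with the directional derivative $h_{\x,\v}^{(p+1)}(0)$, expands it as a sum indexed by offset patterns $\ggamma\in\{0,1\}^p\cup\{0,-1\}^p$ and base index $j$, bounds each partial derivative by $4\sup|\Psi^{(k)}|\sup|\Phi^{(p+1-k)}|$, and then closes the sum over $j$ by invoking $|\sum_j v_{j+\gamma_1}\cdots v_{j+\gamma_p}v_j|\leq 1$ for unit $\v$, which it cites from \citet{carmon:lower:i} (Appendix B.2); the $2^{p+1}$ in $\ell_p$ counts the $\ggamma$'s. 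You instead decompose $f_{K,\ddelta}=\sum_k h_k$ into pieces supported on adjacent coordinate pairs, apply the binomial theorem to get the local bound $2A_p(|v_{k-1}|+|v_k|)^{p+1}$, and replace the Carmon inequality by the elementary chain $(|v_{k-1}|+|v_k|)^{p+1}\leq 2^{p+1}(v_{k-1}^2+v_k^2)$ together with $\sum_k(v_{k-1}^2+v_k^2)\leq 2$. Both routes land on the same asymptotic $\ell_p\leq\exp(\tfrac52 p\log p+cp)$, and in both the $5/2$ leading constant is preserved only because the $p+1$ derivatives falling on a product are split between the two factors; you make this explicit via the convexity of $x\mapsto x\log x$, whereas the paper buries it in the $4\sqrt{2\pi e}\cdot e^{2.5(p+1)\log(4(p+1))}$ estimate (where the $k$ maximizing the product is an endpoint). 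Your version is more self-contained (it does not need the $|\sum_j\seqgam{v}|\leq 1$ lemma) at the cost of slightly looser constants from the crude $2^{p+1}$, which is harmless here. One cosmetic point worth recording: your ansatz $\kappa_j\leq\exp(\tfrac52 j\log j+c_0 j)$ fails at $j=0$ (one has $\kappa_0=\sqrt{2\pi e}>1$), so $\kappa_0$ should be handled separately or by inflating $c_0$; this is absorbed into the $e^{cp}$ term and does not affect the conclusion.
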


\begin{proof}{of Theorem~\theodeter}
At this point, we are ready to proceed with our argument. Recall Definition \defdeterinstance{} of the hard instance $F = \frac{1}{n}\sum f_i$ with $\V \in \ortho(d, K+1)$:
$$
   f_i(\x) = \lambda \sigma^{p+1} f_{K+1,\boldsymbol\delta_i}(\V^T\x / \sigma).
$$
We will guarantee smoothness through $\lambda$, bound the gradient norm from below through $\sigma$ and finally control the distance to optimality with $K$.
By Lemma~\ref{lemma:additional:tensorineq} and Lemma~\ref{lemma:deterministic:properties}ii), we can write for any $\x, \y \in \R^d$
\begin{align*}
 \norm{\nabla^p  f_i(\x) - \nabla^p  f_i(\y)} &\leq \lambda \ell_p \norm{\V^T(\x-\y)} \leq \lambda \ell_p \norm{\x-\y},
\end{align*}
where the second inequality follows because $d\geq K+1$ and because we can complete $\V$ to be a square orthogonal matrix. We see that the choice $\lambda = L_p/\ell_p$ guarantees $p$th-order smoothness with constant $L_p$.

Next, we will turn to bounding the gradient from below.
By Lemma~\lemdeterlargegrad{}, we can lower bound $\norm{\nabla F(\x^{(t)})} > \frac{\lambda \sigma^p}{4}$ for all iterates up to the end of round $K+1$. We desire a lower bound for $\epsilon$-stationarity, so we will choose $\sigma = \left (\frac{4\varepsilon \ell_p}{L_p} \right)^{\frac{1}{p}}$. 

As a last step, we will choose $K$ such that the initial gap on suboptimality is bounded by $\Delta$. For that, we use Lemma~\ref{lemma:deterministic:properties}i).
We want
\begin{align*}
    F(0) - \inf_{\x \in \R^d} F(\x) &\leq \frac{1}{n}\sum_{i=1}^n \left[ f_i(\mathbf{0}) - \inf_{\x \in \R^d} f_i(\x) \right] \\&\leq \frac{\lambda \sigma^{p+1}}{n}\sum_{i=1}^n \left[ f_{K+1,\boldsymbol\delta_i}(\mathbf{0}) - \inf_{\y \in \R^{K+1}}f_{K+1,\boldsymbol\delta_i}(\y)  \right] \\&\leq 12 \lambda \sigma^{p+1} (K+1) \leq \Delta.
\end{align*}
As a larger value of $K$ yields a better bound, we can choose 
$$
    K+1 = \left \lfloor \frac{\Delta}{192} \left( \frac{L_p}{\ell_p}\right)^{\frac{1}{p}} \frac{1}{\epsilon^{\frac{p+1}{p}}} \right \rfloor \leq \frac{\Delta}{12} \frac{\ell_p}{L_p}  \left (\frac{L_p}{4\varepsilon \ell_p} \right)^{\frac{p+1}{p}}.
$$
Because $K$ is the number of rounds and each round consists of $\Omega(n)$ queries, this yields a $\Omega \left( \left(\frac{L_p}{\ell_p}\right)^{1/p}\frac{\Delta n}{\varepsilon^{(p+1)/p}}\right)$ lower bound, as desired. As explained in the proof of Lemma~\lemdeterconsistency{}, the dimension $d$ must merely be larger than the sum of the lower bound and the number of rounds, i.e. linear in the lower bound. This completes the proof.
\end{proof}

\subsection{Proof of technical lemmas for the deterministic setting}
\begin{proof}{of Lemma \lemdeterlargegrad{}}
By Lemma~\lemdeterconsistency{}, we have $(\V^T \x^{(t)})_{K+1} = 0$ for all iterates up until the end of round $K+1$ and will therefore be able to apply Lemma~\lemoriginallargegrad{}. We use $\tilde{\nabla}$ to denote the gradient with respect to $\V^T\x/\sigma$ and write
\begin{align*}
 \lambda \sigma^{p+1} \tilde{\nabla} \left[ \frac{1}{n} \sum_{i=1}^n f_{K+1,\boldsymbol\delta_i}(\V^T\x/\sigma) \right] = \frac{1}{n}\left \lceil \frac{n}{2} \right \rceil \lambda \sigma^{p+1} \tilde{\nabla} \left[ \bar{f}_{K+1}(\V^T\x/\sigma) \right]. \\
\end{align*}
Using the chain rule, we see that
$$
\nabla F(\x) = \frac{1}{n}\left \lceil \frac{n}{2} \right \rceil \lambda \sigma^{p} \V \tilde{\nabla} \left[ \bar{f}_{K+1}(\V^T\x/\sigma) \right], \\
$$ and thus by Lemma~\lemoriginallargegrad{} and by the fact that $\V^T \V = \mathbf{I}_{K+1}$
$$\norm{\nabla F(\x)} \geq \frac{\lambda \sigma^p}{4}.$$
\end{proof}

To show Lemma~\ref{lemma:deterministic:properties}, we need the following technical result, which is a subset of Lemma~1 in \citet{carmon:lower:i}.

\begin{lemma}
\label{carmon:technical_lemma}
For the functions from Definition~\defdeterfamily{} we have
\begin{enumerate}[i)]
\item Both $\Psi$ and $\Phi$ are infinitely differentiable, and for all $q \in \N$ we have
$$
    \sup_x \, \lvert \Psi^{(q)}(x) \rvert  \leq \exp \left( \frac{5q}{2} \log (4q) \right)  
    \quad  \text{and} \quad 
    \sup_x \, \lvert \Phi^{(q)}(x) \rvert  \leq \exp \left( \frac{3q}{2} \log \frac{3q}{2} \right).
$$
\item The functions and derivatives $\Psi$, $\Psi'$, $\Phi$, $\Phi'$ are non-negative and bounded, with
$$
    0 \leq \Psi < e, \quad 0 \leq \Psi' \leq \sqrt{54/e}, \quad 0 < \Phi < \sqrt{2\pi e} \quad \text{and} \quad 0 < \Phi' \leq \sqrt{e}.
$$
\end{enumerate}
\end{lemma}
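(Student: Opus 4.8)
The statement is a subset of Lemma~1 of \citet{carmon:lower:i}, and the plan is to reprove the two building blocks $\Psi$ and $\Phi$ separately, the bulk of the effort being a bookkeeping of derivatives. For $\Psi$: on $(-\infty,1/2)$ it is constant and on $(1/2,\infty)$ it is a composition of smooth maps, so the only question is the junction at $x=1/2$. Introducing the change of variable $t = t(x) := \frac{1}{2x-1}$ (so $t\to+\infty$ as $x\downarrow 1/2$), we have $\Psi(x) = e\,e^{-t^2}$, and since $e^{-t^2}$ decays faster than any power of $1/t$, every one-sided derivative at $1/2$ exists and vanishes; together with $\Psi\equiv 0$ on the left this is the classical bump-function argument and gives $\Psi\in C^\infty(\R)$.

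For the derivative bounds on $\Psi$ I would prove by induction on $q$ that, for $x>1/2$, $\Psi^{(q)}(x)=\Psi(x)\,P_q(t(x))$ for a polynomial $P_q$ of degree at most $3q$, using $\frac{d}{dx}(1-t^2)=4t^3$ and $\frac{dt}{dx}=-2t^2$; concretely $P_q(t)=4t^3P_{q-1}(t)-2t^2P_{q-1}'(t)$. Tracking the $\ell_1$-norm of the coefficients through this recursion gives a bound of the form $\prod_{j\le q}(6j)=6^q q!$. Combining this with the elementary identity $\sup_{t\ge0}t^k e^{-t^2}=(k/(2e))^{k/2}$ applied to each monomial $t^k$ with $k\le 3q$, together with the prefactor $e$, yields $\sup_x|\Psi^{(q)}(x)|\le e\cdot 6^q q!\cdot(3q/(2e))^{3q/2}$; taking logarithms, $q\log q$ comes from $q!$ and $\tfrac{3q}{2}\log q$ from the monomial, so the leading term is $\tfrac52 q\log q$ and the remaining $O(q)$ fits inside $\tfrac{5q}{2}\log 4$, giving the claimed $\exp(\tfrac{5q}{2}\log(4q))$. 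I expect this inductive coefficient estimate --- verifying that the lower-order terms are genuinely absorbed into the stated constant --- to be the main technical obstacle.

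For $\Phi$ almost everything is immediate: $\Phi'(x)=\sqrt{e}\,e^{-x^2/2}$ shows $\Phi\in C^\infty$ and $0<\Phi'\le\sqrt{e}$ with equality only at $x=0$, while $0<\Phi(x)<\sqrt{e}\int_{-\infty}^{\infty}e^{-t^2/2}\,dt=\sqrt{2\pi e}$. For $q\ge1$ we have $\Phi^{(q)}(x)=\sqrt{e}\,\frac{d^{q-1}}{dx^{q-1}}e^{-x^2/2}=(-1)^{q-1}\sqrt{e}\,He_{q-1}(x)e^{-x^2/2}$ with $He_m$ the Hermite polynomials, and $\sup_x|He_m(x)e^{-x^2/2}|$ can be controlled by a standard contour-integral (or Fourier-transform) estimate: since $\widehat{e^{-x^2/2}}\propto e^{-\xi^2/2}$, one gets $\sup_x|\tfrac{d^m}{dx^m}e^{-x^2/2}|\le O(1)\cdot 2^{m/2}\Gamma(\tfrac{m+1}{2})$, and Stirling turns this into $\exp(\tfrac{m}{2}\log m+O(m))$. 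Substituting $m=q-1<q$ and absorbing the $\sqrt{e}$ gives $\sup_x|\Phi^{(q)}(x)|\le\exp(\tfrac{3q}{2}\log\tfrac{3q}{2})$ with ample slack.

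Finally, the low-order constants for $\Psi$: $0\le\Psi<e$ because $\Psi=e\,e^{-t^2}$ with $t>0$ on $(1/2,\infty)$ and $\Psi\equiv0$ elsewhere; and differentiating once gives $\Psi'(x)=4e\,t^3e^{-t^2}$ on $(1/2,\infty)$, which is $\ge0$ and whose maximum over $t>0$ is attained at $t^2=3/2$, with value $4e\,(3/2)^{3/2}e^{-3/2}=\sqrt{54/e}$; hence $0\le\Psi'\le\sqrt{54/e}$. Together with the bounds on $\Phi,\Phi'$ noted above this covers all the claims; the whole argument parallels the corresponding parts of Lemma~1 in \citet{carmon:lower:i}.
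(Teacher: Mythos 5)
The paper itself does not prove this lemma: it imports it verbatim as ``a subset of Lemma~1 in \citet{carmon:lower:i}'' and gives no argument, so there is no in-paper proof to compare against. Your reconstruction is correct and follows the same route Carmon et al.\ take in that reference. For $\Psi$: the bump-function argument handles smoothness at the junction $x=1/2$; writing $\Psi^{(q)}(x)=\Psi(x)P_q(t)$ with $t=(2x-1)^{-1}$, the recursion $P_q = 4t^3 P_{q-1}-2t^2P_{q-1}'$ and the degree bound $\deg P_q \le 3q$ give $\|P_q\|_1 \le \prod_{j\le q}(6j-2)\le 6^q q!$, and with $\sup_{t\ge 0}t^k e^{-t^2}=(k/2e)^{k/2}$ you obtain $e\cdot 6^q q!\,(3q/2e)^{3q/2}$, whose logarithm has leading term $\frac{5q}{2}\log q$ with a genuinely \emph{negative} $O(q)$ remainder ($\log 6 - 1 + \tfrac32\log(3/2e)\approx -0.10$), so the stated $\exp(\tfrac{5q}{2}\log(4q))$ bound holds with slack. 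For $\Phi$: $\Phi^{(q)}=\sqrt{e}\,\tfrac{d^{q-1}}{dx^{q-1}}e^{-x^2/2}$ and the Fourier/Hermite estimate $\sup_x\lvert\tfrac{d^m}{dx^m}e^{-x^2/2}\rvert\le\pi^{-1/2}2^{m/2}\Gamma(\tfrac{m+1}{2})$ give $\exp(\tfrac{q}{2}\log q+O(q))\le\exp(\tfrac{3q}{2}\log\tfrac{3q}{2})$, again with slack. The low-order constants you compute --- $0\le\Psi<e$, $\sup\Psi'=4e(3/2)^{3/2}e^{-3/2}=\sqrt{54/e}$, $0<\Phi<\sqrt{2\pi e}$, $0<\Phi'\le\sqrt{e}$ --- are all exactly right. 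The one hedge you flag (whether the lower-order terms are truly absorbed for \emph{every} $q$, not just asymptotically) does go through, but a complete write-up should verify $q=1$ by hand since $\sup_{t\ge0}t^ke^{-t^2}=(k/2e)^{k/2}$ is not monotone for small $k$ (its maximum over $k\le 3$ is attained at $k=0$, value $1$, not at $k=3$), so the generic bound $(3q/2e)^{3q/2}$ is not valid coordinatewise when $q=1$; that case must be checked against the target $4^{5/2}=32$ directly (it is, since $\sqrt{54/e}\approx 4.46$).
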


Now we present our proof, closely following \citet{carmon:lower:i}, Appendix B.2. We account for the indicators $\delta_k$ used in our construction, validating that they do not affect the aforementioned properties. \\

\begin{proof}{of Lemma~\ref{lemma:deterministic:properties}}
Fix $K \in \N, \ddelta \in \{0,1\}^K$. We first bound the suboptimality gap. We have $f_{K,\ddelta}(\mathbf{0}) \leq 0$ because $-\delta_1 \Psi(1)\Phi(0) \leq 0$ by Lemma \ref{carmon:technical_lemma} ii). By the same arguments, for any $\x$, we have 
\begin{align*}
    f_{K,\ddelta}(\x) 
    & = -\delta_1 \Psi(1)\Phi(x_1) + \sum_{k=2}^K \delta_k[\Psi(-x_{k-1})\Phi(-x_k) -\Psi(x_{k-1})\Phi(x_k)] \\
     &\geq  -\delta_1 \Psi(1)\Phi(x_1) - \sum_{k=2}^K \delta_k[\Psi(x_{k-1})\Phi(x_k)]
     \geq  -\delta_1(e \cdot \sqrt{2\pi e}) - \sum_{k=2}^K \delta_k[e \sqrt{2\pi e}]
     > -K \cdot e \cdot \sqrt{2\pi e} \geq -12 K.
\end{align*}
Thus, we get our bound on suboptimality. \\
For the second part, let $\x \in \R ^K$. For a unit vector $\v \in \R ^K$ we define the directional projection $h_{\x,\v}(\theta) = \fdet(\x + \theta \v)$. It suffices to show that $\lvert h_{\x,\v}^{(p+1)}(0)\rvert \leq \ell_p$ for any $\x,\v$, because the directional projection is infinitely differentiable, by Lemma \ref{carmon:technical_lemma}. Fix $\x,\v \in \R^K$. We can write
$$
     h_{\x,\v}^{(p+1)}(0) = \sum_{j_1,\ldots,j_{p+1}=1}^K \partial_{j_1}\cdots \partial_{j_{p+1}}f_{K,\ddelta}(\x)v_{j_1}\cdots v_{j_{p+1}}.
$$
All multiplicative terms in $\fdet$ have zero derivatives unless all derivatives are w.r.t.\ adjacent indices. Defining for convenience $v_0 = v_{K+1} = 0$ we can express the above as
$$
    h_{\x,\v}^{(p+1)}(0) = \sum_{\ggamma \in \{0,1\}^p \cup \{0,-1\}^p} \sum_{j=1}^K \seqgam{\partial} \fdet(\x) \seqgam{v}.
$$
We can bound
$$
\max_{j\in[K]}\max_{\gamma \in \{0,1\}^p \cup \{0,-1\}^p} \lvert \seqgam{\partial}\fdet(\x)\lvert \leq 
\max_{k\in [0:k+1]} \left\{4 \sup_{y \in \R}\left \lvert \Psi^{(k)}(y) \right \rvert \sup_{y' \in \R} \left \lvert \Phi^{(p+1-k)}(y') \right \rvert \right\}.
$$ Here, we have used that $\delta$ can only (potentially) suppress terms and that there are only 4 terms which may involve partial derivatives with respect to either $x_j$ and $x_{j+1}$ or $x_j$ and $x_{j-1}$. Note that if $\ggamma \not= \mathbf{0}$, there are only 2 such terms. \\
With Lemma \ref{carmon:technical_lemma}, the above can be further bounded by
$$
4\sqrt{2\pi e}\cdot e^{2.5(p+1)\log(4(p+1))} \leq e^{2.5p+\log p + 4p +10}.
$$
We define $\ell_p = 2^{p+1}e^{2.5p+\log p + 4p +10} \leq e^{2.5p+\log p + 5p + 11}$. Finally, we can bound the quantity of interest
$$
    \lvert h_{\x,\v}^{(p+1)}(0) \rvert \leq \sum_{\ggamma \in \{0,1\}^p \cup \{0,-1\}^p} 2^{-(p+1)}\ell_p \left\lvert \sum_{j=1}^K  \seqgam{v} \right\rvert \leq \ell_p,
$$ because $\left\lvert \sum_{j=1}^K  \seqgam{v} \right\rvert \leq 1$, which follows from $\v$ being a unit vector (see \citet{carmon:lower:i}, B.2). This concludes the proof.
\end{proof}

\section{Lower bounds for randomized algorithms}
\subsection{Proof of Theorem~\theorand{}}
The function $\hat{f}_{K;\B_i}$ from Definition~\defrandhard{} has some very useful properties regarding its Lipschitz constants and its gap to optimality.
\begin{lemma}[Lemma 6 in \citet{carmon:lower:i}]
	\label{lemma:carmon:hat:properties}
	The function $\hat{f}_{K;\B_i}$ satisfies the following properties:
	\begin{enumerate}[i)]
		\item $\hat{f}_{K;\B_i}(\mathbf{0}) - \inf_{\y \in \R^{d/n}} \hat{f}_{K;\B_i}(\y)\leq 12 K$.
		\item For every $p \geq 1$, the $p$th-order derivatives of $\hat{f}_{K;\B_i}$ are $\hat{\ell}_p$-Lipschitz continuous, where $\hat{\ell}_p \leq \exp(cp \log p + c)$ for a numerical constant $c < \infty$.
	\end{enumerate}
\end{lemma}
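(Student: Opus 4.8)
The plan is to follow \citet{carmon:lower:i} and reduce both claims to already-established properties of $\bar f_K = f_{K,\mathbf{1}}$ (namely Lemma~\ref{carmon:technical_lemma} and the argument inside the proof of Lemma~\ref{lemma:deterministic:properties}) together with elementary bounds on the auxiliary contraction $\rho$ and the isometric factor $\B_i^T$.

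For part i), I would first note $\rho(\mathbf{0}) = \mathbf{0}$, so $\hat f_{K;\B_i}(\mathbf{0}) = \bar f_K(\B_i^T\mathbf{0}) = \bar f_K(\mathbf{0}) = -\Psi(1)\Phi(0) \le 0$ by Lemma~\ref{carmon:technical_lemma}~ii). For the infimum, since $\tfrac{1}{10}\norm{\y}^2 \ge 0$ and $\B_i^T\rho(\y)$ always lies in $\R^K$, we get
\[
\inf_{\y\in\R^{d/n}}\hat f_{K;\B_i}(\y) \;\ge\; \inf_{\z\in\R^K}\bar f_K(\z) \;\ge\; -12K,
\]
where the last step is exactly Lemma~\ref{lemma:deterministic:properties}~i) applied with $\ddelta = \mathbf{1}$. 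Subtracting the two displays gives $\hat f_{K;\B_i}(\mathbf{0}) - \inf_\y \hat f_{K;\B_i}(\y) \le 12K$.

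For part ii), since $\hat f_{K;\B_i}$ is $C^\infty$, it suffices to bound $\norm{\nabla^{p+1}\hat f_{K;\B_i}(\y)}$ uniformly in $\y$ (and in $K,d,n,i$). The quadratic term contributes $\tfrac{1}{5}\I$ to $\nabla^{2}$ and nothing to $\nabla^{q}$ for $q\ge 3$, so it adds at most $\tfrac{1}{5}$. It remains to control $g(\y):=\bar f_K(\B_i^T\rho(\y))$, for which I would collect three ingredients. First, re-running the combinatorial estimate from the proof of Lemma~\ref{lemma:deterministic:properties}~ii) for \emph{every} order $q$ (not just $q=p+1$) shows $\sup_{\z}\norm{\nabla^q\bar f_K(\z)} \le \ell_q$ with $\ell_q \le \exp(c q\log q + c)$, a bound independent of $K$. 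Second, $\B_i$ has orthonormal columns, so $\norm{\B_i^T}\le 1$ and hence the inner map $u(\y):=\B_i^T\rho(\y)$ obeys $\norm{\nabla^q u(\y)} \le \norm{\nabla^q\rho(\y)}$. Third, writing $\rho(\y) = R\,\sigma(\y/R)$ with $\sigma(\z) = \z/\sqrt{1+\norm{\z}^2}$ gives $\nabla^q\rho(\y) = R^{1-q}\nabla^q\sigma(\y/R)$; a short induction on the structure of the fixed map $\sigma$ (which is bounded by $1$) yields $\sup_\z\norm{\nabla^q\sigma(\z)} \le s_q$ for universal constants $s_q$, and since $R = 230\sqrt{K}\ge 1$ we conclude $\norm{\nabla^q\rho(\y)} \le s_q$.

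Finally I would combine these via the multivariate Faà di Bruno formula: $\nabla^{p+1}g(\y)$ is a sum, over set partitions $\pi$ of $\{1,\dots,p+1\}$, of tensors of the form $(\nabla^{|\pi|}\bar f_K)(u(\y))$ contracted with $\bigotimes_{B\in\pi}\nabla^{|B|}u(\y)$. Taking operator norms and plugging in the three bounds gives $\norm{\nabla^{p+1}g(\y)} \le \sum_{\pi}\ell_{|\pi|}\prod_{B\in\pi}s_{|B|}$, a finite sum of products of universal constants whose size depends on $p$ but not on $K,d,n,i$; one checks this is again of the form $\exp(cp\log p + c)$. Taking $\hat\ell_p$ to be the maximum of this quantity and $\tfrac{1}{5}$ completes the proof. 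I expect the only genuine (if routine) work to be the third ingredient — the induction bounding all derivatives of $\z\mapsto \z/\sqrt{1+\norm{\z}^2}$ by absolute constants — and the careful tensor bookkeeping in the Faà di Bruno step; everything else is a direct transfer of known facts about $\bar f_K$.
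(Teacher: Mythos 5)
The paper does not prove this lemma; it is imported verbatim (as ``Lemma 6 in \citet{carmon:lower:i}'') and used as a black box when scaling the hard instances in Theorems~\theorand{} and~\theothirdlow{}. There is therefore no internal proof to compare against, but your reconstruction tracks the argument in the cited source and appears sound. For part~i) the reasoning is complete: $\rho(\mathbf{0})=\mathbf{0}$, the quadratic penalty is nonnegative, and $\inf_{\y}\hat f_{K;\B_i}(\y)\ge\inf_{\z}\bar f_K(\z)\ge -12K$ by exactly the computation in Lemma~\ref{lemma:deterministic:properties}~i) with $\ddelta=\mathbf{1}$.

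For part~ii) the Fa\`a~di~Bruno route is the right one; two points deserve slightly more care than your sketch gives them. First, your phrase ``absolute constants'' for $s_q := \sup_{\z}\norm{\nabla^q\sigma(\z)}$ is a little misleading: these bounds necessarily depend on the order $q$ (growing at most factorially), though that is harmless because the combined bound over the Bell-number-many partitions, together with $\ell_q\le\exp(\tfrac{5}{2}q\log q + cq)$ and $\prod_{B\in\pi}|B|!\le (p+1)!$, still comes out of the form $\exp(cp\log p + c)$. Second, you should verify (as the source does) that $s_q$ does not depend on the ambient dimension $d/n$; this holds because $\sigma(\z)=\z/\sqrt{1+\norm{\z}^2}$ is equivariant under orthogonal transformations, so its derivative tensors have operator norms independent of dimension. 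With those two clarifications, your argument matches the one in \citet{carmon:lower:i}.
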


With this, we can proceed with the proof of the main lower bound theorem.

\begin{proof}{of Theorem \theorand{}}
	We define the functions
	$$
	f_i(\x) = {\lambda \sigma^{p+1}} f^*_i\left(\frac{\x}{\sigma} \right) =  \lambda \sigma^{p+1} \hat{f}_{K;\B_i} \left(\frac{\C_i^T\x}{\sigma} \right),
	$$
	giving us
	$$
	F(\x) = \frac{1}{n}\sum_{i=1}^n f_i(\x).
	$$
	We will choose the scaling parameters to ensure that our instance belongs to the desired function class. We have
	\begin{align}
	\norm{\nabla^pf_i(\x) - \nabla^pf_i(\y)} \nonumber
	&\leq   \lambda \hat{\ell}_p \norm{\C_i^T \x - \C_i^T \y} \nonumber \\
	\label{eq:wasteful} &\leq  \lambda\hat{\ell}_p \norm{\x -  \y}  .
	\end{align}
	The first inequality follows from Lemmas \ref{lemma:additional:tensorineq} and \ref{lemma:carmon:hat:properties} and the second holds because $\C_i$ can be extended to the orthonormal matrix $\C$. The choice $\lambda = \frac{L_p}{\hat{\ell}_p}$ accomplishes our goal of smoothness with parameter $L_p$.
	
	Now fix an algorithm $\algo$ and assume $\{[i^t,\x^{(t)}]\}_{t\in \N}$ are the iterates produced by $\algo$ on $F$. Consequently, by Lemma~\lemmainformed{} $\{[i^t,\x^{(t)}/\sigma]\}_{t\in \N}$ is informed by $F^*$. 
	Therefore, we can apply Lemma~\lemmagradbound{} on the sequence $\{[i^t,\x^{(t)}/\sigma]\}_{t\in \N}$ to bound
	\begin{align*}
	\norm{\nabla F(\x^{(t)})}^2 
	&= \norm{ \lambda \sigma^{p} \nabla F^*(\x^{(t)}/\sigma)}^2  \\
	&= \lambda^2 \sigma^{2p} \norm{ \nabla F^*(\x^{(t)}/\sigma)}^2  \\
	&\stackrel{\lemmagradbound{}}{\geq} \frac{\sigma^{2p}\lambda^2}{16n},
	\end{align*}
	for all $t \in [0:T]$ with probability $1-\delta$ for a sufficiently large dimension $d$ (that depends on $\delta$). We will fix this dimension at the end. To get a lower bound for an $\varepsilon$ precision requirement we can choose $\sigma$ to be
	$$
	\frac{\sigma^{p}\lambda}{4\sqrt{n}} = \epsilon \iff \sigma = \left(\frac{4\sqrt{n}\varepsilon \hat{\ell}_p}{L_p}\right)^{\frac{1}{p}}.
	$$
	As a last step, we will guarantee the optimality gap requirement. 
	From Lemma~\ref{lemma:carmon:hat:properties}, we immediately have
	\begin{align*}
	F(\mathbf{0}) - \inf_{\x \in \R^d} F(\x)
	\leq 12\lambda \sigma^{p+1}K.
	\end{align*}
	We require 
	$$
	 12\lambda \sigma^{p+1}K = 12  \frac{L_p}{\hat{\ell}_p} \left(\frac{4\sqrt{n}\varepsilon \hat{\ell}_p}{L_p}\right)^{\frac{p+1}{p}} K\leq \Delta.
	$$
	To get the best possible bound, we choose
	$$
	K = \left \lfloor  \frac{\Delta}{192} \left(\frac{L_p}{\hat{\ell}_p}\right)^{\frac{1}{p}} \frac{1}{n^{\frac{p+1}{2p}}\epsilon^{\frac{p+1}{p}}}\right \rfloor.
	$$
	We will need that this $K$ is at least 1 in order to get a sensible bound, as becomes clear in the subsequent steps. To enforce this, we may require that
	$$
		\tilde{c}_p{\Delta}\left({L_p}\right)^{\frac{1}{p}} \frac{1}{\epsilon^{\frac{p+1}{p}}} \geq n^{\frac{p+1}{2p}},
	$$
	or in other words,
	$$
		n \leq c_p \frac{\Delta^{\frac{2p}{p+1}} L_p^{\frac{2}{p+1}}}{\epsilon^2}
	$$
	for some constants $c_p, \tilde{c}_p$ that depend on $p$. As Lemma~\lemmagradbound{} yields the lower bound $T = \frac{nK}{2}$ we get a lower bound of
	$$
	\Omega\left( \left(\frac{L_p}{\hat{\ell}_p}\right)^{\frac{1}{p}} \frac{\Delta {n}^{\frac{p-1}{2p}}}{\epsilon^{\frac{p+1}{p}}} \right)
	$$
	with probability  at least $1/2$ for large enough dimension $d$ (see below). Thus there must be a fixed function $F$ such that for this many iterations -- with probability $1/2$ depending only on $\xi$ -- the iterates $\algo$ produces on $F$ all have gradient larger than $\varepsilon$. For the dimension requirement, one can plug in the values of $K$ and $\delta = 1/2$ into the dimension requirement of Lemma~\lemmagradbound{}, to see that some $d \in \tilde{\upp}(n^{\frac{2p-1}{p}}\Delta L_p^{2/p}\varepsilon^{-\frac{2(p+1)}{p}}) \leq \tilde{\upp}(n^2\Delta L_p^2\epsilon^{-4})$ suffices.
\end{proof}

\subsection{Proof of technical lemmas for the randomized setting}
\begin{proof}{of Lemma \lemmainformed{}}
	We have $\nabla^{p}G(\x) = \frac{a}{b^p}\nabla^{p}F(\x/b)$. We have to exhibit an algorithm $\mathsf{B}$ such that $\mathsf{B}[F]$ follows the same distribution as $\{[i^t, \x^{(t)} / b]\}_{t \in \N}$.
	
	Let $\{A^{(t)} \}_{t\in \N}$ be the sequence of mappings that produce the iterates of $\algo$. With some mild abuse of notation we may write \footnote{We use $\nabla^k g_{i^{0:t-1}}(\x^{(0:t-1)})$ to denote the sequence of all queried $k$th-order derivatives to produce iterate $t$.}
	\begin{align*}
	\algo_\xi[G]^{(t)}= \,\,& A^{(t)} \Big \{
	&& \xi, i^{0:t-1}, \x^{(0:t-1)},   \\
	& && \nabla^{(0:q)}g_{i^0}(\x^{(0)}), \ldots ,\nabla^{(0:q)}g_{i^{t-1}} \left (\x^{(t-1)}\right ) 
	&&&\Big\}
	\\ = 
	\,\,&A^{(t)} \Big \{
	&& \xi, i^{0:t-1},\x^{(0:t-1)},  g_{i^{0:t-1}}\left(\x^{(0:t-1)}\right ),  \\
	& && 
	\nabla g_{i^{0:t-1}}\left (\x^{(0:t-1)}\right ), \ldots,
	\nabla ^q g_{i^{0:t-1}}\left (\x^{(0:t-1)}\right ) 
	&&&\Big\}.
	\end{align*}
	$\mathsf{B}$ shall choose $i^0$ exactly like $\algo$ does. We define the sequence of mappings $\{B^{(t)} \}_{t\in \N}$ underlying $\mathsf{B}$ on arbitrary input $H =\frac{1}{n} \sum_{i=1}^{n} h_i$ as
	\begin{align*}
	& \mathsf{B}_\xi[H]^{(t)}\\  \\ = \,\, &B^{(t)}\Big \{ 
	&& \xi, i^{0:t-1},\y^{(0:t-1)}, h_{i^{0:t-1}}\left (\y^{(0:t-1)}\right ),  \\
	& &&  
	\nabla h_{i^{0:t-1}}\left (\y^{(0:t-1)}\right ), \ldots,
	\nabla ^q h_{i^{0:t-1}}\left (\y^{(0:t-1)}\right )  &&&\Big\} \\
	= \,\, &\frac{1}{b} A^{(t)}\Big \{
	&&\xi, i^{0:t-1},b\cdot \y^{(0:t-1)}, a \cdot h_{i^{0:t-1}}\left (\y^{(0:t-1)}\right ), \\
	& && 
	\frac{a}{b}\nabla h_{i^{0:t-1}}\left (\y^{(0:t-1)}\right ), \ldots, 
	\frac{a}{b^q}\nabla ^q h_{i^{0:t-1}}\left (\y^{(0:t-1)}\right )
	&&&\Big\},
	\end{align*}
	where we apply the outer division only on the iterates and not the indices. 
	We can check by induction that for a fixed random seed $\xi$, $\mathsf{B}_\xi[F]^{(t)} = \frac{\algo_\xi[G]^{(t)}}{b}$ for all $t \in \N$: The base case is clear as $i^0$ does not depend on any oracle queries and $\x^{(0)} = \mathbf{0}$ is deterministic. Now assume that the equality holds for all $t' < t$. Then
	\begin{align*}
	& \mathsf{B}_\xi[F]^{(t)}\\ 
	\stackrel{\mathrm{I.H.}}{=} \,\,&B^{(t)}\Big \{
	&& \xi, i^{0:t-1},\frac{\x^{(0:t-1)}}{b}, 
	f_{i^{0:t-1}}\left (\frac{\x^{(0:t-1)}}{b}\right ), \\
	& && \nabla f_{i^{0:t-1}}\left (\frac{\x^{(0:t-1)}}{b}\right ), \ldots,
	\nabla^q f_{i^{0:t-1}}\left (\frac{\x^{(0:t-1)}}{b}\right )  &&& \Big\} \\
	= \,\, &  \frac{1}{b} A^{(t)}\Big \{
	&& \xi, i^{0:t-1},b\cdot \frac{\x^{(0:t-1)}}{b}, 
	a \cdot f_{i^{0:t-1}} \left (\frac{\x^{(0:t-1)}}{b}\right ), \\
	& &&\frac{a}{b}\nabla f_{i^{0:t-1}}\left (\frac{\x^{(0:t-1)}}{b}\right ), \ldots,
	\frac{a}{b^q}\nabla ^qf_{i^{0:t-1}}\left (\frac{\x^{(0:t-1)}}{b}\right )
	&&& \Big\} \\ = 
	\,\,&\frac{1}{b}A^{(t)} \Big \{
	&&\xi, i^{0:t-1},\x^{(0:t-1)}, 
	g_{i^{0:t-1}}\left (\x^{(0:t-1)}\right ),\\
	& && \nabla g_{i^{0:t-1}}\left (\x^{(0:t-1)}\right ), \ldots,
	\nabla ^q g_{i^{0:t-1}}\left (\x^{(0:t-1)}\right ) 
	&&&  \Big\} \\
	= \,\, &  \frac{\algo_\xi[G]^{(t)}}{b}.
	\end{align*}
	Therefore $\mathsf{B}[F]$ follows the same distribution as $\{[i_t, \x^{(t)} / b]\}_{t \in \N}$ and so the sequence is informed by $F$, as desired.
\end{proof}

The proof of Lemma \lemmakey{} is mostly identical to Lemma 12 in \citet{fang:spider} \footnote{The main difference is that we formalize that (thanks to the robust zero-chain), it does not matter how many derivatives the algorithm has access to, hence the identical statement.} and similar to Lemma 4 in \citet{carmon:lower:i}. We give it in full here for completeness and to convince the reader that the result holds for higher-order algorithms as well. The reader accustomed to lower bounds for convex optimization will be familiar with the ideas involved (see Lemma 6 and 7 in \citet{woodworth:tight}).

\begin{proof}{of Lemma \lemmakey{}}
	First, we define quantities that we will use throughout the proof.
	
	Define $\y_i^{(t)} = \rho(\C_i^T\x^{(t)}) = \frac{\C_i^T\x}{\sqrt{1+\norm{\C_i^T\x}^2/R^2}}$. Then $\y_i^{(t)} \in \R^{d/n}$ satisfies $\norm{\y_i^{(t)}} \leq R$. Let $\mathcal{V}_i^{(t)}$ be the set of previous transformed iterates at index $i$ along with the discovered columns of $\B$ of after iteration $t$:
	$$
		\mathcal{V}_i^{(t)} = \{\y_i^{(0)},\ldots, \y_i^{(t)}\} \cup \bigcup_{j=1}^n\{\bb_{j,1},\ldots, \bb_{j,\min(K,I_j^{t})}\}.
	$$
	 Let $\mathcal{U}_i^{(t)}$ be defined as in the premise of Lemma~\lemmakey{} and denote by $\tilde{\mathcal{U}}_i^{(t)}$ its ``complement" (all other columns): 
	$$
		\tilde{\mathcal{U}}_i^{(t)} = \left\{ \bb_{i,1},\ldots,\bb_{i,\min(K,I_i^{t-1})} \right\}.
	$$
	Define $\mathcal{U}^{(t)} = \bigcup_{i=1}^n\mathcal{U}^{(t)}_i$ and $\tilde{\mathcal{U}}^{(t)} = \bigcup_{i=1}^n \tilde{\mathcal{U}}^{(t)}_i$
	and let $\P_i^{(t)}$ denote the orthogonal projection onto the span of $\mathcal{V}_i^{(t)}$.  Let $\P_i^{(t)\bot} = \I - \P_i^{(t)}$ be its orthogonal complement. Both of these are mappings from $\R^{d/n} \rightarrow \R^{d/n}$.
	
	Recall that our ultimate goal is to show that $\{[i^t,\x^{(t)}]\}_{t\in \N}$ being informed by ${F}^*$ implies that with probability $1-\delta$, for all $ t \in \{0,\ldots,T\}$, all $i \in [n]$ and all corresponding $\mathbf{u} \in \mathcal{U}_i^{(t)}$ the inequality
	\begin{equation}
		\label{eq:appendix:smallprod}
			 \abs{\langle \mathbf{u}, \y_i^{(t)}\rangle} < \frac{1}{2}
	\end{equation}
	
	holds. The case $t=0$ is obviously true, so from now on we focus on showing \eqref{eq:appendix:smallprod} for $t \geq 1$. We will first define an auxiliary event, show that it implies our result and then bound its probability. For any $ t \in [T]$ define the event
	$$
		G^t = \bigcup_{i \in [n]}\bigcup_{\u \in \mathcal{U}^{(t)}}\left\{ \abs*{\langle \u, \P_i^{(t-1)\bot}\y_i^{(t)} \rangle} < a \norm{\P_i^{(t-1)\bot}\y_i^{(t)}}  \right\},
	$$
	where $a = \min\left(\frac{1}{3(T+1)}, \,\frac{1}{2(1+\sqrt{3T})R}\right)$. Note that the union is over $\mathcal{U}^{(t)}$ and not $\mathcal{U}^{(t)}_i$. Let $G^{\leq t} = \cap_{j=1}^t G^{j}$. 
	We first show that $G^{\leq T}$ implies \eqref{eq:appendix:smallprod}.
	
	Assume $\mathcal{U}_i^{(t)} \not = \emptyset$, otherwise \eqref{eq:appendix:smallprod} holds trivially. For any $i \in [n]$, $t \in [T]$ and $\u \in \mathcal{U}^{(t)}_i$ we have
	\begin{align*}
		\abs{\langle \mathbf{u}, \y_i^{(t)}\rangle} &\leq \abs*{\langle  \mathbf{u} , \P_i^{(t-1)}\y_i^{(t)} + \P_i^{(t-1)\bot}\y_i^{(t)} \rangle} \\
		&<  \abs*{\langle  \mathbf{u} , \P_i^{(t-1)}\y_i^{(t)} \rangle}  + a \norm{\P_i^{(t-1)\bot}\y_i^{(t)}}  \\
		&\leq R \norm{\P_i^{(t-1)}\u}  + a R .
 	\end{align*}
 	In the second step we used $G^{\leq T}$ and in the third step we used Cauchy-Schwarz and the fact that $\P_i^{(t-1)}$ and $\P_i^{(t-1)\bot}$ are orthogonal projectors and therefore self-adjoint.
	If we manage to show $\norm{\P_i^{(t-1)}\u}  \leq \sqrt{3T}a =: b$ we are done, because the choice of $a$ then implies that $a R + R \norm{\P_i^{(t-1)}\u} \leq \frac{1}{2}$. 
	
	We will show this by induction over $t \in [T]$: Consider $t=1$ and let $i \in [n]$ be arbitrary. We have $\mathcal{V}_i^{(t-1)} = \mathcal{V}_i^{(0)} =  \{\y_i^{(0)}, \bb_{i^0,1} \} = \{\mathbf{0}, \bb_{i^0,1} \} $. Because $\u$ can be any column of $\B$ except $\bb_{i^0,1}$ we have $\P_i^{(t-1)}\u = 0$. For the induction step, another way to write the vectors in $\mathcal{V}_i^{(t-1)}$ is in the order they are discovered. That is, add to the set each iterate at $i$ and an additional column of $\B_{i^j}$ for the queried index $i^{j}$ at iteration $j$. We get the sequence
	$$
		\y_i^{(0)},\,\bb_{i^0,\min(I^0_{i^0},K)},\,\y_i^{(1)},\,\bb_{i^1,\min(I^1_{i^1},K)},\ldots,\,\y_i^{(t-1)},\,\bb_{i^{t-1},\,\min(I^{t-1}_{i^{t-1}},K)}.
	$$
	We will now apply the Gram-Schmidt procedure on these vectors. Remember that for a sequence of vectors $\vvv_i$ the Gram-Schmidt procedure (without normalization) constructs vectors
	\begin{align*}
		\u_1 &= \vvv_1 \\
		\u_2 &= \vvv_2 - \mathrm{proj}_{\u_1}(\vvv_2) \\
		& \,\,\,\vdots \\
		\u_k &= \vvv_k - \mathrm{proj}_{\u_1,\ldots,\u_{k-1}}(\vvv_k),
	\end{align*}
	where $\mathrm{proj}_S$ shall denote the projection on a set of vectors $S$.
	Applying this scheme to our sequence above, we get vectors \footnote{Where $\P_i^{(-1)} = \mathbf{0}_{d/n,d/n}$ is the zero matrix for convenience.}
	\begin{align*}
		\left\{ \y_i^{(z)} - \P_i^{(z-1)}\y_i^{(z)} \right\}_{z=0}^{t-1} = \left\{  \P_i^{(z-1)\bot}\y_i^{(z)} \right\}_{z=0}^{t-1}
	\end{align*}
	and 
	\begin{align*}
		&\left\{ \bb_{i^z,\min(I_{i^{z}}^{z}, K)} - \P_i^{(z-1)}\bb_{i^z,\min(I_{i^{z}}^{z}, K)}  - \mathrm{proj}_{\P_i^{(z-1)\bot} \y_{i}^{(z)}} \bb_{i^z,\min(I_{i^{z}}^{z}, K)}  \right\}_{z=0}^{t-1} \\
		=: &\left\{  \hat{\P}_i^{(z-1)\bot} \bb_{i^z,\min(I_{i^{z}}^{z}, K)} \right\}_{z=0}^{t-1} .
	\end{align*}
	We have $\mathrm{proj}_{\P_i^{(z-1)\bot}\y_{i}^{(z)}} = \frac{({\P}_i^{(z-1)\bot}\y_i^{(z)} )({\P}_i^{(z-1)\bot}\y_i^{(z)} )^T}{\norm{{\P}_i^{(z-1)\bot}\y_i^{(z)} }^2}$ and therefore write the projection $\hat{\P}_i^{(z-1)}$ onto $\mathcal{V}_i^{(z-1)} \cup \{\y_i^{(z)}\}$ as
	$$
		\hat{\P}_i^{(z-1)} = \P_i^{(z-1)} + \frac{({\P}_i^{(z-1)\bot}\y_i^{(z)} )({\P}_i^{(z-1)\bot}\y_i^{(z)})^T}{\norm{{\P}_i^{(z-1)\bot}\y_i^{(z)} }^2}.
	$$
	The orthogonalized vectors give us a basis in which we can write the norm $\norm{\P_i^{(t-1)}\u}^2$ as 
	\begin{align}
		\label{eq:inductive_sum_bound}
		 \sum_{z=0}^{t-1} \abs*{\left \langle \frac{{\P}_i^{(z-1)\bot}\y_i^{(z)}}{\norm{{\P}_i^{(z-1)\bot}\y_i^{(z)}}}, \u\right \rangle}^2
		+ \sum_{z=0, \, I_{i^{z}}^{z} \leq K}^{t-1} \abs*{\left \langle \frac{{\hat{\P}}_i^{(z-1)\bot}\bb_{i^z,I_{i^{z}}^{z}} }{\norm{\hat{\P}_i^{(z-1)\bot}\bb_{i^z,I_{i^{z}}^{z}} }}, \u\right \rangle}^2.
	\end{align}
	Note that the set we applied Gram-Schmidt on was not linearly independent so we may get $\mathbf{0}$-vectors. These do not influence the calculations, so we simply assume they are not present in \eqref{eq:inductive_sum_bound} from now on. The first term in \eqref{eq:inductive_sum_bound} is bounded by $t a^2$ by the induction hypothesis. Let $z$ be arbitrary but fixed and assume $I_{i^z}^z \leq K$. Recall the definition of $\mathcal{U}^{(t)}$. Then $\u = \bb_{i,j}$ for some $j > I_i^{t-1} \geq I_i^{z}$.  $\B$ has orthonormal columns and so $\u \, \bot \, \bb_{i^z,I_{i^{z}}^{z}}$. We will bound the second term in \eqref{eq:inductive_sum_bound} now:
	\begin{align}
		&\abs*{\left \langle {\hat{\P}}_i^{(z-1)\bot}\bb_{i^z,I_{i^{z}}^{z}}, \u\right \rangle} \nonumber \\
		=\,\,& 
		\abs*{\left \langle \bb_{i^z,I_{i^{z}}^{z}}- {\hat{\P}}_i^{(z-1)}\bb_{i^z,I_{i^{z}}^{z}}, \u\right \rangle} \nonumber \\
		=\,\,&\abs*{\left \langle \bb_{i^z,I_{i^{z}}^{z}} - {{\P}}_i^{(z-1)}\bb_{i^z,I_{i^{z}}^{z}} - \frac{({\P}_i^{(z-1)\bot}\y_i^{(z)} )({\P}_i^{(z-1)\bot}\y_i^{(z)})^T}{\norm{{\P}_i^{(z-1)\bot}\y_i^{(z)} }^2}\bb_{i^z,I_{i^{z}}^{z}}, \u\right \rangle} \nonumber \\
		\leq \,\, & 
		%\abs*{\left \langle \bb_{i^z,I_{i^{z}}^{z}} ,\u \right \rangle }+ \abs*{\left \langle %{{P}}_i^{(z-1)}\bb_{i^z,I_{i^{z}}^{z}} ,\u \right \rangle }+ \abs* {\left \langle %\frac{(\hat{\P}_i^{(z-1)\bot}\y_i^{(z)} %)(\hat{\P}_i^{(z-1)\bot}\y_i^{(z)})^T}{\norm{\hat{\P}_i^{(z-1)\bot}\y_i^{(z)} %}^2}\bb_{i^z,I_{i^{z}}^{z}}, \u\right \rangle} \\
		 \label{eq:app:second_term} \abs*{\left \langle {{\P}}_i^{(z-1)}\bb_{i^z,I_{i^{z}}^{z}} ,\u \right \rangle }+ \abs* {\left \langle \frac{{\P}_i^{(z-1)\bot}\y_i^{(z)} }{\norm{{\P}_i^{(z-1)\bot}\y_i^{(z)} }}, \u\right \rangle \left \langle \frac{{\P}_i^{(z-1)\bot}\y_i^{(z)}}{\norm{{\P}_i^{(z-1)\bot}\y_i^{(z)} }},\bb_{i^z,I_{i^{z}}^{z}}\right \rangle} ,
	\end{align}
	where in the last step we used $\u \, \bot \, \bb_{i^z,I_{i^{z}}^{z}}$ and the triangle inequality.
	For an orthonormal projector $\P$ and any vectors $\vvv,\u$ we have $\langle \P\vvv,\u \rangle = \langle \P\vvv,\P\u \rangle$. Therefore the left term in \eqref{eq:app:second_term} can be bounded by $b^2$ as follows:
	\begin{align}
	 \abs*{\left \langle {{\P}}_i^{(z-1)}\bb_{i^z,I_{i^{z}}^{z}} ,\u \right \rangle } &= 
	 \abs*{\left \langle {{\P}}_i^{(z-1)}\bb_{i^z,I_{i^{z}}^{z}} ,{\P}_i^{(z-1)}\u \right \rangle }  \nonumber \\
	 &\leq 
	 \norm{\P_i^{(z-1)}\bb_{i^z,I_{i^{z}}^{z}}}\norm{\P_i^{(z-1)}\u} \nonumber \\
	 &\leq b^2. \label{eq:bsquared_bound}
	\end{align}
	The last step holds because of the induction hypothesis. Indeed, we have $\u \in \mathcal{U}^{(t)} \subset \mathcal{U}^{(z)}$ and   $\bb_{i^z,I_{i^z}^{z}} = \bb_{i^z,I_{i^z}^{z-1}+1} \in \mathcal{U}_{i^z}^{(z)} \subset \mathcal{U}^{(z)}$.
	
	 Next, our assumption is that $G^{\leq T}$ happens and therefore $G^{z}$ as well. Using its definition twice on the right term in \eqref{eq:app:second_term} yields
	\begin{align}
	 &\abs* {\left \langle \frac{{\P}_i^{(z-1)\bot}\y_i^{(z)} }{\norm{{\P}_i^{(z-1)\bot}\y_i^{(z)} }}, \u\right \rangle \left \langle \frac{{\P}_i^{(z-1)\bot}\y_i^{(z)}}{\norm{{\P}_i^{(z-1)\bot}\y_i^{(z)} }},\bb_{i^z,I_{i^{z}}^{z}}\right \rangle} \leq a^2 \label{eq:app:asquaredbound}.
	\end{align}
	
	We bound the norm in the denominator of the right term in \eqref{eq:inductive_sum_bound} by
	\begin{align*}
		\norm{\hat{\P}_i^{(z-1)\bot}\bb_{i^z,I_{i^{z}}^{z}}}^2
		&=  \norm{\bb_{i^z,I_{i^{z}}^{z}}}^2-\norm{ \hat{\P}_i^{(z-1)}\bb_{i^z,I_{i^{z}}^{z}}}^2 \\
		&=\norm{\bb_{i^z,I_{i^{z}}^{z}}}^2-\norm{ {\P}_i^{(z-1)}\bb_{i^z,I_{i^{z}}^{z}}}^2 - \abs*{ \left \langle  \frac{\P_i^{(z-1)\bot}\y_i^{(z)}}{\norm{\P_i^{(z-1)\bot}\y_i^{(z)}}} , \bb_{i^z,I_{i^{z}}^{z}} \right \rangle}^2\\
		&\geq 1 - b^2 - a^2.
	\end{align*}
	 The first step is justified by the Pythagorean theorem because $\hat{\P}_i^{(z-1)\bot}\bb_{i^z,I_{i^{z}}^{z}}$ and $\hat{\P}_i^{(z-1)}\bb_{i^z,I_{i^{z}}^{z}}$ are orthogonal. The second follows by the Pythagorean theorem and the definition of $\hat{\mathbf{P}}^{(z-1)}$. For the inequality, we use the same arguments as in \eqref{eq:bsquared_bound} and \eqref{eq:app:asquaredbound}.
	
	We can return to \eqref{eq:inductive_sum_bound}. Recall that $b = \sqrt{3T}a$ and thus
	$a^2+b^2 = 3Ta^2 + a^2 = (3T + 1)a^2 \leq a$ by definition of $a$. We use this in step $(*)$ below:
	\begin{align*}
	\norm{\P_i^{(t-1)}\u}^2 &= \sum_{z=0}^{t-1} \abs*{\left \langle \frac{{\P}_i^{(z-1)\bot}\y_i^{(z)}}{\norm{{\P}_i^{(z-1)\bot}\y_i^{(z)}}}, \u\right \rangle}^2
	+ \sum_{z=0,\, I_{i^{z}}^{z} \leq K}^{t-1} \abs*{\left \langle \frac{{\hat{\P}}_i^{(z-1)\bot}\bb_{i^z,I_{i^{z}}^{z}} }{\norm{\hat{\P}_i^{(z-1)\bot}\bb_{i^z,I_{i^{z}}^{z}} }}, \u\right \rangle}^2 \\
	&\leq ta^2+t\frac{(a^2+b^2)^2}{1-(a^2+b^2)} \\
	&\stackrel{(*)}{\leq} ta^2 + t\frac{a^2}{1-a} \\
	&\leq 3Ta^2 \\
	&= b^2,
	\end{align*}
	where the last inequality holds because $a \leq 1/2$ and $t\leq T$. This concludes the induction. We have thus proven that $G^{\leq T}$ implies our result, namely that equation (2) holds for all $ t \in \{0,\ldots,T\}$, all $i \in [n]$ and all corresponding $\mathbf{u} \in \mathcal{U}_i^{(t)}$.
	
	We now derive an upper bound for the probability of the complement event $(G^{\leq T})^c$. Note that if $G^{\leq T}$ does not happen, then there is a smallest $t$ for which it fails. For convenience, let $G^{<1}$ be an event that always happens. Using a union bound, this argumentation is reflected by
	\begin{equation}
			\Pr((G^{\leq T})^c \leq \sum_{t=1}^{T} \Pr((G^{\leq t})^c  \, \vert \, G^{<t}). \label{eq:app:motherofunions}
	\end{equation}

  We will bound the probability $\Pr((G^{\leq t})^c \,  \vert \, G^{<t})$. For the remainder of the proof, we need matrices analogous to the sets $\mathcal{U}^{(t)}$ and $\tilde{\mathcal{U}}^{(t)}$. First define $\hat{i}^{t}$ to be the sequence $i^{0:t-1}$. Then let 
 $$
 \tilde{\U}_j^{\hat{i}^{t}} = \left[\bb_{j,1} \, \vert \, \cdots \, \vert \, \bb_{j,\min(K,I_j^{t-1})}\right],
 $$ 
 where $I_j^{t-1}$ is according to the sequence $\hat{i}^{t}$. Then define $\tilde{\U}^{\hat{i}^{t}} = [\tilde{\U}_1^{\hat{i}^{t}} \cdots \tilde{\U}_n^{\hat{i}^{t}} ]$. Similarly, we define the ``complement" matrices
	$$
	\U_j^{\hat{i}^{t}} = \left[\bb_{j,I_j^{t-1} + 1} \,\vert \, \cdots \,\vert\, \bb_{j,K}  \right].
	$$
	Note that for any $j$, one of $\U_j^{\hat{i}^{t}}$ or $\tilde{\U}_j^{\hat{i}^{t}}$ could potentially be empty. This will not be problematic in what follows.
	 Analogous to before ${\U}^{\hat{i}^{t}} = [{\U}_1^{\hat{i}^{t}} \cdots {\U}_n^{\hat{i}^{t}} ]$. Finally  $\bar{\U}^{\hat{i}^{t}} = [\tilde{\U}^{\hat{i}^{t}}, {\U}^{\hat{i}^{t}}]$ is a matrix with  all columns of $\B$, but in different order. For our event, by the law of total probability we have
	\begin{align}
		\label{eq:app:thingtobound}
		&\,\,\Pr((G^{\leq t})^c  \, \vert \, G^{<t}) \nonumber \\
		= &\sum_{\ih_0^t \in \hat{S}^t}\E_{\xi, \tilde{\U}^{\ih_0^t}}\left[ \Pr((G^{\leq t})^c  \, \vert \,  G^{<t},  \ih^t=\ih_0^t,\xi,\tilde{\U}^{\ih_0^t}) \, \Pr(\ih^t=\ih_0^t \, \vert \, G^{<t},\xi,\tilde{\U}^{\ih_0^t})\right].
	\end{align}
	In the rest, we show for all (fixed) $t$, $\xi_0,\tilde{\U}_0, \hat{i}_0^t$ a bound on the probability 
	\begin{align}
		\label{eq:appendix:prob:union}
		&\,\,\,\,\Pr((G^{\leq t})^c  \, \vert \, G^{<t}, \ih^t=\ih_0^t, \xi=\xi_0,\tilde{\U}^{\ih_0^t} = \tilde{\U}_0) \nonumber \\
		\leq & \sum_{\substack{i \in [n] \\\u \in \mathcal{U}^{(t)}}} \Pr \left( \abs*{\langle \u, \P_i^{(t-1)\bot}\y_i^{(t)} \rangle} \geq a \norm{\P_i^{(t-1)\bot}\y_i^{(t)}}  \, \vert \, G^{<t}, \ih^t=\ih_0^t,\xi=\xi_0,\tilde{\U}^{\ih_0^t} = \tilde{\U}_0 \right).
	\end{align}
	A bound on \eqref{eq:appendix:prob:union} is also a bound for \eqref{eq:app:thingtobound}, because 
	$$
		\sum_{\ih_0^t \in \hat{S}^t}\E_{\xi, \tilde{\U}^{\ih_0^t}} \Pr(\ih^t=\ih_0^t \, \vert \, G^{<t},\xi,\tilde{\U}^{\ih_0^t}) = 1.
	$$
	First, we show that given $G^{<t}$, the next iterate $[i^t, \x^{(t)}]$ produced by $\algo$ only depends on $\tilde{\U}^{\ih^t}$ and not the full draw of $\bar{\U}^{\ih^t}$, because $\bar{f}_K$ is a robust zero-chain. This is formalized below:
	\begin{lemma}
		\label{appendix:sublemma}
		For every $t \in [T]$, there exist measurable functions $A^{(t)}_+$ and $A^{(t)}_-$ such that
		$$
			[i^t, \x^{(t)}] = A^{(t)}_+(\xi, \tilde{\U}^{\ih^t}, \ih^t) \mathds{1}_{G^{<t}} + A^{(t)}_-(\xi, \bar{\U}^{\ih^t}, \ih^t) \mathds{1}_{(G^{<t})^c}.
		$$
	\end{lemma}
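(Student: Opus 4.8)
{of Lemma \ref{appendix:sublemma}}
Since the sequence is informed by $F^*$, I would begin by assuming it literally equals $\algo[F^*]$ for some randomized algorithm $\algo$ with the maps $\{A^{(t)}\}$ of Assumption~\assalgo{}, coupled with the same draw of $\B$; nothing below needs more. The plan is a strong induction on $t$ proving the slightly stronger statement that, on $G^{<t}$, all of $i^{0},\x^{(0)},\dots,i^{t-1},\x^{(t-1)}$ together with all oracle responses $\nabla^{(0:q)}f^*_{i^{s}}(\x^{(s)})$ for $s\le t-1$ are measurable functions of $(\xi,\tilde\U^{\ih^{t}},\ih^{t})$ alone. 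The fixed orthogonal matrix $\C$ and the fixed function $\bar f_K$ are not random and get absorbed into these functions, so only $\B$ --- and within $\B$ only the \emph{discovered} columns $\tilde\U^{\ih^t}$ --- ever needs to be exposed. The map $A^{(t)}_-$ needs no argument at all: $[i^t,\x^{(t)}]$ is always a measurable function of $(\xi,\B)$, and $(\xi,\bar\U^{\ih^t},\ih^t)$ determines $(\xi,\B)$ since $\ih^t$ tells us each $I_i^{t-1}$ and hence how to reassemble $\B_i=[\tilde\U_i^{\ih^t}\,\vert\,\U_i^{\ih^t}]$; so I would set $A^{(t)}_-(\xi,\bar\U^{\ih^t},\ih^t):=[i^t,\x^{(t)}]$.

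The base case ($t=0$, hence also $t=1$) is immediate because $\x^{(0)}=\mathbf 0$ and $i^0$ is produced from $\xi$ only. For the inductive step I would feed the strengthened hypothesis through the algorithm map $[i^t,\x^{(t)}]=A^{(t)}\big(\xi,i^{0:t-1},\x^{(0:t-1)},\{\nabla^{(0:q)}f^*_{i^{s}}(\x^{(s)})\}_{s\le t-1}\big)$: on $G^{<t}\subseteq G^{<t-1}$ the hypothesis makes every argument a function of $(\xi,\tilde\U^{\ih^{t-1}},\ih^{t-1})$ except the single \emph{new} response $\nabla^{(0:q)}f^*_{i^{t-1}}(\x^{(t-1)})$; and since $\ih^{t-1}$ is a prefix of $\ih^t$ and $I_i^{t-2}\le I_i^{t-1}$, every column of $\tilde\U^{\ih^{t-1}}$ is also a column of $\tilde\U^{\ih^t}$, so such a function is a function of $(\xi,\tilde\U^{\ih^t},\ih^t)$. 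Defining $A^{(t)}_+$ as the resulting composition (measurable because it composes measurable maps) then closes the induction, provided the new response is handled.

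Handling the new response is, I expect, the only real obstacle, and it is precisely the higher-order point flagged in the footnote. Write $i=i^{t-1}$ and $m=I_i^{t-1}$, so that this query is the $m$-th query to index $i$; recall $f^*_i(\x)=\bar f_K\big(\B_i^T\rho(\C_i^T\x)\big)+\tfrac{1}{10}\norm{\C_i^T\x}^2$, whose quadratic term depends only on $\x$ and the fixed $\C$, so only the $\bar f_K$ term is at issue. On $G^{<t}=G^{\leq t-1}$, the main argument of Lemma~\ref{lemma:fangkey} (its ``$G^{\leq T}$ implies \eqref{eq:appendix:smallprod}'' part, applied at iteration $t-1$) gives $\abs{\langle\bb_{i,j},\rho(\C_i^T\x^{(t-1)})\rangle}<\tfrac{1}{2}$ for every $j\in\mathcal{U}_i^{(t-1)}$, i.e.\ for every $j\ge m$; equivalently, all coordinates with index $\ge m$ of the point $\B_i^T\rho(\C_i^T\x^{(t-1)})$ at which $\bar f_K$ is evaluated are smaller than $\tfrac{1}{2}$ in absolute value. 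Because $\bar f_K$ is a robust zero-chain (Definition~\ref{def:robustchain}), on a neighborhood of that point $\bar f_K$ does not depend on coordinates with index $>m$, so \emph{every} derivative tensor of $\bar f_K$ there is supported on $[m]$; expanding $\nabla^{(0:q)}\big[\bar f_K\circ\B_i^T\rho\circ\C_i^T\big]$ at $\x^{(t-1)}$ by the chain rule, the surviving terms contract these tensors only against derivatives of the map $\x\mapsto\B_i^T\rho(\C_i^T\x)$ in its first $m$ coordinates, and coordinate $j$ of that map depends on $\B_i$ only through $\bb_{i,j}$. Hence the new response depends on $\B_i$ only through $\bb_{i,1},\dots,\bb_{i,\min(K,m)}$, i.e.\ through the columns of $\tilde\U_i^{\ih^t}$ (as $m=I_i^{t-1}$); together with the induction hypothesis that $\x^{(t-1)}$ and $i^{t-1}$ are already $(\xi,\tilde\U^{\ih^t},\ih^t)$-measurable, this makes the new response $(\xi,\tilde\U^{\ih^t},\ih^t)$-measurable, completing the strengthened statement at level $t$. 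The remaining measurability verifications and the column-index bookkeeping are routine.
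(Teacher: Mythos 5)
Your proof is correct and follows essentially the same route as the paper's: both proceed by induction on $t$, invoke the "small inner products" conclusion from the first half of Lemma~\ref{lemma:fangkey}'s proof under $G^{<t}$, and then use the robust zero-chain property of $\bar f_K$ (together with the fact that $\C$ is deterministic) to argue that the new oracle response depends on $\B_{i^{t-1}}$ only through the already-discovered columns $\bb_{i^{t-1},1},\dots,\bb_{i^{t-1},\min(K,I_{i^{t-1}}^{t-1})}$, all of which sit inside $\tilde\U^{\ih^t}$. The minor additions you make — stating a strengthened inductive hypothesis that also covers the past responses, spelling out the Faà di Bruno contraction argument, and giving an explicit construction of $A^{(t)}_-$ via reassembling $\B$ from $(\bar\U^{\ih^t},\ih^t)$ — are all fine and simply make explicit details the paper leaves implicit.
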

	\begin{proof}{of Lemma \ref{appendix:sublemma}}
		Recall the definition $f^*_i(\x) = \hat{f}_{K;\B_i}(\C_i^T\x) =  \bar{f}_K(\B_i^T\rho(\C_i^T\x)) + \frac{1}{10}\norm{\C_i^T\x}^2$ for convenience. The sequence $\{[i^t,\x^{(t)}]\}_{t\in \N}$ is informed by ${F}^*$.
		Therefore, for any $t \in \N$, there exists a measurable mapping $A^{(t)}$ such that:
		$$
		[i^t, \x^{(t)}] = A^{(t)} 
		\Big \{
		\xi, \ih^t, 
		\x^{(0:t-1)},
		\nabla^{(0:q)}{f}^*_{i^0}(\x^{(0)}), \ldots ,\nabla^{(0:q)}{f}^*_{i^{t-1}}(\x^{(t-1)})
		\Big\}.
		$$
		We show our result by induction on $t \in [T]$. The base case is clear, as the first iterate is $\x^{(0)} = \mathbf{0}$. For the step, assume $G^{<t+1}$ happens and that the result holds for any $s\leq t$. By the derivation on the previous pages we have $\abs{\langle \bb_{i^t,j},\y_{i^t}^{(t)} \rangle} < 1/2$ for all $j \geq I^{t-1}_{i^t} + 1 = I^{t}_{i^t}$.
		Then because $\bar{f}_K$ is a robust zero-chain and $\C$ is fixed, $\nabla^{(0:q)}{f}^*_i(\x^{(t)})$ only depends on $\x^{(t)}$ and columns of $\B_{i^t}$ with indices up to $\min(K,I_{i^t}^{t})$. Note that $\tilde{\U}^{\ih^{t+1}}$ contains all of those columns of $\B_{i^t}$. Therefore the computation of the pair $[i^t,\x^{(t)}]$ only depends on $\x^{(0)},\ldots,\x^{(t)}$, $\ih^{t+1}$ and $\tilde{\U}^{\ih^{t+1}}$ in case $G^{<t+1}$ happens. In that case, we may write
		$$
			[i^{t+1}, \x^{(t+1)}] = A^{(t+1)}_+(\xi, \tilde{\U}^{\ih^{t+1}}, \ih^{t+1}),
		$$
		with the dependence on the previous iterates being implicit (justified by the induction hypothesis).
		This leads to the statement of this sub-lemma.
	\end{proof}
	
	For $t \in [T]$, condition on $G^{<t}$, $\ih^t= \ih_0^t$, $\xi = \xi_0$ and $\tilde{\U}^{\ih_0^t} = \tilde{\U}_0$. Consequently, the iterates $\x^{(1)},\ldots,\x^{(t)}$ are deterministic and so are the $\y_i$'s. Thus for all $i \in [n]$, the quantity $\P_i^{(t-1)\bot}\y_i^{(t)}$ is deterministic as well (recall the definition of $\mathcal{V}_i^{(t-1)})$. 
	%We have
	%\begin{align*}
		%	\abs{\langle \u, \P_i^{(t-1)\bot}\y_i^{(t)} \rangle} 
		%&= \abs{\langle \P_i^{(t-1)}\u, \P_i^{(t-1)\bot}\y_i^{(t)} \rangle + \langle \P_i^{(t-1)\bot}\u, \P_i^{(t-1)\bot}\y_i^{(t)} \rangle} \\
		%	&=\abs{\langle \P_i^{(t-1)\bot}\u, \P_i^{(t-1)\bot}\y_i^{(t)} \rangle}  \\
		%	&\leq\abs*{\left \langle\frac{ \P_i^{(t-1)\bot}\u}{\norm{ \P_i^{(t-1)\bot}\u}}, \P_i^{(t-1)\bot}\y_i^{(t)} \right \rangle} 
	%\end{align*}
	%where the last step follows because the norm in the denominator is bounded by 1. 

	For any (still random) $\u \in \mathcal{U}_i^{(t)}$, we are interested in (recall \eqref{eq:appendix:prob:union}):
		\begin{align*}
	&\Pr \left( \abs*{\langle \u, \P_i^{(t-1)\bot}\y_i^{(t)} \rangle} \geq a \norm{\P_i^{(t-1)\bot}\y_i^{(t)}}  \, \vert \, G^{<t}, \ih^t=\ih_0^t,\xi=\xi_0,\tilde{\U}^{\ih_0^t} = \tilde{\U}_0 \right) \\
	\leq \,\, & \Pr \left( \abs*{\left \langle\frac{ \P_i^{(t-1)\bot}\u}{\norm{ \P_i^{(t-1)\bot}\u}}, \frac{\P_i^{(t-1)\bot}\y_i^{(t)}}{\norm{\P_i^{(t-1)\bot}\y_i^{(t)}}}\right \rangle} \geq a   \, \vert \,  G^{<t},\ih^t=\ih_0^t,\xi=\xi_0,\tilde{\U}^{\ih_0^t} = \tilde{\U}_0 \right).
	\end{align*}
	The inequality follows because $\norm{ \P_i^{(t-1)\bot}\u} \leq \norm{\u}$, which holds as $\P_i^{(t-1)\bot}$ is an orthogonal projector.
	By the previous discussion, we know the second term in this scalar product is a deterministic unit vector in the space orthogonal to $\mathcal{V}_i^{(t-1)}$ \footnote{This set is also deterministic as a consequence of the conditioned variables.}.   What remains to study is the distribution of
	$\frac{ \P_i^{(t-1)\bot}\u}{\norm{ \P_i^{(t-1)\bot}\u}}$. We wish to show that $\frac{ \P_i^{(t-1)\bot}\u}{\norm{ \P_i^{(t-1)\bot}\u}}$ is a uniformly distributed unit vector in the space orthogonal to $\mathcal{V}_i^{(t-1)}$. Let $\Z_i \in \R^{d/n\times d/n}$ be a rotation that lets the span of $\mathcal{V}_i^{(t-1)}$ invariant, i.e. $\Z_i ^T\u = \u = \Z_i\u$ for any $\u \in \mathcal{V}_i^{(t-1)}$.
	For a random variable $X$, let $p_{X}$ denote its density. We want to show the equality:
	\begin{align*}
		&\,\,p_{\U^{\ih_0^t}}(\U_0 \, \vert \, G^{<t}, \ih^t = \ih^t_0,\xi = \xi_0,\tilde{\U}^{\ih_0^t} = \tilde{\U}_0 ) \\ = &\,\,p_{\U^{\ih_0^t}}(\Z_i\U_0 \, \vert \,G^{<t}, \ih^t = \ih^t_0,  \xi = \xi_0,\tilde{\U}^{\ih_0^t} = \tilde{\U}_0 ),
	\end{align*}
	to show the distribution of $\frac{ \P_i^{(t-1)\bot}\u}{\norm{ \P_i^{(t-1)\bot}\u}}$ is indeed uniform.
	
	Let $\bar{\U}_0 = [\tilde{\U}_0, \U_0]$. We lighten the notation up a bit by omitting the random variables where they are clear from context. Using conditional densities:
	\begin{align*}
		p_{\U^{\ih_0^t}}(\U_0 \, \vert \, \ih^t_0,  G^{<t},\xi_0,\tilde{\U}_0 ) 
		&=\frac{\Pr( G^{<t}, \ih^t = \ih_0^t \, \vert \, \xi_0, \U_0, \tilde{\U}_0) p_{\xi,\bar{\U}^{\ih^t_0}}(\xi_0, \bar{\U}_0)}{\Pr(G^{<t}, \ih^t = \ih_0^t \, \vert \,\xi_0, \tilde{\U}_0 ) p_{\xi,\tilde{\U}^{\ih_0^t}}(\xi_0, \tilde{\U}_0)}
		\\ &= \frac{\Pr( G^{<t}, \ih^t = \ih_0^t \, \vert \, \xi_0, \bar{\U}_0) p_{\bar{\U}^{\ih^t_0}}( \bar{\U}_0)}{\Pr(G^{<t} ,\ih^t = \ih_0^t \, \vert \,\xi_0, \tilde{\U}_0 ) p_{\tilde{\U}^{\ih_0^t}}(\tilde{\U}_0)}.
	\end{align*}
	Plugging in $\Z_i \U_0$ and using $\Z_i \tilde{\U}_0 = \tilde{\U}_0$ we obtain
	\begin{align*}
		p_{\U^{\ih_0^t}}(\Z_i\U_0 \, \vert \, G^{<t}, \ih^t_0, \xi_0,\tilde{\U}_0 ) 
	&=\frac{\Pr( G^{<t}, \ih^t = \ih_0^t \, \vert \, \xi_0, \Z_i \bar{\U}_0) p_{\bar{\U}^{\ih^t_0}}( \Z_i\bar{\U}_0)}{\Pr(G^{<t}, \ih^t = \ih_0^t \, \vert \,\xi_0, \tilde{\U}_0 ) p_{\tilde{\U}^{\ih_0^t}}( \tilde{\U}_0)}.
	\end{align*}
	Because of the uniform distribution of $\B$ and thus also of $\bar{\U}^{\ih^t_0}$, it suffices to show that
	$$
		\Pr( G^{<t}, \ih^t = \ih_0^t \, \vert \, \xi_0, \bar{\U}_0)  = \Pr( G^{<t}, \ih^t = \ih_0^t \, \vert \, \xi_0, \Z_i\bar{\U}_0).
	$$	
	This probability is either 0 or 1, because we condition on all the randomness involved. We show by induction on $s \in [t]$ that $\Pr( G^{<t}, \ih^t = \ih_0^t \, \vert \, \xi_0, \bar{\U}_0) =1$ implies $\Pr( G^{<t}, \ih^s = \ih_0^s \, \vert \, \xi_0, \Z_i\bar{\U}_0) =1$. The other direction is analogous. 
	
	Therefore assume $\ih^t = \ih_0^t$ and that $ G^{<t}$ happens, conditioned on $\xi=\xi_0$ and $\bar{\U}^{\ih^t_0} = \bar{\U}_0$. 
	The base case is trivial, because $G^{<1}$ always happens. For the inductive step, let $s \geq 2$ and
	assume that $\ih^{s-1} = \ih_0^{s-1}$ and $ G^{<s-1}$ happen, conditioned on $\xi=\xi_0$ and $\bar{\U}^{\ih^t_0} = \Z_i\bar{\U}_0$ (induction hypothesis).  
	
	Let $i'^{s-1}$, $\x'^{(s-1)}$ denote the next index and iterate the algorithm produces, given $\bar{\U}^{\ih^t_0} = \Z_i\bar{\U}_0$.  By Lemma \ref{appendix:sublemma}, the induction hypothesis allows us to write for some $A_+^{(s-1)}$
	\begin{align}
		[i'^{s-1},\x'^{(s-1)}] &= A^{(s-1)}_+(\xi_0, \Z_i\tilde{\U}_0, \ih_0^{s-1})\nonumber \\
		&= A^{(s-1)}_+(\xi_0, \tilde{\U}_0, \ih_0^{s-1}) \nonumber \\
		&= [i^{s-1}, \x^{(s-1)}],		\label{eq:usefulequality_appendix}
		\end{align}
		where we also used $\Z_i\tilde{\U}_0 = \tilde{\U}_0$.
	This means that $\ih^s = \ih^s_0$ iff $\ih'^s = \ih^s_0$, which gets us halfway there. We just have to show that $G^{<s}$ happens as well, given $\bar{\U}^{\ih^t_0} = \Z_i\bar{\U}_0$. Of course, showing $G^{s-1}$ suffices, by the induction hypothesis. For this, let $\u \in \mathcal{U}^{(s-1)}$ and $i\in [n]$. We have
	\begin{align*}
	\left\langle \Z_i\u, \frac{\P_i^{(s-2)\bot} \y_i'^{(s-1)}}{\norm{\P_i^{(s-2)\bot} \y_i'^{(s-1)}}}\right\rangle 
	&= \left\langle \u, \Z_i^T\frac{\P_i^{(s-2)\bot} \y_i^{(s-1)}}{\norm{\P_i^{(s-2)\bot} \y_i^{(s-1)}}}\right\rangle \\
	&= \left\langle \u, \frac{\P_i^{(s-2)\bot} \y_i^{(s-1)}}{\norm{\P_i^{(s-2)\bot} \y_i^{(s-1)}}}\right\rangle.
	\end{align*}
	The first equality follows because $\P_i^{(s-2)\bot} \y_i'^{(s-1)} = \P_i^{(s-2)\bot} \y_i^{(s-1)}$ by
	\eqref{eq:usefulequality_appendix} and the second step follows because $\P_i^{(s-2)\bot} \y_i^{(s-1)} = \y_i^{(s-1)} - \P_i^{(s-2)} \y_i^{(s-1)}$ is in the span of $\mathcal{V}_i^{(s-1)} \subset \mathcal{V}_i^{(t)}$ and  left invariant by $\Z_i^T$. Thus $G^{s-1}$ holds as well, conditioned on $\bar{\U}^{\ih^t_0} = \Z_i\bar{\U}_0$.  
	
	 This concludes the inductive step and therefore our proof that 
	$\frac{ \P_i^{(t-1)\bot}\u}{\norm{ \P_i^{(t-1)\bot}\u}}$ is a uniformly distributed unit vector in a subspace of $\R^{d/n}$ of dimension at least 
	$$d' \geq d/n - \abs{\mathcal{V}_i^{(t-1)}} \geq d/n - (t-1) - \sum_{j=1}^{n}\min(I_j^{(t-1)},K) \geq d/n - 2t.$$ 
	We may write our probability to bound 
	\begin{align*}
		\Pr \left( \abs*{\left \langle\frac{ \P_i^{(t-1)\bot}\u}{\norm{ \P_i^{(t-1)\bot}\u}}, \frac{\P_i^{(t-1)\bot}\y_i^{(t)}}{\norm{\P_i^{(t-1)\bot}\y_i^{(t)}}}\right \rangle} \geq a   \, \vert \, G^{<t}, \ih^t=\ih_0^t,\xi=\xi_0,\tilde{\U}^{\ih_0^t} = \tilde{\U}_0 \right)
	\end{align*}
	as 
	$$\Pr(\abs{v_1} \geq a),$$
	where $\vvv$ is a uniformly distributed unit vector in $\R^{d'}$. This is because for the dot product, only the angle between the two vectors matters and with all conditioned variables, $\frac{\P_i^{(t-1)\bot}\y_i^{(t)}}{\norm{\P_i^{(t-1)\bot}\y_i^{(t)}}} $ is fixed so we may assume w.l.o.g. that it is equal to $\mathbf{e}_1$. By a standard concentration of measure bound on the sphere (see Lecture 8 in \citet{ball:concentration}) we get
	$$
		\Pr(\abs{v_1} \geq a)  = \Pr(\abs{v_1} > a) \leq 2e^{-d'a^2/2} \leq 2e^{-\frac{a^2}{2}(d/n- 2t)} \leq 2e^{-\frac{a^2}{2}(d/n- 2T)} .
	$$
	Returning to \eqref{eq:appendix:prob:union} we get for all $t \in [T]$ a bound for \eqref{eq:app:thingtobound} of
	\begin{align*}
		\Pr((G^{\leq t})^c  \, \vert \, G^{<t}) 
		\leq n \cdot nK \cdot 2e^{-\frac{a^2}{2}(d/n- 2T)},
	\end{align*}
	and therefore by \eqref{eq:app:motherofunions}
		\begin{align*}
	\Pr((G^{\leq T})^c  ) 
	\leq T \cdot n \cdot nK \cdot 2e^{-\frac{a^2}{2}(d/n- 2T)} 
	\leq 2(nK)(n^2K)e^{-\frac{a^2}{2}(d/n- 2T)}.
	\end{align*}
	Setting 
	$$
		d/n \geq \frac{2}{a^2} \log\left(\frac{2n^3K^2}{\delta}\right) + 2T
	$$
	gives us a probability $\delta$ bound. By the definitions of $a$ and $T$, the choice
	$$
		d/n \geq 2\max(9n^2K^2, 12nKR^2)\log\left(\frac{2n^3K^2}{\delta}\right) + nK
	$$
	suffices. This concludes the proof.
\end{proof}

The following Lemma is related to \citet{carmon:lower:i}, Lemma 5.

\begin{proof}{of Lemma \lemmagradbound{}}
	Fix $t \in \{0,\ldots,T\}$. For any $i \in [n]$, define $\y_i^{(t)} = \rho(\C_i^T \x^{(t)})$. Then Lemma~\lemmakey{} gives 
	for all $\u \in \mathcal{U}_i^{(t)}$ that $\abs{\langle \u, \y_i^{(t)} \rangle} < 1/2$. Therefore for each $i$ with $\mathcal{U}_i^{(t)} \not = \emptyset$ we have some $k \in [K]$ with 
	$$
		 \abs{\langle \bb_{i,k}, \y_i^{(t)} \rangle} < \frac{1}{2} < 1.
	$$
	With $\z = \B_i ^T\y_i^{(t)}$, by Lemma~\lemoriginallargegrad{} there exists an index $j \leq k$ with $ \abs{z_j} = \abs{\langle \bb_{i,j}, \y_i^{(t)} \rangle} < 1$ and
	$$	
		\abs*{\frac{\partial \bar{f}_K}{\partial z_j}(\B_i^T \y^{(t)}) } = \abs*{\frac{\partial \bar{f}_K}{\partial z_j}(\z) } > 1.
	$$
	Define $\tilde{f}_{K;B_i}(\y_i^{(t)}) = \bar{f}_K(\B_i^T\y_i^{(t)})$
	and recall the definitions of $\bar{f}_K$ and $\hat{f}_{K;\B_i}$. They give
	$$
	 \hat{f}_{K;\B_i}(\C_i^T\x) =  \tilde{f}_{K;\B_i}(\rho(\C_i^T\x)) + \frac{1}{10}\norm{\C_i^T\x}^2  = \bar{f}_K(\B_i^T\rho(\C_i^T\x)) + \frac{1}{10}\norm{\C_i^T\x}^2 .
	$$
	By the chain rule we have
	$$
		\B_i^T (\nabla \tilde{f}_{K;\B_i} (\y_i^{(t)})) = \B_i^T(\B_i\nabla \bar{f}_K(\B_i^T\y_i^{(t)})) = \nabla \bar{f}_K(\B_i^T\y_i^{(t)}).
	$$
	Combining this with the above we deduce that
	$$
		\abs*{\langle \bb_{i,j}, \nabla \tilde{f}_{K;\B_i}(\y_i^{(t)}) \rangle} = \abs*{\frac{\partial \bar{f}_K}{\partial z_j}(\B_i^T \y^{(t)}) } > 1.
	$$
	\citet{carmon:lower:i} show  that $\abs{\langle \bb_{i,j}, \y_i^{(t)} \rangle} < 1$ and $\abs*{\langle \bb_{i,j}, \nabla \tilde{f}_{K;\B_i}(\y_i^{(t)}) \rangle} > 1$ imply
	$$
		\norm{\nabla \hat{f}_{K;\B_i}(\C_i^T\x^{(t)})} > \frac{1}{{2}},
	$$
	where the gradient is w.r.t. the function argument, i.e. $\C_i^T \x^{(t)}$. They show this in the proof of Lemma 5, in the calculations following Equation (14) \footnote{With slightly different naming. Replace $U$ with $\B_i$ and $u^{(j)}$ with $\bb_{i,j}$, $T$ with $K$ and $x^{(t)}$ with $\C_i^T \x^{(t)}$. Also note that this is the part where the added regularization term in $\hat{f}$ is needed.}.
	
	The only thing that remains to show is that this indeed guarantees $\nabla F^*(\x^{(t)})$ to be large. Note that in each iteration, one of the $\mathcal{U}_i$'s shrinks in size by at most 1, while the others do not change. That means that after $t \leq T = \frac{nK}{2}$ iterations, at most $\lfloor n/2 \rfloor$ indices $i$ can have $\mathcal{U}_i^{(t)} = \emptyset$. Let $J \subset [K]$ be the set of those indices $i$ with $\mathcal{U}_i^{(t)} \not= \emptyset$. Then $\abs{J} \geq n/2$ and
	\begin{align*}
		\norm{\nabla F^*(\x^{(t)})}^2 &=  \norm{\frac{1}{n}\sum_{i=1}^n \nabla f^*_i(\x^{(t)})}^2 \\
		&=  \norm{\frac{1}{n}\sum_{i=1}^n \nabla \left[\hat{f}_{K;\B_i}   \left(\C_i^T\x^{(t)} \right) \right]}^2 \\
		&=  \norm{\frac{1}{n} \sum_{i=1}^n \C_i \nabla \hat{f}_{K;\B_i} \left(\C_i^T\x^{(t)} \right) }^2 \\	
		&= \frac{1}{n^2}\sum_{i=1}^n\sum_{j=1}^n \left(\nabla \hat{f}_{K;\B_i} \left(\C_i^T\x^{(t)} \right) \right)^T \C_i^T\C_j \nabla \hat{f}_{K;\B_j} \left({\C_j^T\x^{(t)}} \right) \\
		&\stackrel{(*)}{=} \frac{1}{n^2}\sum_{i=1}^n \left(\nabla \hat{f}_{K;\B_i} \left(\C_i^T\x^{(t)} \right) \right)^T \C_i^T\C_i \nabla \hat{f}_{K;\B_i} \left({\C_i^T\x^{(t)}} \right) \\
		&= \frac{1}{n^2}\sum_{i=1}^n \norm{\nabla \hat{f}_{K;\B_i} \left(\C_i^T\x^{(t)} \right)  }^2 \\
		&\geq \frac{1}{n^2}  \sum_{i\in J}\norm{\nabla \hat{f}_{K;\B_i} \left(\C_i^T\x^{(t)} \right)  }^2 \\
		&\geq \frac{1}{n^2}  \frac{n}{2}\frac{1}{4} \\
		&\geq \frac{1}{16n}.
	\end{align*}
	where $(*)$ is because of the definition of $\C \in \ortho(d,d)$.
\end{proof}

% macros for algorithm variables
\renewcommand{\curir}{\x_t^s} % current iterate
\renewcommand{\estim}{\v_t^s} % gradient estimator
\renewcommand{\hestim}{\U_t^s} % hessian estimator
\renewcommand{\snap}{\widehat{\x}^s} % snapshot
\renewcommand{\snapgrad}{\mathbf{g}^s}
\renewcommand{\snaphess}{\mathbf{H}^s}
\renewcommand{\step}{\mathbf{h}_t^s}

% gradient and hessian at the current iterate
\renewcommand{\grdf}{\nabla F} % gradient of f
\renewcommand{\hesf}{\nabla^2 F} % hessian of f
\renewcommand{\grdfc}{\grdf(\curir)} % gradient of f at current point
\renewcommand{\hesfc}{\hesf(\curir)} % hessian of f at current point
\renewcommand{\grdfi}{\nabla f_{i_t}} % gradient of f_i_t
\renewcommand{\hesfi}{\nabla^2 f_{i_t}} % hessian of f_i_t
\renewcommand{\grdfj}{\nabla f_{i_t}} % gradient of f_j_t
\renewcommand{\hesfj}{\nabla^2 f_{j_t}} % hessian of f_j_t

% differences and norms for the main estimators
\renewcommand{\xdiff}{\curir - \snap} % difference of current iter and snapshot
\newcommand{\xdiffnorm}{\norm{\xdiff}} % distance between current iter and snapshot
\newcommand{\ycut}{\mathbf{Y}_{j_t}} % shortcut for lemma:svrc31
\newcommand{\stepnorm}{\norm{\step}}
\newcommand{\grddiff}{\grdfc - \estim} % main estimator lemma difference
\newcommand{\hesdiff}{\hesfc - \hestim} % main estimator lemma difference
\newcommand{\grddiffnorm}{\norm{\grddiff}} % norm of main gradient estimator lemma difference
\newcommand{\hesdiffnorm}{\norm{\hesdiff}} % norm of main hessian estimator lemma difference
\newcommand{\fidiffnesres}{\grdfi(\curir) - \grdfi(\snap) - \hesfi(\snap)(\curir - \snap)} % the F_i_t term in result of lemma:modnesterov, as used in lemma:svrc25
\newcommand{\fdiffnesres}{\grdf(\curir) - \grdf(\snap) - \hesf(\snap)(\curir - \snap)} % the F term the nesterov lemma bounds, as used in lemma:svrc25

% expected value variants
\newcommand{\Ei}{\E_{i_t}}
\newcommand{\Ej}{\E_{j_t}}

\newcommand{\lammin}{\lambda_\mathrm{min}}
\newcommand{\xout}{\x_{\mathrm{out}}}
\newcommand{\lamhes}{\lammin(\hesf(\x_{\mathrm{out}}))}

\section{Proof of results under Assumption \assthird{}}
\label{appendix:C}
The convergence analysis in this section closely follows \citet{zhou:svrc}, many parts of which are be left unchanged. We argue that this supports our claim that Assumption~\ref{assumption:third} is a natural smoothness assumption.

\subsection{Proof of Theorem \theothirdup{}}
Recall the terminology in Algorithm~\ref{algorithm:svrc}. We will commonly call $\estim$ and $\hestim$ the gradient and Hessian estimators respectively, we will refer to $\snap$ as the snapshot point, and to $\step$ as the step. Finally, we will define
$$
m_t^s(\mathbf{h}) = \inner{\estim}{\mathbf{h}} + \frac{1}{2}\inner{\hestim \mathbf{h}}{ \mathbf{h}} + \frac{M}{6}\norm{\mathbf{h}}^3,
$$
so that $\step = \arg\min_{\mathbf{h}}m_t^s(\mathbf{h})$.
To aid in the analysis, we define the following quantity also introduced in \citet{zhou:svrc}:
$$
\mu(\x) = \max \left\{ \norm{\grdf(\x)}^{3/2}, -\frac{\lambda_\mathrm{min}^3(\hesf(\x))}{L_{2}^{3/2}}\right\}.
$$
Whenever $\mu(\x) \leq \epsilon^{3/2}$, $\x$ is an $\epsilon$-approximate local minimum \cite{zhou:svrc}. In Section~\ref{sec:svrc_theo_proof}, we will show that we can bound the expected value of this quantity as follows (see also Theorem 6 in \citet{zhou:svrc}):
\begin{theorem}
\label{svrc:theorem}
Let $M = C_M L_2$ for $C_M = 150$. Let $T \geq 2$ and choose $b_g \geq 5T^4$ and $b_h \geq 3000T^2\log^3 d$. Then 
$$
    \E[\mu(\x_{\mathrm{out}})] \leq \frac{240 C_M^2L_2^{1/2}\Delta}{ST}.
$$
\end{theorem}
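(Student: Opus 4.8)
The plan is to follow the epoch-based analysis of SVRC in \citet{zhou:svrc}, replacing every appeal to pointwise Hessian-Lipschitzness of the individual $f_i$'s by a moment estimate coming from Assumption~\ref{assumption:third}. The skeleton is: (i) bound the errors $\grddiffnorm$ and $\hesdiffnorm$ of the gradient and Hessian estimators used at step $t$ of epoch $s$ in terms of $\xdiffnorm$; (ii) invoke the Nesterov--Polyak estimates for one inexact cubic-regularized step, which hold whenever $M\ge L_2$ and yield both a descent inequality and a near-stationarity certificate $\mu(\x_{t+1}^s)\lesssim C_M^{3}L_2^{3/2}\stepnorm^{3}+(\text{estimator errors})$; (iii) telescope $F$ over the whole run -- the epochs chain, since $\x_0^{s}=\snap$ and $\widehat{\x}^{s+1}=\x_T^{s}$, so $\sum_{s,t}\bigl(F(\curir)-F(\x_{t+1}^s)\bigr)=F(\x_0)-F(\widehat{\x}^{S+1})\le\Delta$ by Assumption~\ref{assumption:individual}~ii); (iv) absorb the accumulated estimator errors into the descent using the prescribed batch sizes and the bookkeeping identity $\xdiff=\sum_{\tau<t}\mathbf{h}_\tau^s$.

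For (i), the gradient estimator satisfies $\estim-\grdfc=\tfrac1{b_g}\sum_{i_t\in I_g}\bigl(r_{i_t}(\curir)-\E_{i_t}r_{i_t}(\curir)\bigr)$ with $r_i(\x)=\nabla f_i(\x)-\nabla f_i(\snap)-\nabla^2 f_i(\snap)(\x-\snap)$; writing $r_i$ as an integral of $\nabla^2 f_i(\cdot)-\nabla^2 f_i(\snap)$ over $[\snap,\curir]$ and combining Minkowski with Assumption~\ref{assumption:third} gives $\bigl(\E_i\norm{r_i(\curir)}^3\bigr)^{1/3}\le\tfrac12 L_2\xdiffnorm^2$. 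Since $\estim-\grdfc$ is a normalized sum of $b_g$ i.i.d.\ centered vectors, the Hilbert-space bound $\E\norm{\,\cdot\,}^{3/2}\le(\E\norm{\,\cdot\,}^2)^{3/4}$ gives $\E\,\grddiffnorm^{3/2}\lesssim b_g^{-3/4}L_2^{3/2}\xdiffnorm^{3}$, with no dimension dependence. The Hessian estimator error $\hestim-\hesfc$ is a normalized sum of $b_h$ i.i.d.\ centered \emph{matrices}, the expectation of whose cubed operator norm is $\lesssim L_2^3\xdiffnorm^3$ by Assumption~\ref{assumption:third}; but -- unlike in \citet{zhou:svrc} -- these summands are not uniformly bounded, so I would here invoke a noncommutative Rosenthal / matrix-Khintchine inequality, which costs a $\mathrm{polylog}(d)$ factor (namely $\log^3 d$) and yields $\E\,\hesdiffnorm^{3}\lesssim b_h^{-3/2}L_2^3\xdiffnorm^{3}\log^3 d$. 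This matrix-moment estimate -- and the fact that it is what forces the extra polylog in $b_h$ relative to \citet{zhou:svrc} -- is the one genuinely new ingredient.

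For (ii)--(iii), plugging the estimator bounds into the standard inexact-cubic-step lemmas and taking conditional expectations gives, for absolute constants $c_1,c_2,c_3$,
\[
\E F(\x_{t+1}^s)\le\E F(\curir)-c_1 M\,\E\stepnorm^{3}+c_2 M^{-1/2}\,\E\grddiffnorm^{3/2}+c_3 M^{-2}\,\E\hesdiffnorm^{3},
\]
together with the analogous inequality in which the left side is $\E\mu(\x_{t+1}^s)$, the leading term is $C_M^{3}L_2^{3/2}\E\stepnorm^{3}$, and the error terms are again bounded by multiples of $\E\grddiffnorm^{3/2}$ and $\E\hesdiffnorm^{3}$. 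Summing the first inequality over $t\in[0:T-1]$ and $s\in[S]$ telescopes its left side against $\Delta$; for the error sums I would use the power-mean estimate $\xdiffnorm^{3}=\norm{\sum_{\tau<t}\mathbf{h}_\tau^s}^{3}\le t^2\sum_{\tau<t}\norm{\mathbf{h}_\tau^s}^{3}$, so that $\sum_{t}\xdiffnorm^{3}\le T^3\sum_{\tau}\norm{\mathbf{h}_\tau^s}^{3}$. Combined with $b_g^{3/4}\ge 5^{3/4}T^3$ and $b_h^{3/2}\ge 3000^{3/2}T^3\log^3 d$, this makes each of $\sum_{s,t}M^{-1/2}\E\grddiffnorm^{3/2}$ and $\sum_{s,t}M^{-2}\E\hesdiffnorm^{3}$ at most a $C_M$-dependent small-constant times $L_2\sum_{s,t}\E\stepnorm^{3}$, hence -- since $C_M=150$ is large -- absorbable into the $c_1 M\sum_{s,t}\E\stepnorm^{3}$ term. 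What remains is $\sum_{s,t}\E\stepnorm^{3}\lesssim \Delta/M$.

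Finally, summing the stationarity certificate over all $ST$ steps and absorbing its error terms in exactly the same way gives $\sum_{s,t}\E\mu(\curir)\lesssim C_M^{3}L_2^{3/2}\sum_{s,t}\E\stepnorm^{3}\lesssim C_M^{3}L_2^{3/2}\,\Delta/M=C_M^{2}L_2^{1/2}\Delta$; since $\xout$ is drawn uniformly from the $ST$ iterates, dividing by $ST$ and tracking the numerical constants yields $\E[\mu(\xout)]\le \tfrac{240\,C_M^{2}L_2^{1/2}\Delta}{ST}$. I expect the main obstacle to be exactly the matrix-moment step: choosing a noncommutative inequality that is simultaneously sharp at the third moment (a second-moment bound would not match the cubic error terms) and carries only polylogarithmic dimension dependence, so that $b_h$ can stay $\tilde{\Theta}(T^2)$; the rest -- chaining the epochs, the power-mean bound on $\xdiffnorm$, and matching the prescribed values of $T$, $b_g$, $b_h$ and $S$ -- should go through essentially as in \citet{zhou:svrc}.
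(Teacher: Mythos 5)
Your proposal is essentially correct and captures both the key ingredients: the estimator bounds derived from Assumption~\ref{assumption:third} (with the matrix-moment inequality paying a $\log^3 d$ factor in $b_h$) and the telescoping of the one-step descent and stationarity certificates. Your identification of the noncommutative matrix moment estimate as ``the one genuinely new ingredient'' matches the paper: the paper invokes Lemma 32 of \citet{zhou:svrc} and, because the individual Hessian differences are no longer pointwise bounded, replaces the $\max$ term there by a sum over the batch, which is precisely what forces the polylogarithmic inflation of $b_h$. Your gradient-estimator step also matches the paper's Lemma~\ref{lemma:gradientestimator}, which is driven by the second-moment version of the Nesterov estimate (Lemma~\ref{lemma:modnesterov}) together with the Hilbert-space bound $\E\norm{\cdot}^{3/2}\le(\E\norm{\cdot}^2)^{3/4}$ (Lemma~\ref{lemma:svrc31}).

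The one place you genuinely diverge from the paper is the amortization of the accumulated $\xdiffnorm^3$ error terms. You use the power-mean estimate $\norm{\sum_{\tau<t}\mathbf{h}_\tau^s}^3\le t^2\sum_{\tau<t}\norm{\mathbf{h}_\tau^s}^3$ and hence $\sum_t\xdiffnorm^3\lesssim T^3\sum_\tau\norm{\mathbf{h}_\tau^s}^3$, then rely on the batch-size requirements to make the resulting error smaller than the descent. The paper instead introduces the Lyapunov potential $R_t^s=\E[F(\curir)+c_t\xdiffnorm^3]$ with a decreasing coefficient sequence $c_T=0$, $c_t=c_{t+1}(1+3/T)+M(500T^3)^{-1}$, and uses Lemmas~\ref{lemma:svrc29} and \ref{lemma:svrc30} (carried over from \citet{zhou:svrc}) to obtain a per-step recursion $R_{t+1}^s+(240C_M^2L_2^{1/2})^{-1}\E\mu(\x_{t+1}^s)\le R_t^s$ that telescopes cleanly. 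The two strategies are equivalent in spirit and both absorb errors of size $\sim T^3\sum\E\stepnorm^3$ against a descent of $\sim M\sum\E\stepnorm^3$ using the prescribed $b_g\ge 5T^4$, $b_h\ge 3000T^2\log^3 d$. The Lyapunov route is slightly more finicky to set up but is what the paper needs to land exactly on the constant $240\,C_M^2$; your direct summation would need its constants retuned, and you correctly flag this. Neither is a gap -- just a different bookkeeping device for the same inequality.
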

Using this, we proceed with the proof of the main upper bound result.

\begin{proof}{of Theorem \theothirdup{}}
We first check that in the setting of Theorem \theothirdup{}, the assumptions of Theorem~\ref{svrc:theorem} hold. It is clear that $T \geq 2$ and that $b_g = 5 \max\{n^{4/5}, 2^4\} = 5T^4$. Further, $b_h = 3000 \max\{4, n^{2/5}\} \log^3 d = 3000 T^2 \log^3 d$. Plugging in the choices of $S$ and $T$ into the result of Theorem~\ref{svrc:theorem}, one gets
$$
    \E[\mu(\x_{\mathrm{out}})] \leq \frac{240 C_M^2L_2^{1/2}\Delta}{ST} \leq \frac{240 C_M^2L_2^{1/2}\Delta}{\max\{ 1, 240 C_M^2 L_2^{1/2} \Delta n^{-1/5}\epsilon^{-3/2}\} \max\{2,n^{1/5}\}} \leq \epsilon^{3/2},
$$ as desired.
In particular, we have
$$
    \E [\norm{\grdf(\x_{\mathrm{out}})}]^{3/2} \leq \E [\norm{\grdf(\x_{\mathrm{out}})} ^{3/2} ]\leq \epsilon^{3/2},
$$
allowing comparison with our lower bound from Theorem~\theothirdlow{}.

During each epoch, $n$ oracle calls are needed to construct $\snapgrad$ and $\snaphess$, requiring $Sn$ calls overall. To compute $\estim$ and $\hestim$, we need 
$$
 b_g + b_h = 5 \max\{n^{4/5}, 2^4\} + 3000 \max\{4, n^{2/5}\} \log^3
$$
oracle queries at each iteration, requiring $ST(b_g + b_h)$ calls over all epochs and iterations. The total number of oracle queries is therefore at most
\newcommand{\const}{240 C_M^2L_2^{1/2}\Delta}
\begin{align*}
    & \quad\,\, Sn + ST(b_g + b_h)  \\
    &= \max\{ 1, 240 C_M^2 L_2^{1/2} \Delta n^{-1/5}\epsilon^{-3/2}\} n \\ & \quad \quad
    + (\max\{ 1, 240 C_M^2 L_2^{1/2} \Delta n^{-1/5}\epsilon^{-3/2}\})(\max\{2,n^{1/5}\})(5 \max\{n^{4/5}, 2^4\} + 3000 \max\{4, n^{2/5}\} \log^3 d) \\
    &\leq \tilde{\upp} \left( n+\frac{\Delta L_2^{1/2} n^{4/5}}{\epsilon^{3/2}} \right).
\end{align*}
\end{proof}

\subsection{Proof of Theorem \ref{svrc:theorem}}
\label{sec:svrc_theo_proof}
We will need some auxiliary lemmas to conduct the proof. The first is a version of Lemma 1 from \citet{nesterov:cr}, but tailored to our finite-sum setting.

\begin{lemma}
\label{lemma:modnesterov}
Let $F = \frac{1}{n}\sum f_i$ satisfy Assumption~\assthird{}. Then we have for any $\x$ and $\y$\emph{:}
$$
\E_i\left[\norm{\nabla f_i(\y) - \nabla f_i(\x) - \nabla^2 f_i(\x)(\y - \x)}^2 \right] \leq  \frac{1}{3}L_2^2 \norm{\x-\y}^4.
$$
and for any $\mathbf{h}$\emph{:}
$$
F(\x+\mathbf{h}) \leq F(\x) + \inner{\grdf(\x)}{\mathbf{h}}+\frac{1}{2}\inner{\hesf(\x)\mathbf{h}}{\mathbf{h}}+\frac{L_2}{6}\norm{\mathbf{h}}^3.
$$
\end{lemma}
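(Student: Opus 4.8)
The plan is to obtain both bounds from Taylor's theorem with integral remainder, and to pass from the third-moment hypothesis of Assumption~\ref{assumption:third} to the quantities actually needed via Jensen's inequality (convexity of the norm) and the power-mean inequality $(\E|X|^2)^{1/2}\le(\E|X|^3)^{1/3}$.

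\textbf{First inequality.} Fix $\x,\y$ and write $\mathbf{d}=\y-\x$. Applying the fundamental theorem of calculus to $t\mapsto\nabla f_i(\x+t\mathbf{d})$ gives the exact identity
\[
\nabla f_i(\y)-\nabla f_i(\x)-\nabla^2 f_i(\x)\mathbf{d}=\int_0^1\big[\nabla^2 f_i(\x+t\mathbf{d})-\nabla^2 f_i(\x)\big]\mathbf{d}\,\mathrm{d}t .
\]
Taking the tensor operator norm, moving it inside the integral, and then applying Cauchy--Schwarz in $t\in[0,1]$ (against the constant $1$) yields
\[
\norm{\nabla f_i(\y)-\nabla f_i(\x)-\nabla^2 f_i(\x)\mathbf{d}}^2\le\norm{\mathbf{d}}^2\int_0^1\norm{\nabla^2 f_i(\x+t\mathbf{d})-\nabla^2 f_i(\x)}^2\,\mathrm{d}t .
\]
Now I would take $\E_i$, interchange it with the $t$-integral (Tonelli), and for each fixed $t$ use the power-mean inequality followed by Assumption~\ref{assumption:third} applied to the pair $(\x+t\mathbf{d},\x)$, which has $\norm{(\x+t\mathbf{d})-\x}=t\norm{\mathbf{d}}$:
\[
\E_i\norm{\nabla^2 f_i(\x+t\mathbf{d})-\nabla^2 f_i(\x)}^2\le\Big(\E_i\norm{\nabla^2 f_i(\x+t\mathbf{d})-\nabla^2 f_i(\x)}^3\Big)^{2/3}\le L_2^2\,t^2\norm{\mathbf{d}}^2 .
\]
Since $\int_0^1 t^2\,\mathrm{d}t=\tfrac13$, plugging in gives exactly $\tfrac13 L_2^2\norm{\x-\y}^4$.

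\textbf{Second inequality.} The key preliminary observation is that $F$ itself has $L_2$-Lipschitz Hessian: by convexity of the norm (Jensen) and then the power-mean inequality,
\[
\norm{\hesf(\x)-\hesf(\y)}=\norm{\E_i\big[\nabla^2 f_i(\x)-\nabla^2 f_i(\y)\big]}\le\E_i\norm{\nabla^2 f_i(\x)-\nabla^2 f_i(\y)}\le\Big(\E_i\norm{\nabla^2 f_i(\x)-\nabla^2 f_i(\y)}^3\Big)^{1/3}\le L_2\norm{\x-\y}.
\]
With this in hand I would invoke the standard cubic descent estimate for a function with $L_2$-Lipschitz Hessian: Taylor with integral remainder gives $F(\x+\mathbf{h})=F(\x)+\inner{\grdf(\x)}{\mathbf{h}}+\int_0^1(1-t)\inner{\hesf(\x+t\mathbf{h})\mathbf{h}}{\mathbf{h}}\,\mathrm{d}t$; subtracting $\tfrac12\inner{\hesf(\x)\mathbf{h}}{\mathbf{h}}$, bounding $\abs{\inner{(\hesf(\x+t\mathbf{h})-\hesf(\x))\mathbf{h}}{\mathbf{h}}}\le L_2\,t\norm{\mathbf{h}}^3$, and integrating $\int_0^1(1-t)t\,\mathrm{d}t=\tfrac16$ gives the claimed inequality.

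\textbf{Main obstacle.} There is no serious obstacle here — this is essentially a one-line adaptation of Lemma~1 in \citet{nesterov:cr}. The only point requiring care is that Assumption~\ref{assumption:third} controls a \emph{moment} of the Hessian difference rather than a pointwise Lipschitz constant, so one must keep the Hessian difference inside the $t$-integral until after taking $\E_i$, and at each step be deliberate about whether to invoke Jensen (convexity of the norm) or the power-mean inequality (comparing second and third moments). I would present the two parts in this order since the $L_2$-Lipschitz-Hessian property of $F$ proved along the way to the second inequality is used repeatedly in the subsequent convergence analysis.
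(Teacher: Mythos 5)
Your proof is correct and follows essentially the same route as the paper: the first inequality via the integral form of the Taylor remainder, Cauchy--Schwarz, Tonelli, and the power-mean inequality applied to Assumption~\ref{assumption:third}, and the second by first deducing that $F$ has $L_2$-Lipschitz Hessian (the paper simply cites \citet{nesterov:cr} at this step, after having observed the Lipschitz property earlier via Jensen). Incidentally, the integrand in your Taylor identity, $\nabla^2 f_i(\x+t\mathbf{d})-\nabla^2 f_i(\x)$, is the correct one; the paper writes $\nabla^2 f_i(\x+\tau(\y-\x))-\nabla^2 f_i(\y)$, which is a typo, though the final bound is unaffected since $\int_0^1 t^2\,dt=\int_0^1(1-t)^2\,dt=\tfrac13$.
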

The second statement is taken directly from \citet{nesterov:cr}.
We also take the following lemma directly from \citet{zhou:svrc}. Its proof exploits the optimality of $\step$.
\begin{lemma}[Lemma 24 in \citet{zhou:svrc}]
\label{lemma:svrc24}
For the iterates in Algorithm~\ref{algorithm:svrc} under the assumptions of Theorem~\theothirdup{} we have 
\begin{align*}
\estim + \hestim \step + \frac{M}{2}\stepnorm \step &= 0, \\
\hestim + \frac{M}{2}\stepnorm \mathbf{I} \succeq 0, \\
\inner{\estim}{\step} + \frac{1}{2}\inner{\hestim\step}{\step} + \frac{M}{6}\stepnorm^3 \leq -\frac{M}{12}\stepnorm^3.
\end{align*}
\end{lemma}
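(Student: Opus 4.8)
The plan is to recognize this as the classical Nesterov--Polyak characterization of the global minimizer of a cubic-regularized quadratic model \citep{nesterov:cr}, specialized to
$$m_t^s(\mathbf{h}) = \inner{\estim}{\mathbf{h}} + \tfrac12 \inner{\hestim \mathbf{h}}{\mathbf{h}} + \tfrac{M}{6}\norm{\mathbf{h}}^3,$$
of which $\step$ is a global minimizer by definition. Write $r = \stepnorm$ for brevity. Since $\nabla(\tfrac{M}{6}\norm{\mathbf{h}}^3) = \tfrac{M}{2}\norm{\mathbf{h}}\,\mathbf{h}$ and $m_t^s$ is $C^1$ on all of $\R^d$, the first-order stationarity condition $\nabla m_t^s(\step) = \mathbf{0}$ is exactly the first displayed identity, $\estim + \hestim\step + \tfrac{M}{2} r\,\step = \mathbf{0}$.

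For the positive-semidefiniteness relation I would \emph{not} invoke the local second-order test $\nabla^2 m_t^s(\step)\succeq 0$, which is too weak here (discarding the rank-one term $\tfrac{M}{2}\step\step^\top/r$ does not preserve semidefiniteness), but rather global optimality. Substituting $\estim = -\hestim\step - \tfrac{M}{2} r\,\step$ into the inequality $m_t^s(\mathbf{h}) - m_t^s(\step) \ge 0$, using $\inner{\step}{\mathbf{h}} = \tfrac12(r^2 + \norm{\mathbf{h}}^2 - \norm{\mathbf{h}-\step}^2)$ to complete the square, and invoking the algebraic identity $2t^3 - 3rt^2 + r^3 = (t-r)^2(2t+r)$, one obtains, for every $\mathbf{h}\in\R^d$,
$$\tfrac12\inner{\big(\hestim + \tfrac{M}{2} r\,\mathbf{I}\big)(\mathbf{h}-\step)}{\mathbf{h}-\step} + \tfrac{M}{12}(\norm{\mathbf{h}}-r)^2(2\norm{\mathbf{h}}+r) \ge 0.$$
Restricting to $\mathbf{h}$ with $\norm{\mathbf{h}} = r$ annihilates the second summand, leaving $\inner{(\hestim + \tfrac{M}{2} r\,\mathbf{I})(\mathbf{h}-\step)}{\mathbf{h}-\step}\ge 0$ for all such $\mathbf{h}$. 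For any unit vector $\mathbf{u}$ with $\inner{\step}{\mathbf{u}}\ne 0$ the choice $\mathbf{h} = \step - 2\inner{\step}{\mathbf{u}}\,\mathbf{u}$ satisfies $\norm{\mathbf{h}} = r$ and $\mathbf{h}-\step \parallel \mathbf{u}$, hence $\inner{(\hestim + \tfrac{M}{2} r\,\mathbf{I})\mathbf{u}}{\mathbf{u}}\ge 0$; directions $\mathbf{u}\perp\step$ follow by continuity, giving $\hestim + \tfrac{M}{2}\stepnorm\,\mathbf{I}\succeq 0$. (If $r = 0$ then $\estim = \mathbf{0}$ and all three relations are immediate.)

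The last inequality is then pure algebra. Taking the inner product of the first identity with $\step$ gives $\inner{\estim}{\step} = -\inner{\hestim\step}{\step} - \tfrac{M}{2} r^3$, whence
$$\inner{\estim}{\step} + \tfrac12\inner{\hestim\step}{\step} + \tfrac{M}{6} r^3 = -\tfrac12\inner{\hestim\step}{\step} - \tfrac{M}{3} r^3;$$
applying the semidefiniteness relation to the vector $\step$ yields $\inner{\hestim\step}{\step}\ge -\tfrac{M}{2} r^3$, so the right-hand side is at most $\tfrac{M}{4} r^3 - \tfrac{M}{3} r^3 = -\tfrac{M}{12}\stepnorm^3$, as claimed. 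The one step that needs genuine care is the semidefiniteness relation — using global rather than merely local optimality, and passing to the limit for the directions orthogonal to $\step$; everything else is bookkeeping.
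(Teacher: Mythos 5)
Your proof is correct. The paper does not actually supply a proof of this lemma: it imports it verbatim as Lemma~24 of \citet{zhou:svrc}, adding only the one-line hint that ``its proof exploits the optimality of $\step$.'' Your reconstruction is faithful to that hint and to the standard Nesterov--Polyak/Cartis--Gould--Toint characterization of the global minimizer of a cubic-regularized quadratic. A few points worth flagging as \emph{correct and necessary} choices on your part: (i)~the first identity is indeed just first-order stationarity, since $m_t^s$ is $C^1$ with $\nabla m_t^s(\mathbf{h}) = \estim + \hestim\mathbf{h} + \tfrac{M}{2}\norm{\mathbf{h}}\mathbf{h}$; (ii)~you are right that the local Hessian test is insufficient --- the Hessian of the cubic term at $\step \neq 0$ is $\tfrac{M}{2}\bigl(\stepnorm\,\mathbf{I} + \step\step^\top/\stepnorm\bigr)$, and discarding the rank-one part is not semidefiniteness-preserving, so global optimality is genuinely needed; (iii)~the completion of the square and the factorization $2t^3 - 3rt^2 + r^3 = (t-r)^2(2t+r)$ check out, and restricting to $\norm{\mathbf{h}} = r$ with the reflection $\mathbf{h} = \step - 2\inner{\step}{\mathbf{u}}\mathbf{u}$ (plus density/continuity for $\mathbf{u}\perp\step$) cleanly extracts $\hestim + \tfrac{M}{2}\stepnorm\,\mathbf{I}\succeq 0$; and (iv)~the $r=0$ degenerate case is handled (one needs the small observation that $\hestim\succeq 0$ then follows from $m_t^s(\mathbf{h})\ge m_t^s(\mathbf{0})=0$ by dividing by $\norm{\mathbf{h}}^2$ and letting $\mathbf{h}\to\mathbf{0}$, which is what you implicitly invoke). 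The final inequality is then, as you say, pure bookkeeping from the first two. In short: since the paper defers this proof to an external reference, your derivation is a complete and correct self-contained substitute for it, matching the approach the paper points to.
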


The two following lemmas resemble Lemmas 25 and 26 in \citet{gu:lower} and bound the variances of the gradient and Hessian estimators of SVRC. Under the new smoothness Assumption~\assthird{}, some constant factors change and the batch size for the Hessian estimator must comply to some stronger requirements, but otherwise third-moment smoothness is a viable alternative to an individual smoothness assumption.
The proofs are analogous to the proofs of their respective counterparts.

The first lemma bounds the variance of $\estim$:
\begin{lemma}
\label{lemma:gradientestimator}
The gradient estimator $\estim$ in Algorithm~\ref{algorithm:svrc} satisfies
$$
    \Ei \norm{\nabla F(\curir) - \estim}^{3/2} \leq \frac{2L_2^{3/2}}{b_g^{3/4}}\norm{\curir - \snap}^3,
$$
where $\Ei$ is the expectation over the batch indices $i_t \in I_g$.
\end{lemma}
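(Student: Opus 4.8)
The plan is to recognize the estimation error $\grdf(\curir) - \estim$ as a centered mini-batch average and then bound its $3/2$-th moment by interpolating down from the second moment, where Lemma~\ref{lemma:modnesterov} applies. For each $i \in [n]$ set
$$
Y_i \;:=\; \nabla f_i(\curir) - \nabla f_i(\snap) - \nabla^2 f_i(\snap)(\xdiff),
$$
the second-order Taylor residual of $f_i$ at $\snap$. A direct rearrangement of the definition of $\estim$ in Algorithm~\ref{algorithm:svrc}, using $\snapgrad = \grdf(\snap)$, $\snaphess = \hesf(\snap)$ and the identity $\tfrac1n\sum_{i=1}^n Y_i = \grdf(\curir) - \grdf(\snap) - \hesf(\snap)(\xdiff)$, yields
$$
\grdf(\curir) - \estim \;=\; \E_i[Y_i] \;-\; \frac{1}{b_g}\sum_{i_t \in I_g} Y_{i_t}.
$$
In other words, the SVRG-type gradient-difference terms together with the Hessian-correction term $\big(\tfrac1{b_g}\sum_{i_t\in I_g}\nabla^2 f_{i_t}(\snap) - \snaphess\big)(\xdiff)$ are exactly what is needed so that the error collapses to the deviation of an empirical average of the $Y_{i_t}$ over the batch $I_g$ from its mean.

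Next I would apply Jensen's inequality in the form $\Ei\norm{W}^{3/2} = \Ei\big(\norm{W}^2\big)^{3/4} \le \big(\Ei\norm{W}^2\big)^{3/4}$, valid since $t\mapsto t^{3/4}$ is concave, with $W = \grdf(\curir) - \estim$. Since $I_g$ consists of $b_g$ indices drawn uniformly from $[n]$ (with replacement; sampling without replacement only decreases the variance), $W$ is the deviation of an average of $b_g$ i.i.d.\ copies of $Y_i$ from its mean, so
$$
\Ei\norm{W}^2 \;=\; \frac{1}{b_g}\,\mathrm{Var}_i(Y_i) \;\le\; \frac{1}{b_g}\,\E_i\norm{Y_i}^2.
$$
Lemma~\ref{lemma:modnesterov} bounds $\E_i\norm{Y_i}^2 = \E_i\norm{\nabla f_i(\curir) - \nabla f_i(\snap) - \nabla^2 f_i(\snap)(\xdiff)}^2 \le \tfrac13 L_2^2\norm{\xdiff}^4$. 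Chaining the three estimates gives
$$
\Ei\norm{\grdf(\curir) - \estim}^{3/2} \;\le\; \frac{1}{b_g^{3/4}}\Big(\tfrac13 L_2^2\norm{\xdiff}^4\Big)^{3/4} \;=\; \frac{3^{-3/4}L_2^{3/2}}{b_g^{3/4}}\,\norm{\xdiff}^3 \;\le\; \frac{2L_2^{3/2}}{b_g^{3/4}}\,\norm{\xdiff}^3,
$$
since $3^{-3/4} < 2$, which is the asserted bound (in fact with a constant better than the stated $2$).

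The only place that requires care is the algebraic bookkeeping in the first step: one must keep straight which sums run over the mini-batch $I_g$ and which over all $n$ components, and verify that the Hessian-correction term is precisely the piece that turns the naive semi-stochastic gradient estimator into the centered average of the Taylor residuals $Y_i$. After that identification, the remainder is a routine interpolation-plus-variance computation together with an invocation of Lemma~\ref{lemma:modnesterov}, and I expect no substantive obstacle. One caveat worth noting is that the argument uses in an essential way that the batch indices are drawn uniformly from $[n]$, since Lemma~\ref{lemma:modnesterov} controls the \emph{average} squared residual over a uniform draw of the component index.
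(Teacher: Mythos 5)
Your proof is correct and follows essentially the same route as the paper: rewrite $\grdf(\curir)-\estim$ as a centered average of the Taylor residuals $Y_i$, interpolate the $3/2$-moment down to a variance via Jensen (the paper packages this as its Lemma~\ref{lemma:svrc31}), and invoke Lemma~\ref{lemma:modnesterov}. The only divergence is in the last bookkeeping step: the paper bounds $\Ei\norm{Y_i-\E_i Y_i}^2$ crudely via $\norm{\u+\v}^2\le 3(\norm{\u}^2+\norm{\v}^2)$ plus a Jensen bound on $\norm{\E_i Y_i}^2$, whereas you use the exact variance identity $\Ei\norm{Y_i-\E_i Y_i}^2\le\Ei\norm{Y_i}^2$, which is cleaner and yields the sharper constant $3^{-3/4}$ in place of the paper's $2^{3/4}$; both are of course dominated by the stated $2$.
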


The second lemma in this section bounds the variance of $\hestim$:
\begin{lemma}
\label{lemma:hessianestimator}
If $b_h \geq 12000\log^3 d$, the Hessian estimator $\hestim$ satisfies
$$
\Ej \hesdiffnorm^3 \leq 15000L_2^3\left(\frac{\log d}{b_h}\right)^{3/2}\xdiffnorm^3,
$$
where $\Ej$ is the expectation over the batch indices $j_t \in I_h$.
\end{lemma}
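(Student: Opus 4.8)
The plan is to follow the proof of the corresponding Hessian–estimator bound in \citet{zhou:svrc}, replacing their \emph{bounded} matrix Bernstein inequality by a \emph{moment} form of it. First I would rewrite the estimator error as a centred i.i.d.\ sum. Setting $\mathbf{Z}_{j_t} := \big(\hesfj(\curir) - \hesfj(\snap)\big) - \big(\hesf(\curir) - \hesf(\snap)\big)$, the definition of $\hestim$ together with $\snaphess = \hesf(\snap)$ gives $\hesdiff = -\tfrac{1}{b_h}\sum_{j_t \in I_h}\mathbf{Z}_{j_t}$, where the $\mathbf{Z}_{j_t}$ are i.i.d.\ over the draw of $I_h$, symmetric, and mean zero because $\hesf = \tfrac1n\sum_i \hesfi$. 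So it suffices to bound $\Ej\big\|\tfrac1{b_h}\sum_{j_t}\mathbf{Z}_{j_t}\big\|^3$.

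Next I would extract the two quantities a matrix concentration bound needs. For a single draw $\mathbf{Z}$, Assumption~\ref{assumption:third} bounds the third moment of $\hesfj(\curir)-\hesfj(\snap)$ by $L_2^3\xdiffnorm^3$, so the triangle inequality in $L^3$ plus Jensen give $\big(\E\|\mathbf{Z}\|^3\big)^{1/3}\le 2L_2\xdiffnorm$; and since $\mathbf{Z}^2 \preceq \|\mathbf{Z}\|^2\I$, the power–mean inequality yields $\|\E\mathbf{Z}^2\|\le\E\|\mathbf{Z}\|^2\le 4L_2^2\xdiffnorm^2$, hence the matrix variance is $v := \big\|\sum_{j_t\in I_h}\E\mathbf{Z}_{j_t}^2\big\|\le 4b_hL_2^2\xdiffnorm^2$. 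Now I would invoke a moment form of matrix Bernstein (e.g.\ the matrix Rosenthal–Pinelis inequality / Tropp's monograph), which — unlike the version in \citet{zhou:svrc} — does not need an almost-sure bound on $\mathbf{Z}_{j_t}$: the role of that bound is played by $R := \big(\E\max_{j_t\in I_h}\|\mathbf{Z}_{j_t}\|^3\big)^{1/3}$, and this is exactly where the restriction to third moments bites, since the only available estimate is to bound the maximum by the sum, $R \le \big(\sum_{j_t\in I_h}\E\|\mathbf{Z}_{j_t}\|^3\big)^{1/3}\le 2b_h^{1/3}L_2\xdiffnorm$. The resulting estimate has the schematic shape $\big(\E\|\textstyle\sum_{j_t}\mathbf{Z}_{j_t}\|^3\big)^{1/3}\lesssim \sqrt{v\log d} + R\log d \lesssim L_2\xdiffnorm\big(\sqrt{b_h\log d} + b_h^{1/3}\log d\big)$; dividing by $b_h$, the two terms are $L_2\xdiffnorm\sqrt{\log d/b_h}$ and $L_2\xdiffnorm\, b_h^{-2/3}\log d$, and the comparison $b_h^{-2/3}\log d \le b_h^{-1/2}(\log d)^{1/2}$ is equivalent to $b_h \gtrsim \log^3 d$ — which is precisely the hypothesis $b_h\ge 12000\log^3 d$. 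Hence $\big(\Ej\hesdiffnorm^3\big)^{1/3}\lesssim L_2\xdiffnorm\sqrt{\log d/b_h}$, and cubing while carrying the numerical constants of the Bernstein bound and of the two-term comparison through yields the stated factor $15000$.

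The main obstacle is this last step: pinning down the right unbounded matrix moment inequality and getting the bookkeeping right. We are forced to stay at moment exponent $3$ (Assumption~\ref{assumption:third} controls nothing higher), yet we need the dimension to enter only as $\log d$ rather than through a Schatten-norm conversion factor such as $d^{1/3}$; reconciling these, and then checking that $b_h \ge 12000\log^3 d$ leaves enough slack to absorb the $R\log d$ term with the constant $15000$, is the delicate part. This is also the source of the extra $\log^{3}d$ relative to \citet{zhou:svrc}: their almost-sure bound $\|\hesfj(\x)-\hesfj(\y)\|\le L_2\|\x-\y\|$ controls $\max_{j_t}\|\mathbf{Z}_{j_t}\|$ directly, whereas under the third-moment assumption we can only route it through the sum, incurring the factor $b_h^{1/3}$ that must then be absorbed by enlarging the batch size. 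The companion bound of Lemma~\ref{lemma:gradientestimator} can be argued in the same spirit but is easier, being a vector-valued sum for which no $\log d$ appears; the Hessian estimator is the case where the adaptation of \citet{zhou:svrc} is not completely routine.
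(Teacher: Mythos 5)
Your proposal follows essentially the same route as the paper: the same centred i.i.d.\ decomposition, the same moment form of the matrix concentration inequality (which is in fact already Lemma~32 of \citet{zhou:svrc}, so the novelty is not in swapping the inequality but in how the $\E\max$ term is handled), the same replacement of the maximum by the sum forced by having only third moments, and the same dominance argument under $b_h \gtrsim \log^3 d$. The only divergences are cosmetic constant-tracking choices; the key ideas and their ordering match the paper's proof.
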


For completeness, we provide the rest of the lemmas from \citet{zhou:svrc} that are needed in the analysis. We change the wording a bit, to make their applicability explicit, but all the proofs in the original paper can be applied \emph{unchanged}, as is easily checked.

Lemma~\ref{lemma:svrc27} can be derived using the Cauchy-Schwarz and Young inequalities.
\newcommand{\h}[0]{\mathbf{h}}
\begin{lemma}[Lemma 27 in \citet{zhou:svrc}]
\label{lemma:svrc27} For the iterates in Algorithm~\ref{algorithm:svrc} under the assumptions of Theorem~\theothirdup{} and for any $\h$, we have
\begin{align*}
    \inner{\grdfc - \estim}{\h} &\leq \frac{M}{27}\norm{\h}^3 + \frac{2\grddiffnorm^{3/2}}{M^{1/2}}, \\
    \inner{\hesdiff}{\h} &\leq \frac{2M}{27}\norm{\h}^3 + \frac{27}{M^2}
\hesdiffnorm^3.
\end{align*}
\end{lemma}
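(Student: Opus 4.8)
The plan is to derive both inequalities from a single two-step recipe: bound the (bi)linear form by a product of norms via Cauchy--Schwarz, respectively the definition of the operator norm, and then split that product into the stated shape using Young's inequality with conjugate exponents $3$ and $3/2$, tuning a scaling parameter so that the coefficient of $\norm{\h}^{3}$ comes out to the precise constant the downstream descent argument (through Lemma~\ref{lemma:svrc24}) requires.

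For the first inequality I would start from $\inner{\grddiff}{\h}\le\abs{\inner{\grddiff}{\h}}\le\grddiffnorm\,\norm{\h}$ by Cauchy--Schwarz; if the left-hand side is negative the claim is immediate, since the right-hand side is non-negative. Next, writing Young's inequality in the scaled form $ab\le\tfrac{1}{3}(\gamma a)^{3}+\tfrac{2}{3}(b/\gamma)^{3/2}$ and applying it with $a=\norm{\h}$, $b=\grddiffnorm$ and $\gamma=(M/9)^{1/3}$, one gets $\tfrac{1}{3}\gamma^{3}=M/27$ for the cubic coefficient and $\tfrac{2}{3}\gamma^{-3/2}=2/M^{1/2}$ for the other, producing exactly $\inner{\grddiff}{\h}\le\tfrac{M}{27}\norm{\h}^{3}+\tfrac{2}{M^{1/2}}\grddiffnorm^{3/2}$.

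For the second inequality, reading $\inner{\hesdiff}{\h}$ as the quadratic form $\inner{\hesdiff\h}{\h}$, the operator-norm bound yields $\inner{\hesdiff\h}{\h}\le\hesdiffnorm\,\norm{\h}^{2}$. Now I would apply Young's inequality with the exponents in the opposite order, $ab\le\tfrac{2}{3}(\gamma a)^{3/2}+\tfrac{1}{3}(b/\gamma)^{3}$, taking $a=\norm{\h}^{2}$, $b=\hesdiffnorm$ and $\gamma=(M/9)^{2/3}$; then $\tfrac{2}{3}\gamma^{3/2}=2M/27$ is the coefficient of $\norm{\h}^{3}$ and $\tfrac{1}{3}\gamma^{-3}=27/M^{2}$ is the coefficient of $\hesdiffnorm^{3}$, matching the claim.

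I do not expect a genuine obstacle here: the mathematical content is just Cauchy--Schwarz composed with Young's inequality. The only place requiring attention is the choice of the scaling $\gamma$ together with the correct pairing of the exponent $3$ --- with $\norm{\h}$ in the gradient case but with $\hesdiffnorm$ in the Hessian case --- so that the cubic-in-$\h$ coefficients land exactly on $M/27$ and $2M/27$, which is precisely what makes these bounds usable when combined with the optimality conditions of Lemma~\ref{lemma:svrc24} to extract the per-step decrease of the cubic model $m_t^s$.
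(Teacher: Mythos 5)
Your proof is correct and matches exactly the approach the paper indicates: the text preceding the lemma says it ``can be derived using the Cauchy--Schwarz and Young inequalities,'' and otherwise defers to the original proof in Zhou et al., whose argument is precisely the Cauchy--Schwarz/operator-norm bound followed by Young's inequality with conjugate exponents $3$ and $3/2$ and a tuned scaling, as you carried out. Your constant computations all check out, and you correctly read the second left-hand side as the quadratic form $\inner{(\hesdiff)\h}{\h}$ despite the paper's compressed notation.
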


\begin{lemma}[Lemma 28 in \citet{zhou:svrc}] For the iterates in Algorithm~\ref{algorithm:svrc} under the assumptions of Theorem~\theothirdup{} and 
\label{lemma:svrc28} for any $\h$, we have
\begin{align*}
    \mu(\curir + \h) &\leq 9C_M^{3/2}\Big[M^{3/2}\norm{\mathbf{h}}^3 + \grddiffnorm^{3/2}+ M^{-3/2}\hesdiffnorm^3 \\ & \quad + \norm{\nabla m_t^s (\h)}^{3/2} + M^{3/2} \big\lvert {\norm{\h}-\stepnorm}\big \rvert ^3\Big].
\end{align*}
\end{lemma}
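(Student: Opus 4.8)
\textbf{Proof plan for Lemma~\ref{lemma:svrc28}.}
The plan is to bound the two quantities inside $\mu(\curir+\h)=\max\{\norm{\grdf(\curir+\h)}^{3/2},\,-\lammin^3(\hesf(\curir+\h))/L_2^{3/2}\}$ separately, in each case by comparing $F$ near $\curir$ with the cubic model $m_t^s(\h)=\inner{\estim}{\h}+\tfrac12\inner{\hestim\h}{\h}+\tfrac{M}{6}\norm{\h}^3$. I will use that $\nabla m_t^s(\h)=\estim+\hestim\h+\tfrac{M}{2}\norm{\h}\h$ and $\nabla^2 m_t^s(\h)\succeq \hestim+\tfrac{M}{2}\norm{\h}\I$, together with two facts already available: (i) under Assumption~\ref{assumption:third} the function $F$ has $L_2$-Lipschitz Hessian (Jensen applied to Lemma~\ref{lemma:modnesterov}), hence $\norm{\grdf(\curir+\h)-\grdfc-\hesfc\h}\le\tfrac{L_2}{2}\norm{\h}^2$ and $\norm{\hesf(\curir+\h)-\hesfc}\le L_2\norm{\h}$; and (ii) Lemma~\ref{lemma:svrc24}, whose second line gives $\hestim+\tfrac{M}{2}\stepnorm\I\succeq 0$. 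Throughout I use $L_2=M/C_M$ with $C_M=150$, so $L_2/M+\tfrac12<1$.

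For the gradient term I would write the identity $\grdf(\curir+\h)=\nabla m_t^s(\h)+\grddiff+\hesdiff\,\h+\big(\grdf(\curir+\h)-\grdfc-\hesfc\h\big)-\tfrac{M}{2}\norm{\h}\h$ and take norms, getting $\norm{\grdf(\curir+\h)}\le \norm{\nabla m_t^s(\h)}+\grddiffnorm+\hesdiffnorm\norm{\h}+M\norm{\h}^2$ after using $\tfrac{L_2}{2}+\tfrac{M}{2}\le M$. Raising to the power $3/2$ and applying the power-mean bound $(\sum_{i=1}^4 a_i)^{3/2}\le 2\sum a_i^{3/2}$ splits the right side into four pieces; the only nonstandard one, $(\hesdiffnorm\norm{\h})^{3/2}$, is handled by writing it as $(M^{-1/2}\hesdiffnorm)^{3/2}(M^{1/2}\norm{\h})^{3/2}$ and using $x^{3/2}y^{3/2}\le\tfrac12 x^3+\tfrac12 y^3$, which produces precisely $M^{-3/2}\hesdiffnorm^3$ and $M^{3/2}\norm{\h}^3$. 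This yields $\norm{\grdf(\curir+\h)}^{3/2}\le 3\big[M^{3/2}\norm{\h}^3+\grddiffnorm^{3/2}+M^{-3/2}\hesdiffnorm^3+\norm{\nabla m_t^s(\h)}^{3/2}\big]$, which is inside $9C_M^{3/2}[\cdots]$ since $C_M\ge1$; the $\big|\norm{\h}-\stepnorm\big|$ term is not even needed here.

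For the curvature term, assume $\lammin(\hesf(\curir+\h))<0$ (otherwise the term is nonpositive and nothing is to prove). I decompose $\hesf(\curir+\h)=\big(\hestim+\tfrac{M}{2}\stepnorm\I\big)+\hesdiff+\big(\hesf(\curir+\h)-\hesfc\big)-\tfrac{M}{2}\stepnorm\I$; the first summand is PSD by Lemma~\ref{lemma:svrc24}, so $\lammin(A+B)\ge\lammin(A)-\norm{B}$ gives $-\lammin(\hesf(\curir+\h))\le \hesdiffnorm+L_2\norm{\h}+\tfrac{M}{2}\stepnorm$, and $\stepnorm\le\norm{\h}+\big|\norm{\h}-\stepnorm\big|$ turns this into $\hesdiffnorm+\big(L_2+\tfrac{M}{2}\big)\norm{\h}+\tfrac{M}{2}\big|\norm{\h}-\stepnorm\big|$. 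Cubing via $(a+b+c)^3\le 9(a^3+b^3+c^3)$ and then dividing by $L_2^{3/2}$, using the homogeneity identity $M^3/L_2^{3/2}=C_M^{3/2}M^{3/2}$ on each cubic term, gives $-\lammin^3(\hesf(\curir+\h))/L_2^{3/2}\le 9C_M^{3/2}\big[(L_2/M+\tfrac12)^3\,M^{3/2}\norm{\h}^3+M^{-3/2}\hesdiffnorm^3+\tfrac18 M^{3/2}\big|\norm{\h}-\stepnorm\big|^3\big]$, and since $(L_2/M+\tfrac12)^3<1$ and $\tfrac18<1$ this is bounded by $9C_M^{3/2}[\cdots]$. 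Taking the maximum of the two bounds and adding back the nonnegative terms $\grddiffnorm^{3/2}$ and $\norm{\nabla m_t^s(\h)}^{3/2}$ gives the stated inequality; the case $\h=\mathbf 0$ is a subcase (read $\nabla m_t^s(\mathbf 0)=\estim$, $\nabla^2 m_t^s(\mathbf 0)=\hestim$, $\norm{\h}=0$).

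The routine parts are the triangle-inequality decompositions and the power-mean/AM--GM splittings (already rehearsed in Lemma~\ref{lemma:svrc27}); the one place requiring care — and the ``main obstacle'' such as it is — is the constant bookkeeping needed to land on exactly $9C_M^{3/2}$ rather than a larger constant: one must keep the numerical prefactors coming out of the cubic expansion of $-\lammin$ below $1$, which is exactly what $C_M=150$ (making $L_2/M+\tfrac12<1$) buys, and one must note that $C_M\ge1$ absorbs the factor $3$ from the gradient branch.
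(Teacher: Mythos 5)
Your proof is correct. The paper itself supplies no proof of this lemma --- it is quoted verbatim from \citet{zhou:svrc} (their Lemma 28) with the remark that the original proofs apply unchanged under Assumption~\ref{assumption:third} --- and your argument is essentially that standard one: bound the two branches of $\mu$ separately, writing $\nabla F(\x_t^s+\mathbf{h})$ as the model gradient plus the estimator errors, the Taylor remainder and the cubic-penalty correction, and bounding $-\lambda_{\mathrm{min}}(\nabla^2 F(\x_t^s+\mathbf{h}))$ via the positive semidefiniteness of $\U_t^s + \tfrac{M}{2}\norm{\mathbf{h}_t^s}\mathbf{I}$ from Lemma~\ref{lemma:svrc24}, with the $L_2$-Lipschitzness of $\nabla^2 F$ (Jensen on Assumption~\ref{assumption:third}) supplying the remainder estimates. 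Your constant bookkeeping ($3$ in the gradient branch, $9C_M^{3/2}\max\{1,(1/C_M+1/2)^3,1/8\}$ in the curvature branch) checks out and lands within the stated $9C_M^{3/2}$.
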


\begin{lemma}[Lemma 29 in \citet{zhou:svrc}]
\label{lemma:svrc29} 
For any $\x,\y,\h$ and $C \geq 3/2$ we have
\begin{align*}
    \norm{\x + \h - \y}^3 \leq 2C^2 \norm{\h}^3 + (1+3/C)\norm{\x-\y}^3.
\end{align*}
\end{lemma}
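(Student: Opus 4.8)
The plan is to reduce the vector inequality to a one–variable inequality via the triangle inequality, and then absorb the two cross terms of the expanded cube using weighted arithmetic--geometric mean (Young) inequalities. Concretely, set $a = \norm{\h}$ and $b = \norm{\x-\y}$. Since $\norm{\x+\h-\y} = \norm{\h + (\x-\y)} \le a + b$ and both sides of the asserted bound are monotone increasing in $\norm{\x+\h-\y}$, it suffices to prove the scalar statement $(a+b)^3 \le 2C^2 a^3 + (1+3/C)\,b^3$ for all $a,b \ge 0$ and $C \ge 3/2$.

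Next I would expand $(a+b)^3 = a^3 + 3a^2 b + 3ab^2 + b^3$ and estimate the two mixed monomials separately. Writing $a^2 b = (\lambda a^3)^{2/3}(\lambda^{-2} b^3)^{1/3}$ and applying weighted AM--GM with weights $2/3,\,1/3$ gives $3a^2 b \le 2\lambda\, a^3 + \lambda^{-2} b^3$; similarly writing $ab^2 = (\mu a^3)^{1/3}(\mu^{-1/2} b^3)^{2/3}$ and using weights $1/3,\,2/3$ gives $3ab^2 \le \mu\, a^3 + 2\mu^{-1/2} b^3$. The natural calibration is to make the $b^3$--budget come out to exactly $3/C$: choosing $\lambda = \sqrt{C}$ and $\mu = C^2$ yields a $b^3$--coefficient of $1 + \tfrac1C + \tfrac2C = 1 + \tfrac3C$ and an $a^3$--coefficient of $1 + 2\sqrt{C} + C^2$. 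It then remains only to verify $1 + 2\sqrt{C} + C^2 \le 2C^2$, i.e. $C^2 \ge 2\sqrt{C} + 1$, which is exactly the role of the hypothesis $C \ge 3/2$ (the spare $C^2$ on the gradient side dominates the leftover lower-order terms). Combining the three estimates gives $(a+b)^3 \le 2C^2 a^3 + (1+3/C)\,b^3$, and unwinding $a,b$ finishes the proof.

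The argument uses nothing beyond the triangle inequality and weighted AM--GM, so there is no deep obstacle; the only genuine decision is how to distribute the $3/C$ slack between the $3a^2b$ and $3ab^2$ terms, and the even-ish split $(\lambda^{-2},\,2\mu^{-1/2}) = (1/C,\,2/C)$ above is what pins the $\norm{\x-\y}^3$ coefficient at precisely $1+3/C$ while keeping the $\norm{\h}^3$ coefficient under $2C^2$. The main (mild) point to be careful about is the final scalar bookkeeping step, where the constant $3/2$ enters; everything else is a two-line computation once the Young parameters are fixed.
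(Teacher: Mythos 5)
Your overall strategy (triangle inequality to reduce to the scalar inequality $(a+b)^3\le 2C^2a^3+(1+3/C)b^3$, then weighted AM--GM on the two cross terms) is the natural one, and the two Young estimates $3a^2b\le 2\lambda a^3+\lambda^{-2}b^3$ and $3ab^2\le \mu a^3+2\mu^{-1/2}b^3$ are correct. The gap is in the final bookkeeping step, which is exactly the step you flagged as the one to be careful about: with $\lambda=\sqrt{C}$ and $\mu=C^2$ you need $1+2\sqrt{C}+C^2\le 2C^2$, i.e. $C^2\ge 1+2\sqrt{C}$, and this is \emph{false} at $C=3/2$, where the left side is $9/4=2.25$ while the right side is $1+2\sqrt{3/2}\approx 3.45$. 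The inequality $C^2\ge 1+2\sqrt{C}$ only kicks in around $C\approx 1.95$, so your argument does not cover the full range $C\ge 3/2$ claimed in the statement.

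Moreover, this cannot be repaired by redistributing the $3/C$ slack between the two cross terms: the lemma as stated is itself false at $C=3/2$. Take $\h=\x-\y$ with $\norm{\h}=1$; then the left side is $\norm{2\h}^3=8$ while the right side is $2\cdot\tfrac{9}{4}+1+2=7.5$. (More precisely, the sharp condition for $(a+b)^3\le 2C^2a^3+(1+3/C)b^3$ is $2C^2/(\sqrt{2}C-1)^2\le 1+3/C$, which fails for $C\lesssim 1.85$.) The paper does not reprove this lemma --- it imports it from \citet{zhou:svrc} --- and, importantly, it only ever invokes it with $C=T\ge 2$ (see the derivation of the recurrence for $c_{t+1}\norm{\x_{t+1}^s-\snap}^3$), where your choice of parameters does go through since $4\ge 1+2\sqrt{2}$. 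So your proof is salvageable verbatim under the hypothesis $C\ge 2$, but as written it purports to prove a statement that is false on part of its stated domain, and the failure of your last inequality at $C=3/2$ is the symptom of that.
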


\begin{lemma}[Lemma 30 in \citet{zhou:svrc}]
\label{lemma:svrc30} Define $c_T = 0$ and for $t \in [0:T-1]$ define $c_t = c_{t+1}(1+3/T)+M(500T^3)^{-1}$. Then for any $t \in [1:T]$ we have:
$$
M/24 - 2c_tT^2 \geq 0.
$$
\end{lemma}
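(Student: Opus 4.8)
The plan is to solve the recurrence in closed form and then bound it by a crude constant comparison. The recurrence is linear and anchored \emph{backward} at $c_T = 0$, so writing $\alpha := 1 + 3/T$ and $\beta := M/(500\,T^3)$ it reads $c_t = \alpha\,c_{t+1} + \beta$. First I would unroll it: for $t \in [0:T]$, writing $k = T - t$, a one-line induction gives
$$
c_t \;=\; \beta \sum_{j=0}^{k-1} \alpha^{j} \;=\; \beta\,\frac{\alpha^{k}-1}{\alpha-1},
$$
where the empty sum at $k = 0$ correctly recovers $c_T = 0$. Since $\alpha - 1 = 3/T$, this simplifies to the closed form $c_t = \frac{M}{1500\,T^2}\bigl((1+3/T)^{T-t}-1\bigr)$. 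One could equivalently verify this closed form directly by backward induction on $t$; the step closes cleanly precisely because $\beta = \frac{M}{500\,T^3} = \frac{M}{1500\,T^2}\cdot\frac{3}{T}$ equals $(\alpha-1)$ times the per-step coefficient.

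Next I would bound the geometric factor uniformly over $t \in [1:T]$. Here $k = T - t \le T-1$, so $\alpha^{k} \le (1+3/T)^{T} \le e^{3}$ by the standard estimate $(1+x/m)^m \le e^{x}$. Hence $\alpha^{k}-1 \le e^{3}-1 < 21$, and therefore $c_t < \frac{21\,M}{1500\,T^2} = \frac{7M}{500\,T^2}$ for every $t \in [1:T]$, with the even sharper value $c_T = 0$ at the endpoint.

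Finally, since $7\cdot 48 = 336 < 500$, we get $c_t < \frac{7M}{500\,T^2} < \frac{M}{48\,T^2}$, so $2\,c_t\,T^2 < M/24$, i.e. $M/24 - 2\,c_t\,T^2 > 0 \ge 0$, which is exactly the claim (at $t = T$ it is immediate from $c_T = 0$). This completes the argument.

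There is no genuine obstacle here; the proof is mechanical. The only two things to keep straight are that the recurrence runs backward from $c_T = 0$ — so the accumulated geometric series has $T-t$ terms, not $t$ — and the numerics: the constant $500$ in the denominator of the additive term is exactly what provides the slack, since the worst case $c_1 = \frac{M}{1500\,T^2}\bigl((1+3/T)^{T-1}-1\bigr)$ must stay below $\frac{M}{48\,T^2}$, and $\frac{e^{3}}{1500} < \frac{1}{48}$ does the job with room to spare. If a self-contained version is preferred that avoids invoking $(1+3/T)^T \le e^3$, one can instead observe $(1+3/T)^{T-1} \le 4^{\,(T-1)/\log_2(1+3/T)}$ type bounds, but the $e^{3}$ estimate is cleanest and entirely sufficient.
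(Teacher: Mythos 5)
Your proof is correct. Note that the paper itself does not give a proof of this lemma: it is imported verbatim from Zhou et al.\ with the remark that the original proofs apply unchanged, so there is no in-paper argument to compare against. Your derivation is sound and self-contained: the closed form $c_t = \frac{M}{1500\,T^2}\bigl((1+3/T)^{T-t}-1\bigr)$ follows correctly from the backward recurrence, the estimate $(1+3/T)^{T-t}\le (1+3/T)^T\le e^3$ is valid for $t\ge 1$ and any $T\ge 1$, and the arithmetic $e^3-1<21$, $\tfrac{7}{500}<\tfrac{1}{48}$ closes the gap with margin. This is almost certainly the same elementary unrolling argument one would find in the cited source, so there is nothing to flag beyond the observation that you are supplying a proof the paper delegates elsewhere.
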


\begin{proof}{of Theorem \ref{svrc:theorem}}
This proof is very close to identical to the one of Theorem 6 in \citet{zhou:svrc}, but we give it again for completeness, with the changes coming from the slightly modified lemmas. We can bound the function value at the next iterate $F(\x_{t+1})$ as follows:
\begin{align}
F(\x_{t+1}^s) &\leq F(\curir) + \inner{\grdfc}{\step} + \frac{1}{2}\inner{\hesfc\step}{\step} + \frac{L_2}{6}\norm{\step}^3  \label{eq:theoremineq1}\\
&= F(\x_t^s) + \inner{\estim}{\step} + \frac{1}{2}\inner{\hestim\step}{\step} + \frac{M}{6}\stepnorm^3 + \inner{\grddiff}{\step} \nonumber \\ 
& \quad \quad + \frac{1}{2}\inner{\left( \hesdiff \right)\step}{\step} + \frac{M - L_2}{6}\stepnorm^3 \nonumber \\
&\leq F(\curir) - \frac{M}{2}\stepnorm^3 + \left( \frac{M}{27} \stepnorm^3 + \frac{2\grddiffnorm^{3/2}}{M^{1/2}}\right) \nonumber \\
& \quad \quad + \frac{1}{2}\left( \frac{2M}{27}\stepnorm^3 + \frac{27}{M^2}\hesdiffnorm^3\right) - \frac{M - L_2}{6}\stepnorm^3 \label{eq:theoremineq2} \\
&\leq F(\curir) - \frac{M}{12}\stepnorm^3 + \frac{2}{M^{1/2}}\grddiffnorm^{3/2}+\frac{27}{M^2}\hesdiffnorm^3 \label{eq:theorem:nextbound}.
\end{align}
\eqref{eq:theoremineq1} holds due to Lemma~\ref{lemma:modnesterov} and \eqref{eq:theoremineq2} is valid because of Lemmas~\ref{lemma:svrc24} and \ref{lemma:svrc27}.

Define
$$ R_t^s = \E \left[F(\curir) + c_t\xdiffnorm^3\right],$$
where $c_T = 0$ and $c_t = c_{t+1}(1+3/T)+ M(500T^3)^{-1}$ for $t \in [0:T-1]$. We use Lemma~\ref{lemma:svrc29} with $T \geq 2 \geq 3/2$ to get a recurrence -- involving the step -- for the cubed distance from an iterate to the snapshot point:
\begin{equation}
c_{t+1}\norm{\x_{t+1}^s- \snap}^3 \leq 2c_{t+1}T^2\norm{\step}^3+c_{t+1}(1+3/T)\norm{\curir - \snap}^3 \label{eq:theoremproof:sequence}.
\end{equation}

We can make use of Lemma \ref{lemma:svrc28} with $\mathbf{h} = \step$ followed by Lemma \ref{lemma:svrc24}
\begin{align}
(240C_M^2L_2^{1/2})^{-1}\mu(\x_{t+1}^s) 
&\leq \frac{M}{24}\stepnorm^3+\frac{\grddiffnorm^{3/2}}{24M^{1/2}}+\frac{\hesdiffnorm^3}{24M^2} \nonumber \\ 
&\quad \quad + \frac{\norm{\nabla m_t^s(\step)}^{3/2}}{24M^{1/2}} + \frac{M}{24}\big \lvert \stepnorm - \stepnorm \big \rvert^3 \nonumber \\ 
&= \frac{M}{24}\stepnorm^3+\frac{\grddiffnorm^{3/2}}{24M^{1/2}}+\frac{\hesdiffnorm^3}{24M^2}, \label{eq:theoremproof:mubound}.
\end{align}
 In the first step we used $C_M = 150$ and $M = C_M L_2$ and in the second we used the optimality of $\step$ as an argument of $m_t^s$. Our aim is to get a telescoping sum for the $R_t$'s. For that, we start by combining \eqref{eq:theorem:nextbound}, \eqref{eq:theoremproof:sequence} and \eqref{eq:theoremproof:mubound} (this time the expectation is over all the randomness involved in the algorithm):
\begin{align}
    R_{t+1}^s + (240C_M^2L_2^{1/2})^{-1}\E[\mu(\x_{t+1})]  
    &= \E\left[F(\x_{t+1}^s) + c_{t+1}\norm{\x_{t+1}^s- \snap}^3 + (240C_M^2L_2^{1/2})^{-1}\mu(\x_{t+1}^s)\right]\nonumber \\
    &\leq \E\left[F(\curir) + c_{t+1}(1+3/T)\norm{\curir - \snap}^3 - (M/24 - 2c_{t+1}T^2)\norm{\step}^3\right]\nonumber \\ 
    &\quad \quad+ \E\left[3M^{-1/2} \grddiffnorm^{3/2} + 28M^{-2}\hesdiffnorm^3\right] \nonumber\\
    &\leq \E\left[F(\curir) + c_{t+1}(1+3/T)\norm{\curir - \snap}^3\right] \nonumber\\ 
    &\quad \quad+ \E\left[3M^{-1/2} \grddiffnorm^{3/2} + 28M^{-2}\hesdiffnorm^3\right] \label{eq:theorem:startrecurrence},
\end{align}
because by Lemma~\ref{lemma:svrc30} we have $M/24 - 2c_{t+1}T^2 \geq 0$ for any $t \in [T]$.
In the second term of \eqref{eq:theorem:startrecurrence}, we recover the gradient and Hessian estimator variances that Lemmas \ref{lemma:gradientestimator} and \ref{lemma:hessianestimator} control. Indeed, taking iterated expectations yields
$$
    3M^{-1/2} \grddiffnorm^{3/2} \leq \frac{6 L_2^{3/2}}{M^{1/2}b_g^{3/4}}\E \xdiffnorm^3 \leq \frac{M}{1000T^3}\E\xdiffnorm^3.
$$
Here we have used that $M = 150L_2$ and $b_g \geq 5T^4$. For the Hessian estimator, we get
\begin{align*}
    28M^{-2}\hesdiffnorm^3 &\leq \frac{28\cdot 15000 L_2^3}{M^2(b_h/\log d)^{3/2}}\E \xdiffnorm^3 \\&\leq \frac{28\cdot 15000M}{150^3(3000)^{3/2}T^3}\E \xdiffnorm^3 
    \\&\leq \frac{M}{1000T^3}\E\xdiffnorm^3,
\end{align*}
where we additionally use $b_h \geq 3000 T^2 \log^3 d$. Note that our larger $b_h$ actually gives us better constant factors than we derive, but we do not need this and therefore keep the same as in the original proof. From here, we exactly follow said original proof from \citet{zhou:svrc}.
We can plug those 2 bounds back into \eqref{eq:theorem:startrecurrence} and use the definition of $c_t$ to get the recurrence
\begin{align*}
    R_{t+1}^s + (240C_M^2L_2^{1/2})^{-1}\E[\mu(\x_{t+1})]  &\leq \E\left[F(\curir) + \norm{\curir - \snap}^3\left(c_{t+1}(1+3/T) + \frac{M}{500T^3} \right)\right] \\
    &= \E[F(\curir) + c_t\xdiffnorm^3]
    = R_t^s.
\end{align*}
We will now do 2 steps of telescoping. First, let $s \in [S]$ be arbitrary. As $c_T = 0$ and $x_T^s = \widehat{\x}^{s+1}$ by definition, we have $R_T^s = \E[F(\x_T^s) + c_T\norm{\x_T^s - \snap}^3] = \E F(\x_T^s) = \E F(\widehat{\x}^{s+1})$. As $\x_0^s = \snap$, we have $R_0^s = \E[F(\x_0^s) + c_0\norm{\x_0^s-\snap}^3] = \E F(\snap)$. Thus, rearranging and telescoping the above from $t=0$ to $T-1$ yields
$$
\E F(\snap) - \E F(\widehat{\x}^{s+1}) = R_0^s-R_T^s \geq \sum_{t=1}^T(240C_M^2 L_2^{1/2})^{-1}\E[\mu (\curir)].
$$
Further, we can telescope this from $s=1$ to $S$ and obtain
$$
    \Delta \geq F(\widehat{\x}^1) - F(\widehat{\x}^S) = \sum_{s=1}^S\left[\E F(\snap) - \E F(\widehat{\x}^{s+1})\right] \geq 
    (240C_M^2 L_2^{1/2})^{-1}\sum_{s=1}^S\sum_{t=1}^T\E[\mu(\curir)].
$$
The first inequality holds because of the definition of $\x_0 = \widehat{\x}^1$ and because the choice of $\step$ guarantees the iterates do not yield increases in function value over time. Therefore, picking a random iterate $\curir$, we will have 
$$
    \E[\mu(\curir)] \leq \frac{240 C_M^2L_2^{1/2}\Delta}{ST},
$$
as desired.
\end{proof}

\subsection{Proof of technical lemmas for the upper bound}
\begin{proof}{of Lemma \ref{lemma:modnesterov}}
We have 
\begin{align*}
    \E_i\norm{\nabla f_i(\y) - \nabla f_i(\x) - \nabla^2 f_i(\x)(\y - \x)}^2 &= \E_i \norm{\int_0^1[\nabla^2 f_i(\x + \tau(\y - \x)) - \nabla^2 f_i(\y)](\y - \x) d\tau}^2 \\
    &\leq \E_i \int_0^1\norm{\nabla^2 f_i(\x + \tau(\y - \x)) - \nabla^2 f_i(\y)}^2\norm{\x - \y}^2 d\tau \\
    &= \int_0^1 \E_i\norm{\nabla^2 f_i(\x + \tau(\y - \x)) - \nabla^2 f_i(\y)}^2\norm{\x - \y}^2 d\tau \\
    &\leq \int_0^1 L_2^2\norm{\x + \tau(\y - \x) - \y}^2 \norm{\x - \y}^2 d\tau \\
    &= \frac{L_2^2}{3}\norm{\x - \y}^4,
\end{align*}
where the first inequality is because of $\norm{\int_0^1 \v d\tau}^2 \leq \left(\int_0^1 \norm{ \v} d\tau\right)^2 \leq \int_0^1 \norm{ \v}^2 d\tau$ and the second inequality follows because of Assumption~\assthird{} and $\E[\abs{X}^s]^{1/s}\leq \E[\abs{X}^t]^{1/t}$ for $s \leq t$.
\end{proof}

To prove Lemma \ref{lemma:gradientestimator} we will need the following technical result:

\begin{lemma}[Lemma 31 in \citet{zhou:svrc}]
\label{lemma:svrc31} Suppose $\mathbf{a}_1,\ldots,\mathbf{a}_N$ are i.i.d. and $\E\mathbf{a}_i = 0$ for all $i$. Then
$$
\E \norm{\frac{1}{N}\sum_{i=1}^N\mathbf{a}_i}^{3/2}\leq \frac{1}{N^{3/4}}(\E\norm{\mathbf{a}_i}^2)^{3/4}.
$$
\end{lemma}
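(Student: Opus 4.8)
The plan is to reduce the $3/2$-th moment of the average to its second moment by Jensen's inequality, and then to evaluate that second moment exactly, exploiting independence and the zero-mean hypothesis to kill all cross terms. Concretely, the first step is to note that $t \mapsto t^{3/4}$ is concave on $[0,\infty)$, so applying Jensen's inequality to the nonnegative random variable $\norm{\frac{1}{N}\sum_{i=1}^N \mathbf{a}_i}^2$ gives
$$
\E\norm{\tfrac{1}{N}\sum_{i=1}^N \mathbf{a}_i}^{3/2}
= \E\Big[\big(\norm{\tfrac{1}{N}\sum_{i=1}^N \mathbf{a}_i}^2\big)^{3/4}\Big]
\leq \Big(\E\norm{\tfrac{1}{N}\sum_{i=1}^N \mathbf{a}_i}^2\Big)^{3/4}.
$$

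The second step is to compute the right-hand side. Expanding the squared Euclidean norm as a double sum of inner products and using linearity of expectation,
$$
\E\norm{\tfrac{1}{N}\sum_{i=1}^N \mathbf{a}_i}^2
= \frac{1}{N^2}\sum_{i=1}^N\sum_{j=1}^N \E\langle \mathbf{a}_i, \mathbf{a}_j\rangle .
$$
For $i \neq j$, independence together with $\E\mathbf{a}_i = 0$ yields $\E\langle \mathbf{a}_i, \mathbf{a}_j\rangle = \langle \E\mathbf{a}_i, \E\mathbf{a}_j\rangle = 0$; for $i = j$ the summand equals $\E\norm{\mathbf{a}_i}^2 = \E\norm{\mathbf{a}_1}^2$ by identical distribution. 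Hence the double sum collapses to $N\,\E\norm{\mathbf{a}_1}^2$, giving $\E\norm{\frac{1}{N}\sum_i \mathbf{a}_i}^2 = \frac{1}{N}\E\norm{\mathbf{a}_1}^2$. Plugging this into the Jensen bound of the previous step produces
$$
\E\norm{\tfrac{1}{N}\sum_{i=1}^N \mathbf{a}_i}^{3/2}
\leq \Big(\tfrac{1}{N}\E\norm{\mathbf{a}_1}^2\Big)^{3/4}
= \frac{1}{N^{3/4}}\big(\E\norm{\mathbf{a}_i}^2\big)^{3/4},
$$
which is the claimed inequality.

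There is essentially no genuine obstacle here: the argument is just Jensen's inequality followed by the orthogonality of independent mean-zero summands in the inner-product expansion. The only point requiring a moment of care — and it is a convention rather than a difficulty — is that $\norm{\cdot}$ must be read as the Euclidean norm so that the polarization $\norm{\sum_i \mathbf{a}_i}^2 = \sum_{i,j}\langle \mathbf{a}_i,\mathbf{a}_j\rangle$ is available and the cross terms vanish; this matches the paper's stated convention that the tensor operator norm reduces to the Euclidean norm on vectors. One should also implicitly assume $\E\norm{\mathbf{a}_i}^2 < \infty$ for the statement to be non-vacuous, in which case every step above is finite and justified.
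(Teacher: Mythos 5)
Your proof is correct: Jensen's inequality applied to the concave map $t\mapsto t^{3/4}$, followed by the exact computation of the second moment via independence and zero mean, is precisely the standard argument for this lemma (the paper itself imports it from \citet{zhou:svrc} without reproving it). Your side remarks about the Euclidean-norm convention and finiteness of $\E\norm{\mathbf{a}_i}^2$ are also apt, since the lemma is applied to vector-valued gradient estimator errors.
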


\begin{proof}{of Lemma~\ref{lemma:gradientestimator}}
Using the definition of $\estim$, we can write
\begin{align*}
    &\quad \,\,\Ei \norm{\grdf (\curir) - \estim}^{3/2} \\
    &=\Ei \norm{\frac{1}{b_g}\sum [\grdfi (\curir) - \grdfi(\snap)] + \snapgrad - \left[\frac{1}{b_g} \sum \hesfi(\snap) - \snaphess \right](\curir - \snap) - \grdf (\curir)}^{3/2} \\
    &=\Ei \norm{\frac{1}{b_g} \sum [\fidiffnesres - (\fdiffnesres)]}^{3/2} \\
    &\leq \frac{1}{b_g^{3/4}}\left(\Ei \norm{\fidiffnesres - (\fdiffnesres)}^{2} \right)^{3/4} \\
    &\leq \frac{3^{3/4}}{b_g^{3/4}}\big(\Ei \norm{\fidiffnesres}^2 \\ & \quad \quad \,\,\, + \Ei\norm{(\fdiffnesres)}^{2} \big)^{3/4} \\
    &\leq \frac{3^{3/4}}{b_g^{3/4}}\left(\frac{L_2^2}{3}\norm{\curir - \snap}^4 + \frac{L_2^2}{3}\norm{\curir - \snap}^4 \right)^{3/4} \\
    &= \frac{2L_2^{3/2}}{b_g^{3/4}} \norm{\curir - \snap}^3.
\end{align*}
The first inequality is because of Lemma \ref{lemma:svrc31}. Indeed, as the different indices are independent, and the expectation is taken over the batch indices, we can apply Lemma~\ref{lemma:svrc31}. The second holds due to the basic inequality $\norm{\u+\v}^2 \leq 3(\norm{\u}^2 + \norm{\v}^2)$. The third inequality is because of Lemma~\ref{lemma:modnesterov}. 
\end{proof}

In the proof of Lemma~\ref{lemma:hessianestimator}, we will need the following matrix-moment inequality.
\begin{lemma}[Lemma 32 in \citet{zhou:svrc}]
\label{lemma:svrc32} Suppose that $q \geq 2, p \geq 2$, and fix $r \geq \max \{q,2\log p\}$. Consider i.i.d. random self-adjoint matrices $\mathbf{Y}_1,\ldots,\mathbf{Y}_N$ with dimension $p \times p$, $\E \mathbf{Y}_i = \mathbf{0}$. It holds that
$$
    \left[\E\norm{\sum_{i=1}^N \mathbf{Y}_i}^q\right]^{1/q} \leq 2 \sqrt{er}\norm{\left(\sum_{i=1}^N \E\mathbf{Y}_i^2\right)^{1/2}} + 4er(\E \max_i\norm{\mathbf{Y}_i}^q)^{1/q}.
$$
\end{lemma}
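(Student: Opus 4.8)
This is a matrix Rosenthal--Pinelis moment inequality, and the plan is to prove it by combining, in the usual order, a symmetrization step, a conditional application of the noncommutative (matrix) Khintchine inequality, and a bootstrap that closes a recursion for the moment $(\E\norm{\sum_i \mathbf{Y}_i}^q)^{1/q}$. First I would symmetrize: letting $\mathbf{Y}_i'$ be an independent copy of $\mathbf{Y}_i$ and $\epsilon_1,\dots,\epsilon_N$ i.i.d.\ Rademacher signs, Jensen's inequality gives $\E\norm{\sum_i \mathbf{Y}_i}^q \le \E\norm{\sum_i(\mathbf{Y}_i-\mathbf{Y}_i')}^q$, and since $\mathbf{Y}_i-\mathbf{Y}_i'$ is symmetric this equals $\E\norm{\sum_i \epsilon_i(\mathbf{Y}_i-\mathbf{Y}_i')}^q$; splitting by the triangle inequality and convexity then yields $(\E\norm{\sum_i \mathbf{Y}_i}^q)^{1/q} \le 2\,(\E\norm{\sum_i \epsilon_i \mathbf{Y}_i}^q)^{1/q}$, which is the source of the leading constant $2$.

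Next, conditioning on $(\mathbf{Y}_1,\dots,\mathbf{Y}_N)$, I would apply matrix Khintchine to the Rademacher sum. Passing from the operator norm up to the Schatten-$r$ norm (free on the left, since $\norm{\cdot}\le\norm{\cdot}_{S_r}$), invoking the noncommutative Khintchine inequality with exponent $r$, and passing back down via $\norm{\mathbf{M}}_{S_r}\le p^{1/r}\norm{\mathbf{M}}$, one obtains $(\E_\epsilon \norm{\sum_i \epsilon_i \mathbf{A}_i}^q)^{1/q} \le \sqrt{er}\,\norm{(\sum_i \mathbf{A}_i^2)^{1/2}}$ for deterministic self-adjoint $\mathbf{A}_i$ whenever $r\ge\max\{q,2\log p\}$; this is exactly where the two hypotheses on $r$ are used (the bound $r\ge 2\log p$ makes $p^{1/r}\le e^{1/2}$, absorbing the ambient dimension), and it produces the $\sqrt e$. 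Taking the outer expectation and using $\norm{(\sum_i \mathbf{Y}_i^2)^{1/2}}^q=\norm{\sum_i \mathbf{Y}_i^2}^{q/2}$ leaves $(\E\norm{\sum_i \mathbf{Y}_i}^q)^{1/q} \le 2\sqrt{er}\,(\E\norm{\sum_i \mathbf{Y}_i^2}^{q/2})^{1/q}$.

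It then remains to control $(\E\norm{\sum_i \mathbf{Y}_i^2}^{q/2})^{2/q}$. Writing $\mathbf{Y}_i^2=\E\mathbf{Y}_i^2+\mathbf{Z}_i$ with $\mathbf{Z}_i:=\mathbf{Y}_i^2-\E\mathbf{Y}_i^2$ centered, the triangle inequality together with Minkowski in $L^{q/2}$ (valid since $q\ge 2$) and $\sqrt{a+b}\le\sqrt a+\sqrt b$ split this quantity as $\norm{(\sum_i\E\mathbf{Y}_i^2)^{1/2}}+(\E\norm{\sum_i \mathbf{Z}_i}^{q/2})^{1/q}$; multiplied by $2\sqrt{er}$, the first piece is already the target's first summand. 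For the fluctuation piece I would apply the inequality recursively to the centered $\mathbf{Z}_i$ with exponent $q/2$ and the same $r$ (still admissible; the small regime $q<4$ is handled by a direct second-moment bound), estimate the recursive variance term via $\mathbf{Y}_i^4\preceq\norm{\mathbf{Y}_i}^2\mathbf{Y}_i^2$ together with a truncation $\norm{\mathbf{Y}_i}^2\le t+\norm{\mathbf{Y}_i}^2\mathbf{1}[\norm{\mathbf{Y}_i}^2>t]$ at a level $t$ of order $\sqrt r\,(\E\max_i\norm{\mathbf{Y}_i}^q)^{1/q}$ chosen to balance $\norm{\sum_i\E\mathbf{Y}_i^2}$, and estimate the recursive maximum term using $\norm{\mathbf{Z}_i}\le\norm{\mathbf{Y}_i}^2+\E\norm{\mathbf{Y}_i}^2$ and $\E\norm{\mathbf{Y}_i}^2\le(\E\max_j\norm{\mathbf{Y}_j}^q)^{2/q}$. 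Folding these back in, the fluctuation contributes exactly the term $4er\,(\E\max_i\norm{\mathbf{Y}_i}^q)^{1/q}$. Equivalently, and this is what I would cite in the paper, the statement is precisely the matrix Rosenthal--Pinelis inequality as established by Chen, Gittens and Tropp (and by Mackey, Jordan, Chen, Farrell and Tropp via exchangeable pairs), so one may simply appeal to it.

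The main obstacle is this last bootstrap: making the recursion close with exactly $2\sqrt e$ and $4e$, rather than larger constants, requires calibrating the truncation level carefully and re-expressing the recursive variance term in terms of quantities already present rather than introducing genuinely new ones; it also requires the dimensional factor in the Khintchine step to be tracked through the Schatten-norm conversion so that it is fully absorbed into $e$ under $r\ge 2\log p$. Everything before that step is routine.
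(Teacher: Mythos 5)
The paper does not prove this lemma at all---it is stated as ``Lemma 32 in \citet{zhou:svrc}'' and simply imported, so there is no in-paper argument to compare your attempt against. Your sketch (symmetrization to generate the factor $2$, conditional matrix Khintchine in the Schatten-$r$ norm with the ambient-dimension factor $p^{1/r} \le \sqrt{e}$ absorbed under $r \ge 2\log p$, and a bootstrap on $\sum_i \mathbf{Y}_i^2$ with a calibrated truncation to produce the $4er$ maximum term) is indeed the standard route to this matrix Rosenthal--Pinelis inequality, and the sources you name---Chen, Gittens and Tropp, and Mackey, Jordan, Chen, Farrell and Tropp---are the right ones to cite. You are also right to flag the bootstrap as where the real work lives: turning your outline into a self-contained derivation with the stated constants $2\sqrt{e}$ and $4e$ would require carrying the truncation calibration and the recursion bookkeeping through very carefully, and nothing in the present paper does that; like you, it ultimately falls back on citation. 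In short, your plan is consistent with the literature the paper appeals to, and the only genuine gap relative to a full proof is precisely the constant-tracking in the bootstrap, which neither you nor the paper carries out.
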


\begin{proof}{of Lemma \ref{lemma:hessianestimator}}
We can rewrite
\begin{align*}
    \Ej \hesdiffnorm^3 &= \Ej \norm{\hesf(\curir) - \frac{1}{b_h}\left[\sum[\hesfj(\curir) - \hesfj(\snap) + \snaphess]\right]}^3 \\
    &= \Ej \norm{\frac{1}{b_h}\left[\sum[\hesfj(\curir) - \hesfj(\snap) + \snaphess -  \hesf(\curir)]\right]}^3.
\end{align*}
Applying Lemma~\ref{lemma:svrc32}, and using our third-moment assumption, we can bound this further. The Lemma controls the third moment of a sum with a sum of second moments and an additive term of the third moment of the maximum matrix. While Assumption~\assthird{} is not ideal for bounding maximum terms, we may replace the maximum with a sum over the whole batch, which is sufficient in this case. This only makes the batch size requirement grow polylogarithmically in the dimension of the domain. We proceed with the proof. Define $\ycut = \hesfj(\curir) - \hesfj(\snap) + \snaphess -  \hesf(\curir)$ and set $N=b_h$, $q=3$, $p=d$ and $r=2\log p$. Then 
\begin{equation}
\label{eq:lemma32result}
    \left(\Ej \norm{\sum\ycut}^3\right)^{1/3} \leq 2\sqrt{er}\norm{\left(\sum \Ej \ycut^2\right)^{1/2}} + 4er(\Ej \max_{j_t} \norm{\ycut}^3)^{1/3}.
\end{equation}
We bound both terms separately. For the first, we follow the original proof and get
\begin{align*}
    2\sqrt{er}\norm{\left(\sum \Ej \ycut^2\right)^{1/2}} &= 2\sqrt{er}\norm{\sum \Ej \ycut^2}^{1/2} \\
    &= 2\sqrt{b_h er}\norm{\Ej \ycut^2}^{1/2} \\
    &\leq 2\sqrt{b_h er}\left(\Ej \norm{\ycut^2}\right)^{1/2} \\
    &\leq 2\sqrt{b_h er}\left(\Ej \norm{\ycut}^2\right)^{1/2}.
\end{align*}
Plugging back the definition of $\ycut$, and using Assumption~\assthird{} along with $\E[\abs{X}^s]^{1/s}\leq \E[\abs{X}^t]^{1/t}$ for $s \leq t$ allows us to bound
\begin{align}
    2\sqrt{b_h er}\left(\Ej \norm{\ycut}^2\right)^{1/2} &=  2\sqrt{b_h er}\left(\Ej \norm{\hesfj(\curir) - \hesfj(\snap) + \snaphess -  \hesf(\curir)}^2\right)^{1/2} \nonumber \\
    &\leq 2\sqrt{b_h er}\left(3\, \Ej \norm{\hesfj(\curir) - \hesfj(\snap)}^2 + 3\,\Ej\norm{\snaphess -  \hesf(\curir)}^2\right)^{1/2} \nonumber \\
    &\leq 2\sqrt{b_h er}\left(6\,L_2^2\xdiffnorm^2\right)^{1/2} \nonumber    \\
    &\leq 5L_2 \sqrt{b_h er}\xdiffnorm \label{eq:lemma32resultfirst}.
\end{align}
For the second term in Equation~\eqref{eq:lemma32result} we write
\begin{align}
    4er\big(\Ej \max_{j_t} \norm{\ycut}^3\big)^{1/3}
    &\leq 4er\big(\Ej \sum \norm{\ycut}^3\big)^{1/3} \nonumber \\
    &\leq 4b_h ^{1/3}er\big(\Ej  \norm{\ycut}^3\big)^{1/3} \nonumber \\
    &= 4b_h ^{1/3}er\big(\Ej \norm{\hesfj(\curir) - \hesfj(\snap) + \snaphess -  \hesf(\curir)}^3\big)^{1/3} \nonumber \\
    &\leq 4(7b_h )^{1/3}er\big(\Ej\norm{\hesfj(\curir) - \hesfj(\snap)} + \norm{\snaphess -  \hesf(\curir)}^3\big)^{1/3} \nonumber \\
    &\leq 4(7b_h )^{1/3}er\big(2 L_2^3\xdiffnorm^3\big)^{1/3} \nonumber\\
    &\leq 4(7b_h )^{1/3}er\big(2 L_2^3\xdiffnorm^3\big)^{1/3} \nonumber \\
    &\leq 10L_2b_h ^{1/3}er \xdiffnorm \label{eq:lemma32resultsecond}.
\end{align}
Plugging in Equations \eqref{eq:lemma32resultfirst} and \eqref{eq:lemma32resultsecond} into \eqref{eq:lemma32result} we get
\begin{align*}
    \left(\Ej \norm{\sum\ycut}^3\right)^{1/3} \leq 5L_2 \sqrt{b_h er}\xdiffnorm + 10L_2b_h ^{1/3}er \xdiffnorm,
\end{align*}
and therefore for the quantity we are interested in:
\begin{align}
    \Ej \hesdiffnorm^3 &\leq 125L_2^3 \left(\sqrt{\frac{er}{b_h }}+ \frac{2er}{b_h ^{2/3}}\right)^3 \xdiffnorm^3 \nonumber \\
    &\leq 125L_2^3 \left(\sqrt{\frac{2e\log d}{b_h }}+ \frac{4e\log d}{b_h ^{2/3}}\right)^3 \xdiffnorm^3 \label{eq:batch_size_relevant} \\
    &\leq 15000 L_2^3 \left({\frac{\log d}{b_h }}\right)^{3/2} \xdiffnorm^3. \nonumber
\end{align}
Because in \eqref{eq:batch_size_relevant} the first term in the parentheses dominates if $b_h \geq \sqrt{8e\log d}^6$, for which $b_h \geq 12000\log^3 d$ is sufficient.
\end{proof}
\subsection{Proof of Theorem \theothirdlow{}}
\label{section:meansquaredproof}
\begin{proof}{of Theorem \theothirdlow}
Let $\sigma, \lambda > 0$ be parameters yet to be chosen. The same is true for $d$ and $K$. According to Definition~\defrandhard, we define the scaled functions
$$
	f_i(\x) = {\sqrt[3]{n}\lambda \sigma^{3}} f^*_i\left(\frac{\x}{\sigma} \right) = \sqrt[3]{n} \lambda \sigma^{3} \hat{f}_{K;\B_i} \left(\frac{\C_i^T\x}{\sigma} \right),
$$
giving us
$$
	F(\x) = \frac{1}{n}\sum_{i=1}^n f_i(\x).
$$
We will choose the scaling parameters to ensure that our instance satisfies Assumption~\assthird{}, deriving the lower bound as we go along. We first guarantee smoothness:
for any $\x,\y \in \R^d$ we have
\begin{align}
	\E_i \norm{\nabla^2 f_i(\x) - \nabla^2 f_i(\y)}^3 \nonumber
	&= \frac{1}{n} \sum_{i=1}^n \norm{\nabla^2f_i(\x) - \nabla^2f_i(\y)}^3 \nonumber \\
	\label{eq:third_moment:lipschitztensor}
	&\leq \frac{1}{n} \sum_{i=1}^n (\sqrt[3]{n}\lambda \hat{\ell}_2)^3 \norm{\C_i^T \x - \C_i^T \y}^3 \\
	&=  \lambda^3 \hat{\ell}_2^3 \sum_{i=1}^n \norm{\C_i^T (\x -  \y)}^2 \norm{\C^T(\x-\y)} \nonumber\\
	&=  \lambda^3 \hat{\ell}_2^3 \norm{\C^T(\x -  \y)}^3 \nonumber\\
	&= \lambda^3 \hat{\ell}_2^3 \norm{\x -  \y}^3 \nonumber,
\end{align}
where \eqref{eq:third_moment:lipschitztensor} follows from Lemmas \ref{lemma:additional:tensorineq} and \ref{lemma:carmon:hat:properties}. So, the choice $\lambda = \frac{L_2}{\hat{\ell}_2}$ therefore accomplishes third-moment smoothness with parameter $L_2$. 

Now fix an algorithm $\algo$ and assume $\{[i^t,\x^{(t)}]\}_{t\in \N}$ are the iterates produced by $\algo$ on $F$. Consequently, by Lemma~\lemmainformed{} $\{[i^t,\x^{(t)}/\sigma]\}_{t\in \N}$ is informed by $F^*$. 
Therefore we can apply Lemma~\lemmagradbound{} on the sequence $\{[i^t,\x^{(t)}/\sigma]\}_{t\in \N}$ to get
\begin{align*}
\norm{\nabla F(\x^{(t)})}^2 
&= \norm{ \sqrt[3]{n} \lambda \sigma^{2} \nabla F^*(\x^{(t)}/\sigma)}^2  \\
&= n^{2/3} \lambda^2 \sigma^{4} \norm{ \nabla F^*(\x^{(t)}/\sigma)}^2  \\
&\geq n^{2/3}\lambda^2 \sigma^{4} \frac{1}{16n} \\
&= \frac{\sigma^{4}\lambda^2}{16n^{1/3}}.
\end{align*}
To get a lower bound for an $\varepsilon$ precision requirement we can choose $\sigma$ to be
$$
\frac{\sigma^{2}\lambda}{4 n^{1/6}}  = \epsilon \iff \sigma = \left(\frac{4\varepsilon \hat{\ell}_2 n^{1/6}}{L_2}\right)^{1/2}.
$$
Next, we will guarantee the optimality gap requirement.
We have
\begin{align*}
		F(\mathbf{0}) - \inf_{\x \in \R^d} F(\x)
		&\leq \sqrt[3]{n}\lambda \sigma^3 \left[ \frac{1}{n}\sum_{i=1}^n \hat{f}_{K;\B_i} \left(\frac{\C_i^T\mathbf{0}}{\sigma} \right) - \frac{1}{n}\sum_{i=1}^n \inf_{\x \in \R^d} \hat{f}_{K;\B_i} \left(\frac{\C_i^T\x}{\sigma} \right) \right]  \\
		&\leq \sqrt[3]{n}\lambda \sigma^3 \frac{1}{n}\sum_{i=1}^n\left[  \hat{f}_{K;\B_i} \left(\mathbf{0} \right) -  \inf_{\y \in \R^{d/n}} \hat{f}_{K;\B_i} \left(\y \right) \right]  \\
		&\leq 12\sqrt[3]{n}\lambda \sigma^3K,
\end{align*}
where the last step uses Lemma \ref{lemma:carmon:hat:properties} i).
We require 
$$
	12\sqrt[3]{n}\lambda \sigma^3K = 12\sqrt[3]{n} \frac{L_2}{\hat{\ell}_2} \left(\frac{4\varepsilon \hat{\ell}_2 n^{1/6}}{L_2}\right)^{3/2} K = 96n^{7/12}\left(\frac{\hat{\ell}_2}{L_2}\right) ^{1/2} \varepsilon^{3/2} K \leq \Delta.
$$
Our bounds get better with larger values of $K$, so we want to choose $K$ as
$$
	K = \left \lfloor  \frac{\Delta}{96 n^{7/12}} \left(\frac{L_2}{\hat{\ell}_2}\right)^{1/2} \frac{1}{\epsilon^{3/2}}\right \rfloor.
$$
We need $K\geq 1$ to have a sensible bound as becomes apparent below, and so we require
$$
	\tilde{c}\Delta L_2^{1/2}\frac{1}{\epsilon^{3/2}} \geq n^{7/12},
$$
or more concisely
$$
	 n \leq \frac{c\Delta^{12/7} L_2^{6/7}}{\epsilon^{18/7}},
$$
for some universal constants $c,\tilde{c}$.
As Lemma~\lemmagradbound{} yields the lower bound $T = \frac{nK}{2}$, we get a lower bound of
$$
	\Omega\left( \left(\frac{L_2}{\hat{\ell}_2}\right)^{1/2} \frac{\Delta n^{5/12}}{\epsilon^{3/2}} \right)
$$
with probability  at least $1/2$ for large enough dimension $d$ (see below). Thus there must be a fixed function $F$ such that for this many iterations -- with probability $1/2$ depending only on $\xi$ -- the iterates $\algo$ produces on $F$ all have gradient larger than $\varepsilon$. This means that 
$$ {T}_\epsilon (\algo, F) \geq \Omega\left( \frac{\sqrt{L_2}\Delta n^{5/12}}{\epsilon^{3/2}} \right).$$

For the requirement on the dimension $d$ for the bound from Lemma~\lemmagradbound{} to hold, we can plug in our values of $K$ and $\delta=1/2$ to see that some $d \in \tilde{\upp}(n^{2}\Delta L_2 \varepsilon^{-3})$ suffices. This concludes the proof.
\end{proof}

\section{Shared technical lemma}

We need the following result to guarantee the smoothness of our constructions.
\begin{lemma}
\label{lemma:additional:tensorineq}
Assume $m_1 \geq m_2$. Let $f : \R^{m_2} \rightarrow \R$ and for $\C \in \ortho(m_1,m_2)$ let $g : \R^{m_1} \rightarrow \R^{m_2}, \, \x \mapsto \C^T\x$. We will show that for any $\x,\y \in \R^{m_1}$:
\begin{equation*}
	\norm{\nabla^p[f(\C^T\x)] -\nabla^p[f(\C^T\y)]} 
	\leq \norm{\tilde{\nabla}^p f(\C^T\x) -\tilde{\nabla}^p f(\C^T\y)},
\end{equation*}
where the gradient operator $\nabla$ is with respect to $\x$ while $\tilde{\nabla}$ is with respect to $g(\x) = \C^T\x$. Further, if $f$ is $p$th-order smooth with constant $L$, then for any $\sigma > 0$
\begin{equation*}
	\norm{\nabla^p[\sigma^{p+1} f(\C^T\x/\sigma)] -\nabla^p[\sigma^{p+1}f(\C^T\y/\sigma )]} \leq L\norm{\C^T(\x - \y)}.
\end{equation*}
\end{lemma}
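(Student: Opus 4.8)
The plan is to reduce both claims to a single chain-rule identity for composing $f$ with the linear map $\x\mapsto \C^T\x$, together with the elementary fact that $\C^T$ is a contraction. Write $h(\x)=f(\C^T\x)$. Since $\x\mapsto\C^T\x$ is linear, its Jacobian is the constant matrix $\C^T$ (entrywise $\partial_i(\C^T\x)_k=\C_{ik}$) and all higher derivatives of this map vanish, so iterating the chain rule $p$ times gives, for every multi-index $(i_1,\dots,i_p)$,
$$
\partial_{i_1}\!\cdots\partial_{i_p} h(\x)\;=\;\sum_{k_1,\dots,k_p}\bigl(\tilde{\partial}_{k_1}\!\cdots\tilde{\partial}_{k_p} f\bigr)(\C^T\x)\,\C_{i_1k_1}\cdots\C_{i_pk_p},
$$
which in tensor notation reads $\nabla^p h(\x)[\v_1,\dots,\v_p]=\tilde{\nabla}^p f(\C^T\x)[\C^T\v_1,\dots,\C^T\v_p]$ for all $\v_1,\dots,\v_p\in\R^{m_1}$. (If one prefers, this is proven cleanly by induction on $p$.) Subtracting this identity evaluated at $\x$ and at $\y$, the difference tensor $\nabla^p h(\x)-\nabla^p h(\y)$ is precisely the tensor $D:=\tilde{\nabla}^p f(\C^T\x)-\tilde{\nabla}^p f(\C^T\y)$ with each of its $p$ slots precomposed by $\C^T$.

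For the first inequality I would then unfold the definition of the tensor operator norm: $\norm{\nabla^p h(\x)-\nabla^p h(\y)}=\sup_{\norm{\v_j}\le 1}\abs{D[\C^T\v_1,\dots,\C^T\v_p]}$. Because $\C$ has orthonormal columns, $\C\C^T$ is an orthogonal projection, so $\norm{\C^T\v}^2=\v^T\C\C^T\v\le\norm{\v}^2$; hence each $\C^T\v_j$ is a vector of norm at most $1$, and the supremum above is at most $\sup_{\norm{\mathbf{w}_j}\le1}\abs{D[\mathbf{w}_1,\dots,\mathbf{w}_p]}=\norm{D}$. This is exactly $\norm{\nabla^p[f(\C^T\x)]-\nabla^p[f(\C^T\y)]}\le\norm{\tilde{\nabla}^p f(\C^T\x)-\tilde{\nabla}^p f(\C^T\y)}$.

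For the second inequality I would apply the first part to the rescaled function $\tilde f(\z):=\sigma^{p+1}f(\z/\sigma)$, whose composition with $\C^T$ is $\x\mapsto\sigma^{p+1}f(\C^T\x/\sigma)$. Part one gives $\norm{\nabla^p[\sigma^{p+1}f(\C^T\x/\sigma)]-\nabla^p[\sigma^{p+1}f(\C^T\y/\sigma)]}\le\norm{\tilde{\nabla}^p\tilde f(\C^T\x)-\tilde{\nabla}^p\tilde f(\C^T\y)}$, while homogeneity of differentiation gives $\tilde{\nabla}^p\tilde f(\z)=\sigma^{p+1-p}\tilde{\nabla}^p f(\z/\sigma)=\sigma\,\tilde{\nabla}^p f(\z/\sigma)$. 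Therefore the right-hand side equals $\sigma\,\norm{\tilde{\nabla}^p f(\C^T\x/\sigma)-\tilde{\nabla}^p f(\C^T\y/\sigma)}$, and invoking the $p$th-order $L$-smoothness of $f$ at the points $\C^T\x/\sigma$ and $\C^T\y/\sigma$ bounds this by $\sigma L\,\norm{\C^T\x/\sigma-\C^T\y/\sigma}=L\,\norm{\C^T(\x-\y)}$, as claimed.

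I expect the only mildly delicate point to be the bookkeeping of the higher-order chain rule through a linear map and tracking the matching powers of $\sigma$; everything else is immediate from $\C^T$ being a contraction (indeed $\C\C^T$ an orthogonal projection) and from the definition of the tensor operator norm. No regularity beyond the assumed $p$-fold continuous differentiability of $f$ is used, and the hypothesis $m_1\ge m_2$ enters only to guarantee that a matrix $\C$ with orthonormal columns exists.
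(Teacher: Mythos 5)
Your proposal is correct and follows essentially the same route as the paper: derive the identity $\nabla^p h(\x)[\v_1,\dots,\v_p]=\tilde{\nabla}^p f(\C^T\x)[\C^T\v_1,\dots,\C^T\v_p]$ from the linearity of $\x\mapsto\C^T\x$ (the paper spells this out via the Faà di Bruno partition sum, but only the singleton partition survives, giving the same formula), then use $\norm{\C^T\v}\le\norm{\v}$ and the definition of the tensor operator norm. Your treatment of the second inequality is a bit more explicit than the paper's one-line remark (``follows from $p$ applications of the chain rule'') but amounts to the same homogeneity-of-differentiation calculation, so no substantive difference.
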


\begin{proof}{of Lemma \ref{lemma:additional:tensorineq}}
We are interested in the tensor $\nabla^p [f(\C^T\x)]$. 
Fix indices $i_1,\ldots,i_p$ and let $\Xi$ be the set of partitions of $[p]$. For a set $S \subset [p]$ let $i_S = \{i_j \,\vert \, j \in S \}$.  Define $\nabla^{\abs{S}}_{i_S}$ to be the order $\abs{S}$ partial derivative operator with respect to the coordinates with indices in $i_S$. Applying the higher-order chain rule we obtain
\begin{align*}
\nabla^p_{i_1,...,i_p}[f(\C^T\x)] = \sum_{(S_1,\ldots,S_L) \in \Xi}\sum_{j_1,\ldots,j_L = 1}^{m_2} \left( \prod_{l=1}^L \nabla_{i_{S_l}}^{\abs{S_l}} g_{j_l}(\x)\right) \tilde{\nabla}_{j_1,\ldots,j_L}^L f(\C^T\x).
\end{align*}
Now we use that $g_{j_l}$'s second and higher-order derivatives are zero, and that $\nabla_i g_{j_l}(\x) = \nabla_i [ \langle \cc_{j_l}, \x \rangle ] = c_{i,j_l}$. This means that in the above sum, the only partition that matters has $L=p$ and $\abs{S_1},\ldots, \abs{S_p} = 1$. W.l.o.g. we may take $S_l = \{l\}$ and consequently $i_{S_l} = \{i_l\}$. Then our expression simplifies to 
\begin{align*}
\nabla^p_{i_1,...,i_p}[f(\C^T\x)] &= \sum_{(S_1,\ldots,S_L) \in \Xi}\sum_{j_1,\ldots,j_L = 1}^{m_2} \left( \prod_{l=1}^L \nabla_{i_{S_l}}^{\abs{S_l}} g_{j_l}(\x)\right) \tilde{\nabla}_{j_1,\ldots,j_L}^L f(\C^T\x) \\
&= \sum_{j_1,\ldots,j_p = 1}^{m_2} \left( \prod_{l=1}^p \nabla_{i_{l}}g_{j_l}(\x)\right) \tilde{\nabla}_{j_1,\ldots,j_p}^p f(\C^T\x) \\
&= \sum_{j_1,\ldots,j_p = 1}^{m_2} \left( \prod_{l=1}^p c_{i_l,j_l}\right) \tilde{\nabla}_{j_1,\ldots,j_p}^p f(\C^T\x).
\end{align*}
We now bound the tensor operator norm from the Lemma statement: let $\vvv^{(1)},...,\vvv^{(p)} \in \R^{m_1}$ be arbitrary unit vectors. Then we have
\begin{align*}
&\left \langle \nabla^p[f(\C^T\x)] -\nabla^p[f(\C^T\y)], \, \vvv^{(1)} \otimes \cdots \otimes \vvv^{(p)} \right\rangle \\
= &\sum_{i_1,\ldots,i_p=1}^{m_1} v^{(1)}_{i_1} \cdots v^{(p)}_{i_p} \sum_{j_1,\ldots,j_p = 1}^{m_2} \left( \prod_{l=1}^p c_{i_l,j_l}\right) \tilde{\nabla}_{j_1,\ldots,j_p}^p (f(\C^T\x)-f(\C^T\y))\\
= &\sum_{j_1,\ldots,j_p = 1}^{m_2} \sum_{i_1,\ldots,i_p=1}^{m_1} v^{(1)}_{i_1} \cdots v^{(p)}_{i_p} \left( \prod_{l=1}^p c_{i_l,j_l}\right) \tilde{\nabla}_{j_1,\ldots,j_p}^p (f(\C^T\x)-f(\C^T\y))\\
= &\sum_{j_1,\ldots,j_p = 1}^{m_2} \sum_{i_1,\ldots,i_p=1}^{m_1} \left( \prod_{l=1}^p v^{(l)}_{i_l} c_{i_l,j_l}\right) 
\tilde{\nabla}_{j_1,\ldots,j_p}^p (f(\C^T\x)-f(\C^T\y))\\
= &\sum_{j_1,\ldots,j_p = 1}^{m_2} \left( \prod_{l=1}^p\left(  \sum_{i_l=1}^{m_1}  v^{(l)}_{i_l} c_{i_l,j_l}\right)\right)
\tilde{\nabla}_{j_1,\ldots,j_p}^p (f(\C^T\x)-f(\C^T\y))\\
= &\sum_{j_1,\ldots,j_p = 1}^{m_2} \left(\left(  \langle \vvv^{(1)}, \cc_{j_1} \rangle \right)\cdots \left(  \langle \vvv^{(p)}, \cc_{j_p} \rangle \right)\right) \tilde{\nabla}_{j_1,\ldots,j_p}^p (f(\C^T\x)-f(\C^T\y))\\
= &\sum_{j_1,\ldots,j_p = 1}^{m_2} \left( \left(\C^T \vvv^{(1)} \right)_{j_1}\cdots \left(\C^T \vvv^{(p)} \right)_{j_p} \right) 
\tilde{\nabla}_{j_1,\ldots,j_p}^p (f(\C^T\x)-f(\C^T\y)) \\
=& \left \langle \tilde{\nabla}^pf(\C^T\x)-\tilde{\nabla}^pf(\C^T\y), \, \C^T\vvv^{(1)} \otimes \cdots \otimes \C^T\vvv^{(p)} \right\rangle \\
\leq & \, \, \norm{\tilde{\nabla}^pf(\C^T\x)-\tilde{\nabla}^pf(\C^T\y)}.
\end{align*}
The first statement follows because $\C$ has orthonormal columns and can be extended to an $\R^{m_1\times m_1}$ matrix $\tilde{\C}$. Then $\norm{\C^T\vvv^{(k)}} \leq \norm{\tilde{\C}^T\vvv^{(k)} } = \norm{\vvv^{(k)}} = 1$ for all $k \in [p]$, which justifies the application of the operator norm definition. Because $\vvv^{(1)},...,\vvv^{(p)}$ were arbitrary, we obtain the desired inequality.

The second statement follows from $p$ applications of the chain rule.
\end{proof}

\end{document}